\newtheorem{theorem}{Theorem}[section]
\newtheorem{lemma}[theorem]{Lemma}
\newtheorem{hypothesis}[theorem]{Hypothesis}
\newtheorem{condition}[theorem]{Condition}
\newtheorem{proposition}[theorem]{Proposition}
\newtheorem{definition}[theorem]{Definition}
\newtheorem{corollary}[theorem]{Corollary}
\newtheorem{conjecture}[theorem]{Conjecture}
\theoremstyle{remark}
\newtheorem{remark}[theorem]{Remark}
\newtheorem{example}[theorem]{Example}
\numberwithin{equation}{section}
\newcommand{\pair}[2]{\langle#1,#2\rangle}
\newcommand{\floor}[1]{\lfloor #1\rfloor}
\newcommand{\mhp}{\vskip -.3cm}
\newcommand{\qp}{\vskip .12cm}
\newcommand{\hp}{\vskip .3cm}
\newcommand{\p}{\vskip .5cm}
\newcommand{\Sym}{\text{Sym}}
\newcommand{\Z}{\mathbb{Z}}
\newcommand{\Q}{\mathbb{Q}}
\newcommand{\Ql}{\mathbb{Q}_{\ell}}
\newcommand{\CO}{\mathcal{O}}
\newcommand{\til}{\tilde}
\newcommand{\bsl}{\backslash}
\newcommand{\ra}{\rightarrow}
\newcommand{\xra}{\xrightarrow}
\newcommand{\inn}{\langle\cdot,\cdot\rangle}
\newcommand{\bx}{\mathbf{x}}
\newcommand{\se}{\mathsf{e}}
\newcommand{\bH}{\mathbf{H}}
\newcommand{\tS}{\tilde{S}}
\newcommand{\tB}{\tilde{B}}
\newcommand{\tG}{\tilde{G}}
\newcommand{\Stab}{\text{Stab}}
\newcommand{\Ad}{\text{Ad}}
\newcommand{\ad}{\text{ad}}
\newcommand{\op}{\text{opp}}
\newcommand{\tLg}{\tilde{\mathfrak{g}}}
\newcommand{\Lg}{\mathfrak{g}}
\newcommand{\Lb}{\mathfrak{b}}
\newcommand{\Gm}{\mathbb{G}_m}
\begin{document}
\title{A formula for certain Shalika germs of ramified unitary groups}
\date{\today}
\author{Cheng-Chiang Tsai}
\begin{abstract}
In this article, for nilpotent orbits of ramified quasi-split unitary groups with two Jordan blocks, we give closed formulas for their Shalika germs at certain equi-valued elements with half-integral depth previously studied by Hales \cite{Hales}. These elements are parametrized by hyperelliptic curves defined over the residue field, and the numbers we obtain can be expressed in terms of Frobenius eigenvalues on the $\ell$-adic $H^1$ of the curves, generalizing previous result of Hales on stable subregular Shalika germs. These Shalika germ formulas imply new results on stability and endoscopic transfer of nilpotent orbital integrals of ramified unitary groups. We mention also how the same numbers appear in the local character expansion of specific supercuspidal representations and consequently dimensions of degenerate Whittaker models.
\end{abstract}
\maketitle 

\tableofcontents

\section{Introduction}
We begin by introducing the unitary groups, related algebraic groups, Lie algebras and representations, and then the notion of Shalika germs. After that we can state our Shalika germ formulas, and describe its applications.\p

Let $F$ be a non-archimedean local field and $k$ its residue field. We fix an algebraic closure $\bar{k}$ of $k$. We assume $\text{char}(k)\not=2$. Let $E$ be a ramified quadratic extension over $F$. Note $E/F$ is tame. Fix in this article a uniformizer $\pi\in F$ whose square root $\pi^{1/2}\in E$. Let $n\ge 1$ be an integer and let $\tG=U_n(E/F)$ be the quasi-split unitary group of $n$ variables over $F$ which splits over $E$. We also assume either $\text{char}(F)=0$ or $\text{char}(F)>n$.\p

The reason for the notation $\tG$ is that we prefer to, just like Bruhat-Tits and in geometric Langlands, think of reductive groups over $F$ intuitively as an ind-pro-scheme over the residue field $k$. For this reason, in this article everything - groups, Lie algebras and their elements - that lives over $F$ will have its notation with a tilde $\widetilde{\;\;\;}$.\p

Fix a vertex $\bx$ on the Bruhat-Tits building of $\tG$ over $F$ whose reductive quotient is $\text{SO}_n(k)$. The vertex $\bx$ becomes hyperspecial after base change to $E$. The reductive quotient at $\bx$ over $E$ is (the $k$-points of) $G:=\text{GL}_n$. The root system of $G$ is in canonical bijection with the root system of $\tG/_E$, and we can choose compatible pinnings for $G$ and $\tG/_E$. The non-trivial element in $\text{Gal}(E/F)$ then provides an involution $\theta$ on $G$ such that the reductive quotient at $\bx$ over $F$ is $(G^{\theta})^o(k)\cong \text{SO}_n(k)$. A detailed and general construction of this is described in \cite[Sec. 4]{RY}.\p

Write $\tLg=\text{Lie }\tG$ and $\Lg=\text{Lie }G$. The involution $\theta$ also acts on $\Lg$. We'll write $G(0)=(G^{\theta})^o\cong\text{SO}_n/_k$, $\Lg(0)=\Lg^{\theta=1}$, and $\Lg(1)=\Lg^{\theta=-1}$. This provides a $\Z/2$-grading on $\Lg$. Write $V$ for the $n$-dimensional standard representation of $G(0)$ and $\Lg(0)$. We have $\Lg(1)\cong\Sym^2(V)$ as $G(0)$-representations. The Moy-Prasad filtration at $\bx$ jumps at half-integral numbers, and satisfies $\tG(F)_{\bx,0}/\tG(F)_{\bx,1/2}\cong G(0)(k)$, $\tLg(F)_{\bx,d/2}/\tLg(F)_{\bx,(d+1)/2}\cong\Lg(d)(k),\;\forall d\in\Z$, where the latter isomorphism is as a $G(0)(k)$-representation.\p 

Write $\Lg(1)^{rs}:=\Lg^{rs}\cap\Lg(1)$ where $\Lg^{rs}$ is the open subset of regular semisimple elements in the Lie algebra. Fix from now on a $T\in\Lg(1)^{rs}(k)$. We can see $T$ as a self-adjoint endomorphism on $V$. The monic characteristic polynomial $p_T$ is a separable polynomial of degree $n$. Consequently $C_T:=(y^2=p_T(x))$ is a hyperelliptic curve with genus $g=\floor{\frac{n-1}{2}}$. In fact, the representation $G(0)\curvearrowright\Lg(1)$ was first considered by Bhargava-Gross \cite{BG} for the study of arithmetic statistics about these hyperelliptic curves.\p

Consider the quotient map $\tLg(F)_{\bx,-1/2}\twoheadrightarrow\Lg(1)(k)$. Let $\til{T}\in\tLg(F)_{\bx,-1/2}$ be any lift. Such a $\til{T}$ is always regular semisimple and elliptic, i.e. $\Stab_{\tG}(\til{T})$ is an anisotropic torus over $F$. In fact, it's even anisotropic over $F^{ur}$. This implies that the orbits in the stable orbit of $\til{T}$ enjoys a bijection with the orbits in the stable orbit of $T$; see Lemma \ref{Galois}. (The notion of stable orbit is reviewed in Section \ref{term}.)\p

Denote by $\CO(0)$ the set of nilpotent orbits in $\tLg(F)$, and $J(\til{X},f)$ the orbital integral of $f$ on the orbit of $\til{X}\in\tLg$. We will often identify an element in $\tLg(F)$ with its orbit when talking about orbital integrals and Shalika germs. The theorem of Shalika \cite{Sh} asserts, for $\text{char}(F)=0$ or $\text{char}(F)\gg0$, the existence of constants, the {\bf Shalika germs} $\Gamma_{\CO}(\til{T})\in\Q$ such that

\mhp
\begin{equation}\label{Shalika}
J(\til{T},f)=\sum_{\CO\in\CO(0)}\Gamma_{\CO}(\til{T})J(\CO,f),
\end{equation}

for any compactly supported functions $f$ on $\tLg(F)$ that are locally constant by a sufficiently large lattice.\p

In this article, we prove the following theorem for Shalika germs of nilpotent elements $\til{N}_m\in\tLg(F)$, $0\le m\le g$ with two Jordan blocks of sizes $n-m$ and $m$. Denote by $q:=\#k$. Let $\lambda_1,\lambda_1',...,\lambda_g,\lambda_g'$ be Frobenius eigenvalues on $H^1(C_T/_{\bar{k}},\Ql)$, ordered so that $\lambda_i\lambda_i'=q$. Also write $\lambda_0=1$, $\lambda_0'=q$. Let $I=\{1,...,g\}$ if $n=2g+1$ and $I=\{0,1,...,g\}$ if $n=2g+2$. Write
\[a_m(T):=(-1)^m\cdot\!\!\sum_{S\subset I,|S|=m}\left(\prod_{i\in S}(\lambda_i+\lambda_i')\right).\]\hp

\begin{theorem}\label{intromain}(Theorem \ref{main} and \ref{even})
For $0\le m\le g$, we have $\Gamma_{\til{N}_m}^{st}(\til{T})=\pm a_m(T)$, where $\Gamma_{\CO}^{st}(\til{T})$ is the sum of $\Gamma_{\CO}(\til{T}')$ for $\til{T}'$ running over different orbits in the stable orbit of $\til{T}$.
\end{theorem}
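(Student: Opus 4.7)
The plan is to compute each Shalika germ $\Gamma_{\tilde{N}_m}^{st}(\tilde{T})$ by realizing it as a limit of orbital integrals of carefully chosen test functions, reinterpreting those orbital integrals geometrically through the Bhargava--Gross perspective on the representation $(G(0),\Lg(1)) = (\text{SO}_n, \Sym^2 V)$, and reading off the answer from the $\ell$-adic cohomology of $C_T$. Concretely, I would exploit the equi-valued depth $-1/2$ structure of $\tilde{T}$: by Moy--Prasad / DeBacker homogeneity, the germs $\Gamma_{\tilde{N}_m}(\tilde{T})$ are detected by orbital integrals $J(\tilde{T}, f_m)$ for test functions $f_m$ supported on suitable Moy--Prasad neighborhoods of $\tilde{N}_m$. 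Inverting the Shalika expansion \eqref{Shalika} against this family of $f_m$ reduces the computation of $\Gamma_{\tilde{N}_m}^{st}(\tilde{T})$ to counting $k$-points of a stratum of the reduced affine Springer fiber attached to $\tilde{T}$, and the stability passage converts this into a clean $k$-point count on a subvariety $Y_m$ of the Bhargava--Gross moduli attached to $T$.

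For the geometric identification, the characteristic polynomial map $\Sym^2 V \to \mathbb{A}^n/W$ has spectral cover $C_T$ over its base, and $\text{SO}_n$-orbits of self-adjoint operators with characteristic polynomial $p_T$ are parametrized by a Prym-type quotient of $\text{Pic}(C_T)$ under the hyperelliptic involution. I expect the stratum $Y_m$ to be (closely related to) the symmetric product $\Sym^m(C_T)$, with the partition $(n-m, m)$ recording the degree of degeneration of the spectral data. Applying Grothendieck--Lefschetz together with the Macdonald-style formula expressing $H^*(\Sym^m C_T)$ through exterior powers of $H^1(C_T,\Ql)$, the point count $\#Y_m(k)$ breaks into pieces of $H^*(C_T, \Ql)$, and extracting the appropriate weight-$m$ piece should yield exactly $(-1)^m \sum_{|S|=m}\prod_{i\in S}(\lambda_i+\lambda_i')$. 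The extra index $0 \in I$ in the even case $n = 2g+2$ naturally accounts for the two points at infinity of $C_T$, whose Frobenius eigenvalues give the pair $(\lambda_0, \lambda_0') = (1, q)$.

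The principal obstacle will be the geometric identification step: pinpointing which stratum $Y_m$ corresponds to the nilpotent orbit with Jordan blocks $(n-m, m)$ under the Moy--Prasad-to-Vinberg dictionary, and verifying that the sum over the stable orbit of $\tilde{T}$ on the group side cleanly realizes the $k$-points of $Y_m$ with no residual twist by a character. A secondary difficulty is the separate treatment of the odd $(n = 2g+1)$ and even $(n = 2g+2)$ cases, where the geometry of $C_T$ at infinity differs. The overall sign $\pm$ can then be pinned down by boundary values: $m = 0$ should give $a_0(T) = 1$, matching the known regular-nilpotent germ at an elliptic element, while $m = g$ should recover Hales' stable subregular germ formula from \cite{Hales}, together fixing the sign uniformly across $m$.
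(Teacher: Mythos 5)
Your outline follows the same overall strategy as the paper (DeBacker homogeneity, test functions $f_m$ adapted to Moy--Prasad neighborhoods of $\til{N}_m$, point counts on Hessenberg-type strata, Grothendieck--Lefschetz on symmetric powers of $C_T$), but as it stands it has two genuine gaps where the actual mathematical content lives. First, you treat ``inverting the Shalika expansion against the family $f_m$'' as formal. It is not: for a fixed $m$, every orbit $\til{N}_{m'}$ with $m'\le m$ contributes to $J^{st}(\til{T},f_m)$, so to extract $\Gamma^{st}_{\til{N}_m}(\til{T})$ you must compute all the nilpotent orbital integrals $J(\til{N}_{m'},f_m)$ and show the remaining orbits give zero. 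In the paper this is Proposition \ref{combprop}, proved via Ranga Rao's formula, reduction to a sum over the Weyl group of $G(0)$ (Condition \ref{cond}, Lemma \ref{Sg}), and the Catalan-number identities of the appendix; only then does the matching with Proposition \ref{geomprop} yield the germs. Your proposal does not even flag this step as a difficulty. Second, the identification of the stratum $Y_m$ with (the image of) $\Sym^m(C_T)$ is exactly the hard geometric input: it is Theorem \ref{geom} (and \ref{geom2} in the even case), which requires the full theory of pencils of quadrics of X.~Wang, the analysis of $m$-good/$m$-excellent/$m$-exact flags, and the inductive degeneration argument; likewise, your hope that the stable average produces a clean count ``with no residual twist'' is precisely the content of Lemma \ref{matrix} and Lemma \ref{Galois}, which use the $J_T[2]$-torsor structure on $(\times 2)^{-1}(\infty)$ and the matching of rational orbits via $H^1(k,J_T[2])$. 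Acknowledging these as obstacles does not discharge them; without them the argument does not compute anything.

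A further concrete error: your plan to pin the sign by boundary values misidentifies the subregular case. Hales' stable subregular germ corresponds to $m=1$ (Jordan blocks $(n-1,1)$), not $m=g$, and in any case the $\pm$ in the statement is not a uniform normalization to be fixed by boundary matching: the paper shows $\Gamma^{st}_{\til{N}_m}(\til{T})=a_m(T)$ for one rational orbit in the stable nilpotent orbit and $(-1)^m a_m(T)$ for the other (Theorem \ref{main}), the difference arising from replacing $C_T$ by its quadratic twist in the point count. So the sign is computed orbit by orbit, not deduced from the $m=0$ and $m=1$ cases. Finally, in the even case $n=2g+2$ the two points at infinity do more than contribute the pair $(1,q)$: they force the refined bookkeeping of rulings, the maps $j_m^{(0)},j_m^{(1)},j_m^{(2)}$, and the modified notion of $m$-exactness in Theorem \ref{geom2}, none of which is visible in your sketch.
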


See the theorems for the sign and see Appendix \ref{norm} for the normalization. When $m=0$, $\til{N}_0$ is a regular orbit and $a_0(T)=1$ which is well-known. When $m=1$ it's a subregular orbit, and the result was proven by Hales \cite{Hales}. He also gave parallel results for other classical groups. Our result probably brings the suggestion that general Shalika germs, after all, could have reasonably nice closed formulas.\p

The starting point of the proof of Theorem \ref{intromain} is to find a particular sequence of test functions for (\ref{Shalika}) supported on $\tLg(F)_{\bx,-1/2}$. These functions are made available by the homogeneity result of DeBacker (special case by Waldspurger) \cite{De}. The description of these functions will be given in the beginning of Section \ref{seccompute}. For these test functions, the LHS of (\ref{Shalika}) counts $k$-points on a sequence of specific varieties. It turns out that the theory of pencils of quadrics by X. Wang \cite{Jerry} can be used to relate these varieties to $\Sym^m(C_T)$, the $m$-th symmetric power of the hyperelliptic curve $C_T$. They then in terms give the numbers $a_m(T)$ above. This is the main concept in Section \ref{secgeom}. Briefly speaking, the phenomenon is that in Goresky-Kottwitz-MacPherson \cite{GKM} we know certain orbital integrals can be understood as counting points on Hessenberg varieties (their definition of Hessenberg varieties are more general than others', see \cite[1.5]{GKM}). The situation for obtaining Shalika germs could be slightly more involved and we end up with counting points on quasi-finite covers of Hessenberg varieties (see \cite{Ts}). Our quasi-finite covers of Hessenberg varieties then happens to be strongly related to varieties considered by Wang.\p

Section \ref{seccompute} contains most of the computation. We begin with the case of odd ramified unitary groups. In subsection \ref{secss} we read out the varieties that appear in the LHS of (\ref{Shalika}) for our test functions and apply the geometric result in Section \ref{secgeom}. Next in subsection \ref{secnil} we use Ranga Rao's method to compute nilpotent orbital integrals. With our simple-looking test functions thanks to homogeneity result of DeBacker, our computation reduces to a combinatorial sum over the Weyl group of $G(0)$. In subsection \ref{submore} we state results regarding Shalika germs (instead of stable Shalika germs), as well as the results for even quasi-split ramified unitary groups.\p

A consequence of Theorem \ref{intromain} is result regarding stable distributions supported on the nilpotent cone (i.e. linear combination of nilpotent orbital integrals) and endoscopic transfer of nilpotent orbital integrals of ramified quasi-split unitary groups. This is the main content of Section \ref{secendo}. The basic idea is that Shalika germs are the coefficients comparing regular semisimple orbital integrals and nilpotent orbital integrals. Once we know these coefficients, we are able to derive, from the very definition of stability and endoscopic transfer of regular semisimple orbital integrals, corresponding results of nilpotent ones.\p

The relevant elliptic endoscopic data are $U_{n_1}(E/F)\times U_{n_2}(E/F)$ with $n_1+n_2=n$ as endoscopy groups of $U_n(E/F)$. Assuming some conjectures of Assem (Conjecture \ref{Assem} and \ref{Assem2}), our result for nilpotent orbits with two Jordan blocks agrees with previous results of Waldspurger \cite{Wald} for unramified unitary groups. This also provides another evidence for Assem's conjectures. In fact, it was this connection to endoscopic transfer which led us into believing the formula in Theorem \ref{intromain} in the first place (see Remark \ref{rmkconj}).\p

In addition, in section \ref{charsc} we describe how those Shalika germs we compute show up in the Harish-Chandra-Howe local character expansions for some supercuspidal representations. Since {M\oe glin} and Waldspurger \cite{MW} showed that the coefficients in the loacl character expansions are related to the dimension of certain degenerate Whittaker models, we can produce examples where the dimension of degenerate Whittaker model are given by counting points on some ``non-elementary'' varieties.\p


\hp
\subsection*{Acknowledgments}It's the very pleasure of the author to thank his advisor Benedict Gross for his suggestion on studying this problem and for his stimulating ideas and guidance. He will also like to express his gratitude to Xiaoheng Jerry Wang, for introducing him to the theory of pencils of quadrics and its applications. Meanwhile he would like to thank Zhiwei Yun, for teaching and sharing with him many brilliant ideas related to Hessenberg varieties. He has also learned a lot from Thomas Hales about many fundamental ideas on Shalika germs, for which he deeply appreciate. Lastly, he would like to express his gratitude to Sam Altschul, Stephen DeBacker, Jessica Fintzen, Bao Le Hung, Fiona Murnaghan, Loren Spice and Jack Thorne for numerous inspiring and helpful discussions.\p

\p
\section{Notations and setup}\label{term} We collect the notations. We have a non-archimedean local field $F$, its residue field $k$, a ramified quadratic extension $E/F$, and a fixed uniformizer $\pi\in F$ such that $\pi^{1/2}\in E$. Also we denote $q:=\#k$. We have $\til{G}=U_n(E/F)$ is a quasi-split unitary group that splits over $E$ (such group is unique). We write $\tLg=\text{Lie }\tG$. When orbital integral on $\tG$ or $\tLg$ is concerned, we always identify an element with its $\tG(F)$-orbit. The assumptions $\text{char}(k)\not=2$ and either $\text{char}(F)=0$ or $\text{char}(F)>n$ are imposed. In fact we'll mostly work with the assumption $\text{char}(k)\gg0$, and leave it to Appendix \ref{char} to explain how we can reduce the assumption on characteristic to those stated above. \p

Write $G=\text{GL}_n/_k$. It has a standard representation $V$, which we equipped with a non-degenerate quadratic form $\inn$. We define an involution $\theta$ on $G$ such that $\theta(h)=(h^t)^{-1}$ for $h\in G$, where $h^t$ is the transpose of $h$ with respect to $\inn$. This induces an involution on $\Lg:=\text{Lie }G$ which we also denote by $\theta$. Let $G(0)=(G^{\theta})^o\cong\text{SO}_n$, $\Lg(0)=\Lg^{\theta=1}=\text{Lie }G(0)$ and $\Lg(1)=\Lg^{\theta=-1}$ the invariant and anti-invariant subspace of $\theta$. There is a vertex $\mathbf{x}$ on the building such that $G$ is the reductive quotient of $\tG/_E$ at $x$ and $G(0)$ the reductive quotient of $\tG/_F$. We fix such a vertex $\mathbf{x}$. Also see subsection \ref{elem} below for a more elementary description of $\tG$, $G$ and $\bx$.\p

Let $\Lg^{rs}\subset\Lg$ be the subset of regular semisimple elements and $\Lg(1)^{rs}=\Lg^{rs}\cap\Lg(1)$. For any $T\in\Lg(1)^{rs}$, the monic characteristic polynomial of $T$ is denoted $p_T(x)$, and $C_T=(y^2=p_T(x))$ is the smooth completion of the hyperelliptic curve defined by $p_T(x)$.\p

Whenever we have a group variety $H$ acting on a space $X$ over some field $K$, by an {\it orbit} (or the orbit of $x\in X(K)$) in $X(K)$ we mean a subset of $X(K)$ of the form $\{h.x\,|\,h\in G(K)\}$, and by a stable orbit (or the stable orbit of $x$) we mean a subset of $X(K)$ of the form $\{h.x\,|\,h\in G(K^{sep})\}\cap X(K)$. The (stable) orbits discussed in this article will be either (stable) orbits in $\tLg(F)$ under the adjoint action of $\tG$, or (stable) orbits in $\Lg(1)(k)$ under the conjugacy action of $G(0)$.\p

The methods for odd ramified unitary groups ($n=2g+1$) and even (quasi-split) ramified unitary group ($n=2g+2$) are largely the same, but most of the computation has to be carried out separately. In most of this article we only treat the odd case in detail, but describe geometric tools needed for even unitary groups and list the results. In particular we will go with $\tG=U_{2g+1}(E/F)$ unless otherwise stated, and notationally reserve $n$ for other variables.\p

\subsection{An elementary description}\label{elem} We give a down-to-earth description of groups $\tG$, $G$, the involution $\theta$ and the vertex $\bx$. Let $\til{V}$ be an $n$-dimensional hermitian space over $E$, spanned by basis vectors $\til{e}_1$, ..., $\til{e}_n$ and equipped with the hermitian form given by $\pair{\sum a_i\til{e}_i}{\sum b_i\til{e}_i}_{herm}=\sum_{i=1}^n a_{n+1-i}b_i^*$, where $a_i,b_i\in E$ and $b_i^*$ is the conjugate of $b_i$ over $F$. Then $\tG$ is such an algebraic group defined over $F$ for which $\tG(F)$ is isomorphic to the group of unitary operators on $\til{V}$, i.e. $E$-linear operators on $\til{V}$ preserving the hermitian form.\p

Let $\Lambda=\text{span}_{\CO_E}\{\til{e}_1,...,\til{e}_n\}$ be a lattice in $\til{V}$. Let $K$ be the subgroup of $\tG(F)$ consisting of unitary operators $g$ with $g(\Lambda)=\Lambda$. Then $K$ stabilizes a unique vertex on the Bruhat-Tits building of $\tG$ over $F$, which (up to conjugation) is the vertex that we call $\bx$. We have the stabilizer group $\tG(F)_{\bx}=K$.\p

The hermitian form $\inn_{herm}$ takes $\CO_E$ values on $\Lambda$. Its reduction mod $\pi^{1/2}$ thus defines a quadratic form $\inn$ on $V:=\Lambda/\pi^{1/2}\Lambda$. Write $e_1,...,e_n$ to be the reduction of $\til{e}_1,...,\til{e}_n$, respectively. Then $\inn$ on $V$ is defined by $\pair{\sum a_ie_i}{\sum b_ie_i}=\sum_{i=1}^n a_{n+1-i}b_i$, where $a_i,b_i\in k$. The algebraic group $G$ then should be identified with the group of automorphisms of $V$ (not necessarily fixing $\inn$); $G(k')=GL(V\otimes_kk')$ for any finite extension $k'/k$, and $\theta\curvearrowright G(k')$ is the involution $\theta(g)=(g^t)^{-1}$ where $g^t$ denotes the transpose of $g$ with respect to the quadratic form $\inn$.\p

The Lie algebra $\tLg(F)$ is the space of anti-hermitian endomorphisms of $\til{V}$, and for any $d\in\frac{1}{2}\Z$, $\tLg(F)_{\bx,d}=\{X\in\tLg(F)\,|\,X(\Lambda)\subset\pi^d\Lambda\}$. We have $\tLg(F)_{\bx,d}/\tLg(F)_{\bx,d+1/2}\cong\Lg(2d)(k)$ given by first scaling $\pi^{-d}$ and then modulo $\pi^{1/2}$. Of course, this map depends on the choice of uniformizer $\pi^{1/2}\in E$. Here recall $\Lg(2d)=\Lg^{\theta=1}$ if $d$ is integral and $\Lg(2d)=\Lg^{\theta=-1}\cong Sym^2(V)$ if $d$ is half-integral but non-integral.\p

The algebraic group $G$ has $\theta$-stable Borel subgroups. For example, one such $B$ is given by that $B(k)$ consists of endomorphisms of $V$ that sends $e_i$ to a linear combination of $e_1$, $e_2$, ..., and $e_i$. We also denote $B(0)=B\cap G(0)=(B^{\theta})^o$. They are used in Section \ref{seccompute}.\p

\p
\section{Geometric result via pencils of quadrics}\label{secgeom}

In this section, $k$ can be any perfect field with $\text{char}(k)\not=2$.\p

\subsection{Odd case}\label{subodd} In this subsection we have $n=2g+1$ and $G=\text{GL}_{2g+1}/_k=\text{GL}(V)$. Recall that the vector space $V$ comes with a non-degenerate quadratic form $\inn$. We then have in the introduction an involution $\theta$ on $G$ which sends $g$ to $(g^t)^{-1}$, where $g^t$ is the adjoint of $g$ with respect to $\inn$. This induces an involution on $\Lg$, and we write $\Lg(0)=\Lg^{\theta=1}$, $\Lg(1)=\Lg^{\theta=-1}$. We have $\Lg(1)\cong\Sym^2(V)$ as $G(0)$-representations. As $\inn$ provides a self-dual structure on $V$, $\Lg(1)\cong\text{End}^{self-adj}(V)$ is also the space of self-adjoint operators on $V$.\p

The representation $G(0)\curvearrowright\Lg(1)$, or equivalently $\text{SO}(V)\curvearrowright\Sym^2(V)$, was considered by Bhargava-Gross in \cite{BG}. An orbit in this representation is GIT-stable iff it's contained in $\Lg(1)^{rs}:=\Lg^{rs}\cap\Lg(1)$ where $\Lg^{rs}$ is the open subset of regular semisimple elements in the Lie algebra. We now fix an $T\in\Lg(1)^{rs}(k)$.\p

Let $p_T(x)$ be the degree $2g+1$ monic characteristic polynomial of $T$.
Let $L=k[x]/p_T(x)$ be a degree $2g+1$ \'{e}tale algebra over $k$. Consider the Weil restriction $\text{Res}_k^L\mu_2$. This is a commutative \'{e}tale finite group scheme over $k$ of order $2^{2g+1}$. It has a surjective norm map $Nm:\text{Res}_k^L\mu_2\ra\mu_2$. Bhargava and Gross observed for $T\in\Lg(1)^{rs}$, we have canonical isomorphism $\Stab_{G(0)}(T)\cong\text{ker}(\text{Res}^{k[x]/p_T(x)}_k\mu_2\xra{Nm}\mu_2)$. In fact, the map $T\mapsto p_T(x)$ is the GIT-quotient map $\Lg(1)\mapsto\Lg(1)/\!/G(0)$; we have $\Lg(1)/\!/G(0)\cong\mathbb{A}^{2g+1}$ is the space of degree $n$ monic polynomials.\p

Let $C_T=(y^2=p_T(x))$ be a (smooth completion of) genus $g$ hyperelliptic curve. Let $J_T=\text{Pic}^0(C_T)$. Since the $2$-torsion $J_T[2]$ is generated by differences of Weierstrass points, one checks $J_T[2]\cong\text{ker}(\text{Res}^{k[x]/p_T(x)}_k\mu_2\xra{Nm}\mu_2)$. Consequently $J_T[2]\cong\Stab_{G(0)}(T)$.\p

If one fix such a $T$, then the orbit of $T$ is $G(0)(k).T$ while the stable orbit of $T$ is $\left(G(0)(\bar{k}).T\right)\cap\Lg(1)(\bar{k})$. There could be more than one orbits inside a stable orbit, and relative to the choice of $T$ as a pinning they can be classified by $H^1(k,\Stab_{G(0)}(T))\ra H^1(k,G(0))$. When $k$ is a finite field, by Lang's theorem, the latter pointed set is trivial, and thus we have $H^1(k,\Stab_{G(0)}(T))\cong H^1(k,J_T[2])$ classifies orbits in the stable orbit of $T$ relative to the choice of a pinning.\p

The GIT-quotient map $\Lg(1)\ra\Lg(1)/\!/G(0)$ has a {\it Kostant section} \cite[Thm 5.5]{Levy}. Using the Kostant section as a pinning, a $G(0)(k)$-orbit in $\Lg(1)^{rs}(k)$ corresponds to a hyperelliptic curve $C_T$ together with a class in $H^1(k,J_T[2])$. For $k$ a global field, Bhargava, Gross and others used this to study the average size of $2$-Selmer groups of such hyperelliptic curves, see e.g. \cite{BG2}. For this purpose, X. Wang developed the theory of pencil of quadrics \cite{Jerry}. It turns out that his theory is very useful in describing the variety that we'll encounter in orbital integrals.\p

Define on $V\oplus k$ two quadratic forms by $\langle (v_1,c_1),(v_2,c_2)\rangle_1=\langle v_1,v_2\rangle$ and $\langle (v_1,c_1),(v_2,c_2)\rangle_2=\langle v_1,Tv_2\rangle-c_1c_2$. This defines a generic pencil of quadrics in the sense of X. Wang \cite[Intro.]{Jerry}. Recall that a subspace $W\subset V\oplus k$ is said to be isotropic with respect to a quadric (e.g. $\inn_1$) if the restriction of the quadratic form to $W$ is trivial. In his paper, Wang proved the following:\p

\begin{theorem}\label{Jerry}(Wang \cite[Thm. 2.26]{Jerry}) Let $F_T$ be the variety that parametrizes $g$-dimensional subspaces of $V\oplus k$ that the are isotropic with respect to both $\inn_1$ and $\inn_2$. Then there is a commutative algebraic group structure on

\mhp
$$G_T:=J_T\sqcup F_T\sqcup\text{Pic}^1(C_T)\sqcup F_T',$$\hp

where $F_T'\cong F_T$ as a variety, the addition law on $J_T\sqcup\text{Pic}^1(C_T)$ agrees with that of $\text{Pic}(C_T)/(2(\infty)=0)$, and $G_T$ has component group equal to $\Z/4$.
\end{theorem}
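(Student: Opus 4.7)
The plan is to construct the group structure on $G_T$ by identifying its components with (torsors over) specific components of a refined Picard group of $C_T$, and to inherit the group law from $\text{Pic}(C_T)$. The core input is the classical correspondence, due to Reid and Donagi, between Fano varieties of linear subspaces in the base locus of a pencil of quadrics and components of the Picard scheme of the associated hyperelliptic curve.

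To build the map identifying $F_T$ with a $J_T$-torsor, I would first note that, viewing a $g$-dimensional isotropic $W \subset V \oplus k$ as a $(g-1)$-plane in $\mathbb{P}^{2g+1}$, for each smooth quadric $Q_t = \langle\cdot,\cdot\rangle_2 - t\langle\cdot,\cdot\rangle_1$ in the pencil $W$ is contained in exactly two maximal isotropic subspaces of $Q_t$, one from each of the two rulings. As $t$ varies, the set of these maximal isotropics is naturally parametrized by $C_T$, the double cover of $\mathbb{P}^1$ branched at the singular locus of the pencil (which consists of the roots of $p_T$ together with $t = \infty$). Following Reid, one associates to $W$ a line bundle $\mathcal{L}_W$ on $C_T$ whose fiber at $(t,y) \in C_T$ records the maximal isotropic of $Q_t$ in the ruling indexed by $y$. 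A degree computation via careful local analysis at the Weierstrass points — in particular at the degenerate quadric $\langle\cdot,\cdot\rangle_1$ whose radical is the $k$-summand — identifies $F_T$ with a specific $J_T$-torsor, and the action of $\Stab_{G(0)}(T) \cong J_T[2]$ on $F_T$ matches its action on $\text{Pic}$ by translation. The component $F_T'$ arises symmetrically from the opposite ruling.

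To construct the group law I would transport the addition on $\text{Pic}(C_T)/\langle 2(\infty)\rangle$ to $J_T \sqcup \text{Pic}^1(C_T)$ as prescribed, and define the mixed sums $F_T \times F_T \to \text{Pic}^1(C_T)$, $F_T \times \text{Pic}^1(C_T) \to F_T'$, etc., using the torsor structure together with $\mathcal{L}_W$. That the component group is cyclic of order $4$ rather than $(\Z/2)^2$ — equivalently, $2[F_T] = [\text{Pic}^1(C_T)]$ and not $[J_T]$ — should reflect the asymmetric distribution of Weierstrass points on $C_T$ (the single branch point at $t = \infty$ paired with the $2g+1$ roots of $p_T$), and is the most delicate piece. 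I expect this to be the main obstacle: while associativity is built into the Picard side, the geometric ``addition'' of two $g$-dimensional isotropic subspaces — defined by intersections with orthogonal complements and the resulting linear-algebraic output — does not visibly reduce to line bundle tensor product without the full degree bookkeeping above. In practice I would verify associativity over a dense open of the parameter space, where all quadrics in the pencil may be simultaneously diagonalized, and then conclude by continuity and the separatedness of $G_T$.
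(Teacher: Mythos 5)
You should first be aware that the paper contains no proof of this statement: Theorem \ref{Jerry} is quoted verbatim from Wang \cite[Thm.\ 2.26]{Jerry}, and the surrounding text only recalls how the group law is characterized by the operation $(p)-[W]:=[W'']$. So your sketch has to stand on its own against Wang's argument, and as written it has genuine gaps rather than just omitted routine detail.

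The central problem is the step ``a degree computation \dots identifies $F_T$ with a specific $J_T$-torsor.'' If this means a $J_T$-equivariant $k$-morphism from $F_T$ to a component $\text{Pic}^d(C_T)$, it cannot exist in general: in the odd case every $\text{Pic}^d(C_T)$ is a trivial $J_T$-torsor because of the rational Weierstrass point $\infty$, whereas $F_T$ is in general a nontrivial ($2$-torsion) torsor --- its possible nontriviality is exactly what makes these varieties useful for $2$-descent in \cite{BG2} and \cite{Jerry}. What a construction of the type $p\mapsto W'_p/W$ can honestly produce over $k$ is a finite covering of a Picard component, in effect the doubling map $\times2:F_T\to\text{Pic}^1(C_T)$; and determining that its target is the odd component $\text{Pic}^1(C_T)$ rather than $J_T$ is precisely the assertion that the component group is $\Z/4$. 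So the ``degree bookkeeping'' you defer, and the heuristic about the asymmetric Weierstrass point at infinity, are not a verification of the delicate point --- they \emph{are} the theorem, and they are not carried out. In addition, the mixed addition maps $F_T\times F_T\to\text{Pic}^1(C_T)$, $F_T\times\text{Pic}^1(C_T)\to F_T'$, etc., are never actually defined, so the plan of checking associativity on a dense open and concluding by continuity and separatedness has no morphisms to apply to. Finally, a smaller slip: $F_T'$ does not arise from ``the opposite ruling'' --- points of $F_T$ are $g$-dimensional, hence non-maximal, isotropic subspaces of the $(2g+2)$-dimensional space $V\oplus k$, so there is no ruling dichotomy for them (that dichotomy concerns the $(g+1)$-dimensional isotropics); $F_T'$ is simply a second copy of $F_T$, namely the component carrying the inverses of the points of the $F_T$-component in the $\Z/4$-graded group.
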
\hp

In particular, $F_T$ is a torsor under $J_T$ and there is a doubling map $\times2:F_T\ra\text{Pic}^1(C_T)$. We review the group structure in the theorem. The group structure is determined by $(p)-[W]$, i.e. how to subtract from $p\in C_T$ a subspace $[W]\in F_T$. This is done as follows: a point $p=(x,y)$ on $C_T$ corresponds to a ruling of $\inn_2-x\inn_1$. Recall that a ruling is a connected component of the variety parameterizing $(g+1)$-dimensional subspace on which the quadratic form is trivial. There will be a unique $(g+1)$-dimensional space $W'$ in the ruling such that $W'\supset W$. Inside the space $W'$ there will be, when counted with multiplicity, two $g$-dimensional subspaces $W$ and $W''$ on which $\inn_1$ and $\inn_2$ vanish. It is then defined $(p)-[W]:=[W'']$, and this uniquely characterizes the group structure on $G_T$.\p 

It's obvious that $G_T$ depends only on $T$ up to $G(0)(k)$-conjugacy. As mentioned in the introduction the orbit of $T$ in its stable orbit may be characterized by a class in $H^1(k,J_T[2])$. This class can be describe as follows: the map $\times2:F_T\ra\text{Pic}^1(C_T)$ is \'{e}tale and Galois with Galois group being $J_T[2]$ as a group scheme over $k$.  There is a distinguished rational Weierstrass point $\infty\in C_T\subset\text{Pic}^1(C_T)$. Then the class is the torsor $(\times2)^{-1}(\infty)$.\p

For any $0\le m\le g$, consider $j_m:\Sym^m(C_T)\ra\text{Pic}^1(C_T)$ by $j_m(p_1,...,p_m)=(p_1)+...+(p_m)-(m-1)(\infty)$. Let $X_{T,m}$ be the image of $j_m$, and let $\til{X}_{T,m}:=(\times 2)^{-1}(X_{T,m})$ be its preimage under the \'{e}tale map $\times2$. We also take $\til{X}_{T,-1}=\emptyset$. We shall relate $\til{X}_{T,m}$ with the following varieties $F_{T,m}$, which could be thought as a generalized version of Hessenberg varieties considered by Goresky, Kottwitz and MacPherson \cite{GKM}.\p

For any finite extension $k'/k$, we call a flag of $k'$-subspaces $0\subset W^1\subset...\subset W^g\subset(V\oplus k)\otimes_k k'$ {\bf good} if\hp

(i)$\;\:\,$ $\dim W^i=i$.

(ii)$\:\:$ The restriction of $\inn_1$ and $\inn_2$ to $W^g$ is zero.

(iii)$\:$ $W^{g-1}\subset V\otimes_k k'$.

(iv)$\,\,$ $T(W^i)\subset W^{i+2}\;\forall 1\le i\le g-3$.

(v)$\;\;$ $T(W^{g-2})\subset\pi_1(W^g)$.\hp

Here $\pi_1:(V\oplus k)\otimes_k k'\ra V\otimes_k k'$ is the projection to the first factor. For $1\le m\le g$, a good flag is called {\bf $m$-good} if $T(W^{g-m})\subset W^{g-m+1}$ (where $W^0=\{0\}$ is understood). Also a good flag is called {\bf $0$-good} if $W^g\subset V\otimes_kk'$. Next, for $0\le m\le g$, an $m$-good flag is called {\bf $m$-excellent} if it is also $n$-good for $m<n\le g$. On the other hand, a good flag is called {\bf $m$-general} if it is not $n$-good for any $0\le n<m$. Finally, a good flag is called {\bf $m$-exact} if it is $m$-excellent and $m$-general. Now let
$$F_{T,m}(k')=\{0\subset W^1\subset...\subset W^g\subset(V\oplus k)\otimes_k k'\,|\,\text{This is an }m\text{-exact flag}\}.$$\p

The functor $F_{T,m}$ is easily seen from its very definition to be represented by a quasi-projective variety over $k$ which we'll denote with the same notation. In fact, there is a projective variety $F_{T,good}$ that parameterize good flags, and $F_{T,m}\subset F_{T,good}$ is locally closed. There is a natural map $\til{j}:F_{T,good}\ra F_T$ by sending a flag to $[W^g]$. This section is devoted to the proof of the following result:\hp

\begin{theorem}\label{geom}
For $0\le m\le g$, the map $\til{j}|_{F_{T,m}}:F_{T,m}\ra F_T$ is a locally closed embedding, with image equal to $\til{X}_{T,m}\,\bsl\,\til{X}_{T,m-1}$. 
\end{theorem}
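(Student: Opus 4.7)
The plan is to establish a scheme-theoretic bijection $F_{T,m} \leftrightarrow \til{X}_{T,m} \setminus \til{X}_{T,m-1}$ compatible with $\til{j}$, by reading off $m$ points of $C_T$ from the data of an $m$-exact flag and then using Wang's group law on $G_T$ (Theorem \ref{Jerry}) to identify the doubling class $2[W^g]$.

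First I would translate the $m$-exact conditions into geometric data. An $m$-exact flag satisfies the single-shift condition $T W^{g-n} \subset W^{g-n+1}$ for $m \le n \le g$ but fails it at $m$ positions $0 \le n < m$ (with $n = 0$ corresponding to $W^g \not\subset V$ when $m \ge 1$). Each such ``jump'' should, via the pencil $Q_x := \langle\cdot,\cdot\rangle_2 - x\langle\cdot,\cdot\rangle_1$, pick out a point $p_j \in C_T$: on the 2-dimensional subquotient where the jump occurs, the action of $T$ together with the induced pairings isolates a ``local eigenvalue'' $x_j$ that is a root of $p_T$, while the compatible extension of $W^g$ to an isotropic $(g+1)$-dimensional subspace visible in the flag selects one of the two rulings of $Q_{x_j}$, hence a sign $y_j$ giving $p_j = (x_j, y_j) \in C_T$.

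Next I would chain these points via the group law. For each $p_j$, Theorem \ref{Jerry} produces a partner $[W_j''] \in F_T$ with $(p_j) = [W^g] + [W_j'']$ in $G_T$. By organizing the $m$ jumps of the flag into a telescoping identity --- with a ``$0$-exact'' base case identifying the $(m-1)(\infty)$ contribution via the degeneration of $Q_x$ at $x = \infty$ corresponding to the rational Weierstrass point at infinity --- I expect to derive $2[W^g] = (p_1) + \ldots + (p_m) - (m-1)(\infty)$ in $\mathrm{Pic}^1(C_T)$. The $m$-generality conditions (strict non-inclusions) translate into the absence of any $p_j = \infty$ or of a hyperelliptic-conjugate pair $p_j, p_{j'} = \sigma(p_j)$ collapsing the divisor, placing $[W^g]$ in the complement $\til{X}_{T,m} \setminus \til{X}_{T,m-1}$. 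For the reverse direction and to verify that $\til{j}|_{F_{T,m}}$ is a locally closed immersion, I would invert the construction: given $[W^g]$ with $2[W^g] = \sum(p_j) - (m-1)(\infty)$ in $X_{T,m} \setminus X_{T,m-1}$, iteratively apply Wang's operation $(p_j) - [\cdot]$ to build the auxiliary $(g+1)$-dimensional isotropic extensions, and recover each $W^i$ as a suitable intersection of these with $V$. Local closedness then follows because $F_{T,m} \subset F_{T,good}$ is cut out by a combination of Zariski-closed conditions (the good-flag conditions and the $n$-good containments for $n \ge m$) and Zariski-open ones ($n$-generality for $n < m$), while injectivity is witnessed by the explicit inverse.

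The principal technical obstacle I foresee is controlling the ruling --- equivalently, the sign $y_j$ --- at each jump so that the chained applications of Wang's group law accumulate precisely to $\sum(p_j) - (m-1)(\infty)$ rather than to a translate by some 2-torsion element of $J_T$. This will demand a careful orientation analysis using Theorem \ref{Jerry}'s explicit description of the group law in terms of pencils of quadrics and their rulings, together with the identity $(p) + (\sigma p) \sim 2(\infty)$ for the hyperelliptic involution $\sigma$.
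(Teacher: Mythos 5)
Your central mechanism for producing the divisor is where the proposal breaks down. You propose that each of the $m$ failures of the shift condition contributes a point $p_j=(x_j,y_j)$ of $C_T$ with $x_j$ a root of $p_T$. First, this is internally inconsistent: if $x_j$ is a root of $p_T$ then $p_j$ is a Weierstrass point, $y_j=0$, and there is no ruling/sign left to choose. Second, and more seriously, it cannot give the asserted image: divisor classes $\sum_j(p_j)-(m-1)(\infty)$ supported on Weierstrass points form a \emph{finite} subset of $\text{Pic}^1(C_T)$, whereas $\til{X}_{T,m}\bsl\til{X}_{T,m-1}$ is $m$-dimensional and, since an $m$-exact flag is determined by $W^g$ alone (Lemma \ref{uniqueflag}) and $\times2$ is finite \'{e}tale, the classes $(\times2)[W^g]$ must sweep out an $m$-dimensional family. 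Already for $m=1$ (say $g=1$): a $1$-exact flag is just a common isotropic $g$-plane $W^g\not\subset V$, and as it moves in its one-parameter family $(\times2)[W^g]$ traces out an open piece of (a $J_T[2]$-cover of) $C_T$, whose points are generically not Weierstrass. The actual relation between the flag and the divisor is more indirect: in the paper one builds from the flag the auxiliary spaces $U_n\subset U^n$, passes to the residual pencil on $\overline{V}=U^{\lfloor\frac{m+1}{2}\rfloor}/U_{\lfloor\frac{m+1}{2}\rfloor}$, and the Weierstrass points that do enter are those of an auxiliary curve $\bar{C}_T$ of genus $\lfloor\frac{m}{2}\rfloor$ attached to the discriminant $p_T^{(\lfloor\frac{m+1}{2}\rfloor)}$ (not to $p_T$); a chain of codimension-one moves coming from the $m=0$ case on $\overline{V}$ is then lifted back to $V\oplus k$, and only through that lifted chain do the (moving, non-Weierstrass) points of $C_T$ appear. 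Even then this argument only yields the containment $\til{j}(F^{ex}_{T,m})\subset\til{X}_{T,m}$ for \emph{generic} $T$ among those making a fixed flag $m$-excellent, and is extended to all such $T$ by a continuity argument with Theorem \ref{Jerry} in families — a degeneration step your direct pointwise construction would also have to confront.

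Your plan for the converse and for the immersion statement also asks for more than is needed, without supplying the missing ingredient. The paper never inverts the construction explicitly: once the containment above is known, the exact image and the isomorphism follow from a dimension count ($F^{ex}_{T,m}$ is cut out by $g-m$ equations inside the at-least-$g$-dimensional $F_{T,good}$, so the $m$-exact locus has dimension at least $m$), properness of $F^{ex}_{T,m}$, and the uniqueness statements of Lemma \ref{uniqueflag}, which give both injectivity of $\til{j}$ on $F_{T,m}$ and the fact that the excellent-but-not-exact locus maps into $\til{X}_{T,m-1}$. Recovering the flag from $[W^g]$, which is the part of your inverse that does work, is exactly this uniqueness lemma; the genuinely hard step — deciding that a given $m$-exact $[W^g]$ doubles into $X_{T,m}$ and not into a smaller stratum or elsewhere — is precisely what your point-extraction recipe was supposed to provide and does not. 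As written, the proposal therefore has a genuine gap at its key step.
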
\qp

A more direct proof of this theorem in the case $m\le 2$ was shown to me by X. Wang. Nevertheless, we begin our proof for the general case with two simple lemmas:\p

\begin{lemma}\label{rs}
If $0\subsetneq W\subsetneq V$ is such that $\inn$ is trivial on $W$, then $T(W)\not=W$.
\end{lemma}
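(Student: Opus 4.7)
The plan is to argue by contradiction: assume $T(W)=W$, and derive that $\langle\cdot,\cdot\rangle|_W$ must be non-degenerate, contradicting the hypothesis that $W$ is isotropic (and nonzero).

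First I would pass to $\bar{k}$, since isotropy and $T$-stability are preserved by base change, and regular semisimplicity of $T$ forces $V\otimes_k\bar{k}$ to split as $\bigoplus_{i=1}^n V_{\lambda_i}$, a direct sum of one-dimensional eigenspaces for distinct eigenvalues $\lambda_1,\ldots,\lambda_n$ of $T$. Because $T$ is self-adjoint with respect to $\langle\cdot,\cdot\rangle$, the identity $\lambda_i\langle v,w\rangle=\langle Tv,w\rangle=\langle v,Tw\rangle=\lambda_j\langle v,w\rangle$ for $v\in V_{\lambda_i}$, $w\in V_{\lambda_j}$ with $\lambda_i\ne\lambda_j$ forces the distinct eigenspaces to be mutually orthogonal. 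Since $\langle\cdot,\cdot\rangle$ is non-degenerate on $V$, its restriction to each one-dimensional $V_{\lambda_i}$ must then be non-zero.

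Next, under the assumption $T(W)=W$, the subspace $W\otimes_k\bar{k}$ is $T$-stable, and by semisimplicity of $T$ it must be a sum $\bigoplus_{i\in S}V_{\lambda_i}$ over some index set $S\subset\{1,\ldots,n\}$. From the orthogonality established above, the restriction of $\langle\cdot,\cdot\rangle$ to $W\otimes_k\bar{k}$ is the orthogonal direct sum of its restrictions to each $V_{\lambda_i}$, $i\in S$, each of which is non-degenerate. Hence $\langle\cdot,\cdot\rangle|_W$ is non-degenerate, forcing $W=0$, contrary to the hypothesis $W\ne 0$.

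There is essentially no hard step here: the only thing to double-check is that regular semisimplicity of $T\in\Lg(1)^{rs}$ really does provide $n$ distinct eigenvalues over $\bar{k}$ (which is immediate from $p_T$ being separable by construction), and that the self-adjointness of $T$ with respect to $\langle\cdot,\cdot\rangle$—which is exactly the content of $T\in\Lg(1)=\mathrm{End}^{\text{self-adj}}(V)$—is used correctly. Both are built into the setup of Section \ref{subodd}, so the contradiction is clean and the lemma follows.
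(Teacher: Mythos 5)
Your proof is correct: every step checks out, since $p_T$ is separable (so $T$ has $n$ distinct eigenvalues over $\bar k$, each eigenspace a line), self-adjointness makes distinct eigenlines mutually orthogonal, non-degeneracy of $\inn$ then makes each eigenline anisotropic, and a $T$-stable subspace is necessarily a sum of eigenlines, hence non-degenerate for $\inn$ --- incompatible with being nonzero and isotropic. This is, however, a different route from the paper's. The paper never base-changes or diagonalizes: it sets $W^{\perp}=\{v\in V\mid \langle v,w\rangle=0\ \forall w\in W\}$, notes $W\subset W^{\perp}$ by isotropy, observes that $T(W)=W$ forces $T(W^{\perp})=W^{\perp}$, and that the pairing identifies $T|_W$ as the adjoint of $T|_{V/W^{\perp}}$, so these two restrictions have the same eigenvalues; this produces repeated eigenvalues in $p_T$, contradicting regular semisimplicity. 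So the paper derives a contradiction with separability of $p_T$ via a coordinate-free duality argument, while you derive a contradiction with isotropy of $W$ via an explicit spectral decomposition over $\bar k$. Your version is more concrete and actually proves the slightly stronger fact that every nonzero $T$-stable subspace of $V$ is non-degenerate for $\inn$; the paper's version is shorter, avoids any base change, and only uses the characteristic-polynomial formulation of regular semisimplicity. Both are valid under the standing assumption $\mathrm{char}(k)\neq 2$, which you (like the paper) implicitly use when identifying the vanishing of the quadratic form on $W$ with the vanishing of the associated bilinear form.
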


\begin{proof}Let $W^{\perp}=\{v\in V\,|\,\langle v,w\rangle=0,\;\forall w\in W\}$. Then $W\subset W^{\perp}$ by assumption. Suppose on the contrary $T(W)=W$, then by adjointness $T(W^{\perp})=W^{\perp}$, and $T|_W$ is the adjoint of $T|_{V/W^{\perp}}$. But this says $T|_W$ and $T|_{V/W^{\perp}}$ have the same eigenvalues. Hence $T$ cannot be regular semisimple.
\end{proof}\qp

In the rest of this section, we work ``geometrically,'' {\it i.e. we replace $k$ by an algebraic closure $\bar{k}$}, so that we can omit the notations $\cdot\otimes_kk'$ and so on. This will make no harm to what we want to prove.\p

\begin{lemma}\label{uniqueflag} Let $0\le m\le g$.\qp

(i)$\;\;$ Suppose $0\subset W^1\subset...\subset W^g\subset V\oplus k$ and $0\subset (W^1)'\subset...\subset (W^g)'\subset V\oplus k$ are such that $W^g=(W^g)'$. If one of the flags is $m$-general, then $W^i=(W^i)'$ for $g-m\le i\le g$. In particular the other is also $m$-general.\qp

(ii)$\;\,$ If $0\subset W^1\subset...\subset W^g\subset V\oplus k$ is $m$-good, then there is a unique $0\subset (W^1)'\subset...\subset (W^g)'\subset V\oplus k$ which is $m$-excellent such that $W^i=(W^i)'$ for $g-m\le i\le g$.\qp

(iii)$\;$ If in (i) both flags are $m$-exact, then the two flags are the same.\qp
\end{lemma}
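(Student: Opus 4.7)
The plan is to prove the three parts in sequence, using that for a good flag each subspace $W^{g-j}$ is essentially pinned down by $W^{g-j+1}$, $W^{g-j+2}$, and the $T$-conditions (iv), (v). For part (i), I would induct downward on the index $i$, starting from the hypothesis $W^g=(W^g)'$. The step $i=g-1$ uses that $m$-general with $m\ge 1$ forces the flag not to be $0$-good, so $W^g\not\subset V$ and hence $\dim(W^g\cap V)=g-1$; by condition (iii), both $W^{g-1}$ and $(W^{g-1})'$ are $(g-1)$-dimensional subspaces of $W^g$ lying in $V$, so both equal $W^g\cap V$. For $i=g-j$ with $j\ge 2$, assuming the higher pieces already match, condition (iv) (or condition (v) when $j=2$) places $W^{g-j}$ inside
\[\{w\in W^{g-j+1}\,|\,Tw\in W^{g-j+2}\}\]
(respectively the analogue with $\pi_1(W^g)$ in place of $W^g$ when $j=2$). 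The $m$-generality (in particular, not $(j-1)$-good), combined with Lemma \ref{rs}, then forces this ambient subspace to have codimension exactly $1$ in $W^{g-j+1}$, so $W^{g-j}=(W^{g-j})'$ is pinned down. The ``in particular'' clause follows because the two flags agree on the top $m+1$ pieces, so $n$-goodness is the same for each $n<m$.

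For part (ii), given an $m$-good flag I would build the unique $m$-excellent extension recursively: keep $(W^i)'=W^i$ for $g-m\le i\le g$, and for $n\ge m$ define
\[(W^{g-n-1})':=\{w\in(W^{g-n})'\,|\,Tw\in(W^{g-n})'\}.\]
The key point is that this has the correct dimension $g-n-1$: Lemma \ref{rs} gives $T((W^{g-n})')\ne(W^{g-n})'$ since $(W^{g-n})'$ is a proper isotropic subspace, while the already-verified $n$-good condition ensures $T((W^{g-n})')\subset(W^{g-n+1})'$, a space only one dimension larger; a dimension count then forces $T((W^{g-n})')\cap(W^{g-n})'$ to have codimension $1$ in $(W^{g-n})'$, giving the desired dimension. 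I would then verify that the resulting flag still satisfies (iv) and (v) and is $m$-excellent, and that the recursive rule is the unique one compatible with the $n$-good requirement for $n>m$, giving uniqueness.

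Part (iii) follows immediately from (i) and (ii): two $m$-exact flags sharing $W^g$ agree on the top $m+1$ pieces by (i), and since both are $m$-excellent, the uniqueness of the $m$-excellent extension in (ii) propagates the agreement all the way down. The hardest part is likely the delicate book-keeping near the top of the flag, where condition (v) uses $\pi_1(W^g)$ instead of $W^g$. In particular, the $j=2$ step of (i) and the first few steps of the recursion in (ii) for small $m$ need careful dimension analysis; I expect that Lemma \ref{rs} together with the isotropy of $W^g$ for both $\inn_1$ and $\inn_2$, and attention to whether $(0,1)\in W^g$, will be sufficient to make the codimension-one claim precise.
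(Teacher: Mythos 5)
Your argument is essentially the paper's own proof: part (i) by downward induction pinning $W^{g-j}$ as the forced subspace $\{w\in W^{g-j+1}\,|\,Tw\in W^{g-j+2}\}$ (with $\pi_1(W^g)$ at the top step), using $m$-generality to see this is proper and the containment $W^{g-j}\subset\{\dots\}$ to see it has the right dimension; part (ii) by the forced recursion $W^{g-n-1}=W^{g-n}\cap T^{-1}(W^{g-n})$, with the codimension-one count coming from Lemma \ref{rs} together with the sandwich $T(W^{g-n})\subset W^{g-n+1}$ supplied by $n$-goodness (and, at the very first step for small $m$, the isotropy of $W^g$ for both forms, which you correctly flag); and (iii) immediate from (i) and (ii). This matches the paper's proof in structure and in the key dimension counts, at a comparable level of detail.
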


\begin{proof}
If $m\ge 1$, then $W^{g-1}=W^g\cap V$ is unique. Next if $m\ge 2$, then the flag is not $1$-good and $T(W^{g-1})\not\subset W^g$. Since goodness requires $T(W^{g-2})\subset W^g$, we have $W^{g-2}=W^{g-1}\,\cap\, T^{-1}(W^g)$ is also unique. Proceed similarly and we have the uniqueness of $W^{g-1}$, ..., $W^{g-m}$. This proves (i). Now suppose the flag is $m$-good and $(m+1)$-good. Then $T(W^{g-m})\subset W^{g-m+1}$ and $T(W^{g-m-1})\subset W^{g-m}$. However the previous lemma implies $T(W^{g-m})\not\subset W^{g-m}$. Thus $W^{g-m-1}=W^{g-m}\,\cap\, T^{-1}(W^{g-m})$ is the only possibility for this to hold, i.e. for the flag to be $(m+1)$-good. Continue the argument and we obtain (ii), and (iii) follows immediately.
\end{proof}\hp

Let $F^{ex}_{T,m}\subset F_{T,good}$ be projective varieties parameterizing $m$-excellent flags. The key is\p

\begin{lemma}\label{key} $\til{j}(F^{ex}_{T,m})\subset\til{X}_{T,m}$. Also when $m=0$, $\til{j}(F^{ex}_{T,0})=\til{X}_{T,0}$.
\end{lemma}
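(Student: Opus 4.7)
The plan is to induct on $m$, using the group law of Theorem~\ref{Jerry} on $G_T$ together with the pencil-of-quadrics structure on $V\oplus k$.

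Base case ($m=0$): any $0$-excellent flag has $W^g\subset V$, and $W^g$ is then a maximal $\inn$-isotropic subspace of $V$. The unique $(g+1)$-dimensional $\inn_1$-isotropic extension of $W^g$ in $V\oplus k$ is therefore $W':=W^g\oplus k$ (with $k$ the auxiliary second factor). On $W'$ the form $\inn_1$ vanishes identically, while a direct computation using the $\inn_2$-isotropy of $W^g$ reduces $\inn_2|_{W'}$ to $-c_1c_2$ in the last coordinate. Hence the only $g$-dimensional $\inn_2$-isotropic subspace of $W'$ is $W^g$ itself with multiplicity $2$, and the group law gives $[W^g]+[W^g]=(\infty)$ in $\text{Pic}^1(C_T)$. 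Thus $2[W^g]=(\infty)$ and $[W^g]\in\til{X}_{T,0}$. For the converse at $m=0$, if $2[W^g]=(\infty)$ then a short case analysis of the $\inn_1$-ruling extension forces $W^g\subset V$, and Lemma~\ref{uniqueflag}(ii) then extends this to the unique $0$-excellent flag.

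Inductive step ($m\ge1$): it suffices to treat the case of an $m$-exact flag $W^\bullet$, since an $m$-excellent flag that is also $n$-excellent for some $n<m$ maps into $\til{X}_{T,n}\subset\til{X}_{T,m}$ by the inductive hypothesis (the inclusion coming from $(D\in\Sym^{n-1}C_T)\mapsto(D+(\infty)\in\Sym^nC_T)$). Given such a $W^\bullet$, I plan to produce a point $p_m\in C_T$ and a partner $W''\in F_T$ such that $W^g$ and $W''$ are the two $g$-dim $\inn_1,\inn_2$-isotropic subspaces inside a common $(g+1)$-dim $(\inn_2-x_m\inn_1)$-isotropic subspace of $V\oplus k$ (where $x_m$ is the $x$-coordinate of $p_m$), so that the group law gives $[W^g]+[W'']=(p_m)$ in $G_T$. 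The scalar $x_m$ is extracted from the $T$-action on the flag: by $m$-goodness $TW^{g-m}\subset W^{g-m+1}$ and $(m+1)$-goodness $TW^{g-m-1}\subset W^{g-m}$, the induced map $W^{g-m}/W^{g-m-1}\to W^{g-m+1}/W^{g-m}$ between $1$-dimensional spaces is non-zero by Lemma~\ref{rs}, and after normalization through the inherited bilinear forms it produces $x_m$; the sign of $y_m$ is fixed by the ruling through $W^g$. One then verifies that $W''$ appears as the top of an $(m-1)$-excellent flag, built from the unchanged middle portion of $W^\bullet$ and the bottom reconstructed via Lemma~\ref{uniqueflag}(ii). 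Combining with the induction hypothesis $2[W'']=\sum_{i=1}^{m-1}(p_i)-(m-2)(\infty)$ in $\text{Pic}^1(C_T)$ and the hyperelliptic identity $(p)+(\iota p)\sim2(\infty)$ in $\text{Pic}^2(C_T)$ (where $\iota$ is the hyperelliptic involution), manipulating the relation $[W^g]+[W'']=(p_m)$ yields $2[W^g]=\sum_{i=1}^m(q_i)-(m-1)(\infty)$ in $\text{Pic}^1(C_T)$ for explicit points $q_i\in C_T$, giving $[W^g]\in\til{X}_{T,m}$.

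The hard part will be the inductive construction, in three sub-steps: (i) the intrinsic extraction of $p_m$ from the $m$-exact data via the pencil of quadrics; (ii) the verification that the partner $W''$ lies in an $(m-1)$-excellent flag, which requires delicate compatibility checks with conditions (iv), (v) and with the various $n$-good conditions once $W^g$ is replaced by $W''$; and (iii) the final Picard-group manipulation to realize $2[W^g]+(m-1)(\infty)$ as an effective divisor class on $C_T$, which is non-trivial because the linear system attached to a generic degree-$m$ divisor class may well be empty when $m<g$.
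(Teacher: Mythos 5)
Your base case is essentially the paper's (it is \cite[Remark 2.30]{Jerry}: the involution $(v,c)\mapsto(v,-c)$ realizes $(\infty)-[W]$, forcing $W^g\subset V$, and Lemma \ref{uniqueflag} finishes). The inductive step, however, has a genuine gap, and it is exactly the one you flag in sub-step (iii) — but it is not a technical loose end, it is fatal to the one-point-at-a-time scheme. Do the Picard bookkeeping: if $[W^g]+[W'']=(p_m)$ in $G_T$ and, by your inductive hypothesis, $\times2([W''])=(q_1)+\cdots+(q_{m-1})-(m-2)(\infty)$, then doubling gives $\times2([W^g])=2(p_m)-\sum_i(q_i)+(m-2)(\infty)\sim 2(p_m)+\sum_i(\iota q_i)-m(\infty)$, i.e.\ an effective divisor of degree $m+1$ minus $m(\infty)$. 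That places $\times2([W^g])$ in $X_{T,m+1}$, not in $X_{T,m}$; to land in $X_{T,m}$ you would need the degree-$m$ class $2(p_m)+\sum_i(\iota q_i)-(\infty)$ to be effective, which fails for a generic degree-$m$ class when $m\le g$, and you offer no mechanism to force it. Worse, sub-step (ii) is structurally inconsistent with the statement being proved: for a generic $[W]\in\til{X}_{T,m}\setminus\til{X}_{T,m-1}$ and any point $p$, the partner $(p)-[W]$ has double $(p)+(\iota p_1)+\cdots+(\iota p_{m-1})-(m-1)(\infty)$, which again lies in $X_{T,m}\setminus X_{T,m-1}$ generically; so (granting Theorem \ref{geom}) the partner tops an $m$-exact flag, and cannot top an $(m-1)$-excellent one ($(m-1)$-excellent and $m$-exact are mutually exclusive). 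Hence the inductive hypothesis is not applicable to $W''$, and no choice of $p_m$ extracted from the flag will repair this.

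The paper's proof avoids induction on $m$ entirely, and the way it controls the degree is the key point you are missing. From an $m$-good flag it builds the chain $U_0\subset\cdots\subset U_{\lfloor\frac{m+1}{2}\rfloor}\subset U^{\lfloor\frac{m+1}{2}\rfloor}\subset\cdots\subset U^0$ (Lemma \ref{induct}) and passes to the subquotient $\overline{V}=U^{\lfloor\frac{m+1}{2}\rfloor}/U_{\lfloor\frac{m+1}{2}\rfloor}$, where the two forms define a smaller generic pencil whose associated hyperelliptic curve has genus $\lfloor\frac{m}{2}\rfloor$ (after restricting to a Zariski-open set $\Delta\subset\mathcal{V}$ of $T$ where the auxiliary polynomial $p_T^{(\lfloor\frac{m+1}{2}\rfloor)}$ is separable). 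Applying the $m=0$ torsor description \emph{to that smaller pencil}, the relation $\sum_i((p_i)-(\infty))=0$ among its Weierstrass points produces a closed chain of $2\lfloor\frac{m}{2}\rfloor+1$ codimension-one modifications returning to $W^g$; lifted back to $V\oplus k$ this expresses $\times2([W^g])$ using at most $m$ non-infinite points (one point is $\infty$ when $m$ is even), which is precisely membership in $X_{T,m}$ — the bound on the number of points comes from the genus $\lfloor\frac{m}{2}\rfloor$ of the auxiliary curve, not from any inductive peeling. Finally a continuity argument (Theorem \ref{Jerry} in family) removes the genericity assumption on $T\in\mathcal{V}$. If you want to salvage your approach you would need a replacement for this degree control; as written, the induction cannot close.
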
\hp

\begin{lemma} Lemma \ref{key} above implies Theorem \ref{geom}.
\end{lemma}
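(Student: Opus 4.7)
The plan is induction on $m$, reducing Theorem \ref{geom} for $m$ to the theorem for $m'<m$ together with Lemma \ref{key}. The base $m=0$ is immediate from the second half of Lemma \ref{key}, since $F_{T,0}=F^{ex}_{T,0}$ (the $0$-general condition being vacuous) and $\til{X}_{T,-1}=\emptyset$.

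In the inductive step, taking the union over $m'<m$ of the identifications $\til{j}(F_{T,m'})=\til{X}_{T,m'}\bsl\til{X}_{T,m'-1}$ yields $\til{j}(F^{ex}_{T,m-1})=\til{X}_{T,m-1}$. I then verify: (a) injectivity of $\til{j}|_{F_{T,m}}$ on points, (b) disjointness $\til{j}(F_{T,m})\cap\til{X}_{T,m-1}=\emptyset$, (c) surjectivity of $\til{j}|_{F_{T,m}}$ onto $\til{X}_{T,m}\bsl\til{X}_{T,m-1}$, and (d) promotion from a bijection of points to a locally closed embedding. Part (a) is Lemma \ref{uniqueflag}(iii). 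For (b), suppose $F\in F_{T,m}$ and $F'\in F^{ex}_{T,m-1}$ with $\til{j}(F)=\til{j}(F')$; Lemma \ref{uniqueflag}(i), applied with the $m$-general flag $F$, forces $F$ and $F'$ to agree in positions $g-m,\dots,g$, so $F$ inherits $T(W^{g-m+1})\subset W^{g-m+2}$ from the $(m-1)$-goodness of $F'$, contradicting the $m$-generality of $F$.

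For (c), the projective variety $F^{ex}_{T,m}$ maps through $\til{j}$ to a closed subvariety of $F_T$, contained in $\til{X}_{T,m}$ by Lemma \ref{key}, and containing $\til{X}_{T,m-1}$ by induction. Since $\til{X}_{T,m}$ is equidimensional of dimension $m$ (each component being isomorphic via $\times 2$ to a component of $X_{T,m}$, birational to $\Sym^m(C_T)$ for $m\le g$), it suffices to produce an $m$-dimensional family in $F_{T,m}$ meeting every component of $\til{X}_{T,m}\bsl\til{X}_{T,m-1}$. I would do so by inverting Wang's recipe: given a generic effective degree-$m$ divisor $p_1+\dots+p_m$ on $C_T$ avoiding $\infty$, iterate the ``subtraction'' operation $(p)-[W]=[W'']$ (described after Theorem \ref{Jerry}) to produce a nested chain $W^g\supset W^{g-1}\supset\dots\supset W^{g-m}$ in $V\oplus k$ with the required isotropy and $T$-compatibility; Lemma \ref{rs} ensures the successive intersections are transversal, so this is an $m$-exact flag whose image under $\til{j}$ lies in the prescribed stratum. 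Combined with (b) this yields $\til{j}(F_{T,m})=\til{X}_{T,m}\bsl\til{X}_{T,m-1}$.

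For (d), $F_{T,m}\subset F^{ex}_{T,m}$ is locally closed because $m$-generality is an open condition within $F^{ex}_{T,m}$. It then remains to check that the tangent map of $\til{j}|_{F_{T,m}}$ is injective, which is a first-order version of Lemma \ref{uniqueflag}(i): a first-order deformation of $W^g$ propagates uniquely to a deformation of each $W^{g-i}$ via the transversal intersections (e.g.\ $W^{g-i}=W^{g-i+1}\cap T^{-1}(W^{g-i+1})$) furnished by Lemma \ref{rs}. The main obstacle I anticipate is the explicit Wang-theoretic preimage construction in step (c), which genuinely uses the group law on $G_T$ and must be carried out in families; the remaining items reduce essentially to Lemma \ref{uniqueflag}, Lemma \ref{rs}, and Lemma \ref{key}.
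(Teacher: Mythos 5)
Your steps (a), (b) and (d) are sound and essentially parallel to the paper's argument (the paper likewise gets the theorem from Lemma \ref{uniqueflag} plus induction, and also promotes point-injectivity to an isomorphism by a scheme-theoretic strengthening of Lemma \ref{uniqueflag}(i)). The genuine gap is your step (c), which is the heart of the lemma. The paper never constructs flags from divisors: it gets surjectivity indirectly, by a dimension count (every component of the locus of $m$-excellent, $(m-1)$-general flags has dimension $\ge m$, since $F^{ex}_{T,m}$ is cut out by $g-m$ equations in the $(\ge g)$-dimensional $F_{T,good}$), injectivity from Lemma \ref{uniqueflag}(iii), \emph{properness} of $F^{ex}_{T,m}$ (so $\til{j}(F^{ex}_{T,m})$ is closed, hence equals $\til{X}_{T,m}$ once the image is dense), and the observation via Lemma \ref{uniqueflag}(ii) and induction that the non-$m$-general excellent flags map into $\til{X}_{T,m-1}$.

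Your proposed replacement does not work as sketched. First, iterating the subtraction law $(p)-[W]=[W'']$ of Theorem \ref{Jerry} produces a chain of $g$-dimensional isotropic subspaces, each meeting the next in codimension one; it does not produce the nested flag $W^{g-m}\subset\dots\subset W^g$, and even after extracting intersections you must verify conditions (iii)--(v) in the definition of a good flag (in particular $T(W^i)\subset W^{i+2}$, $W^{g-1}\subset V$, $T(W^{g-2})\subset\pi_1(W^g)$) together with $m$-exactness. That verification is exactly the converse of the computation carried out in the proof of Lemma \ref{key} (the $U_n$, $U^n$ analysis), and Lemma \ref{rs} by itself ensures none of it; asserting ``transversality'' does not address the $T$-compatibility. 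Second, even if the construction were completed for \emph{generic} degree-$m$ divisors avoiding $\infty$, it would only give a dense subset of each component of $\til{X}_{T,m}\bsl\til{X}_{T,m-1}$; the claimed equality of the image with the whole stratum still requires a closure/properness argument combined with the statement that $\til{j}(F^{ex}_{T,m}\bsl F_{T,m})\subset\til{X}_{T,m-1}$, which your outline omits. So as written the proposal leaves the surjectivity step unproved; the paper's dimension-count-plus-properness route is the mechanism that makes this step cheap, and some version of it (or a genuinely carried-out converse construction) is needed.
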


\begin{proof} The second statement in Lemma \ref{key} gives the theorem when $m=0$. We now use induction on $m$. Let $\hat{F}_{T,m}\subset F^{ex}_{T,m}$ be the open subvariety that parameterize those flags that are $m$-excellent and $(m-1)$-general. We have $F_{T,m-1}\subset\hat{F}_{T,m}$ as a closed subvariety. By induction $\til{j}$ gives an isomorphism $F_{T,m-1}\cong\til{X}_{T,m-1}\,\bsl\,\til{X}_{T,m-2}$, which is $(m-1)$-dimensional. In particular $F_{T,m-1}\subset\hat{F}_{T,m}$ are both non-empty.\p

On the other hand, a dimension count shows that $F_{T,good}$ has dimension at least $g$, and $F^{ex}_{T,m}\subset F_{T,good}$ is a closed subvariety cut out by $g-m$ equations. As $\hat{F}_{T,m}$ is open in $F^{ex}_{T,m}$, every component of $\hat{F}_{T,m}$ has dimension at least $m$. This says that $F_{T,m}=\hat{F}_{T,m}\,\bsl\,F_{T,m-1}$ is non-empty (as a variety). Lemma \ref{key} will force the image of $F_{T,m}$ under $\til{j}$ to be inside the $m$-dimensional locus $\til{X}_{T,m}$, and Lemma \ref{uniqueflag}(iii) says that the dimension of the image has to be the same as the domain. Since $\dim \til{X}_{T,m}=m$, we have $\til{j}(F_{T,m})\subset\til{X}_{T,m}$ is dense.\p

Since $F^{ex}_{T,m}$ is proper, $\til{j}(F^{ex}_{T,m})=\til{X}_{T,m}$. We also have $\til{j}(F^{ex}_{T,m-1})=\til{X}_{T,m-1}$ by induction. By Lemma \ref{uniqueflag}(ii), the image of $F^{ex}_{T,m}\,\bsl\,F_{T,m}$ under $\til{j}$ is in $\til{X}_{T,m-1}$. By Lemma \ref{uniqueflag}(i), the image of $F_{T,m}$ is disjoint from $\til{X}_{T,m-1}$. Thus $\til{j}(F_{T,m})=\til{X}_{T,m}\,\bsl\,\til{X}_{T,m-1}$. The proof of the uniqueness in Lemma \ref{uniqueflag}(i) can be carefully checked to imply that not only $\til{j}$ is injective on closed point, but also $\til{j}:F_{T,m}\cong\til{X}_{T,m}\,\bsl\,\til{X}_{T,m-1}$ is an isomorphism.
\end{proof}\hp

\begin{proof}[Proof of Lemma \ref{key}]The case $m=0$, including the second statement, is precisely \cite[Remark 2.30]{Jerry}. We review it here.\p

The variety $\til{X}_{T,0}$ is by definition $(\times2)^{-1}(\infty)$. Let $\tau(\infty):V\oplus k\ra V\oplus k$ be the map sending $(v,c)$ to $(v,-c)$. Then for any $[W]\in F_T$, one checks from the definition that $[\tau(W)]=(\infty)-[W]$. The variety $(\times2)^{-1}(\infty)$ thus parameterizes $g$-dimensional varieties $W^g$ in $V\oplus k$, stabilized by $\tau(\infty)$, on which $\inn_1$ and $\inn_2$ vanish.\p

Since $\inn_2$ does not vanish on the second factor of $V\oplus k$, this forces $W^g$ to lie completely in the first factor, i.e. $(\times2)^{-1}(\infty)$ parameterizes $g$-dimensional varieties $W^g$ in $V$ on which $\inn_1|_V=\inn$ and $\inn_2|_V=\langle\cdot,T\cdot\rangle$ vanish. Lemma \ref{uniqueflag}(ii),(iii) then says it extends uniquely to a $0$-exact flag (there we began with a good flag that is $0$-good instead of only $W^g$, but the same proof also applies to the present case). This proves the $m=0$ statement.\p

The $J_T[2]$-structure on $(\times2)^{-1}(\infty)$ can be described as follows: Let $p_0,...,p_{2g},\infty$ be the Weierstrass points of $C_T$. Then $J_T[2]$ is generated by $((p_i)-(\infty))$, $0\le i\le 2g$ with the only relation that $\sum_{i=0}^{2g}((p_i)-(\infty))=0$. The action of $(p_i)-(\infty)$ is described as follows. Say $p_i=(x,0)$. Then $x\inn_1-\inn_2$ is a degenerate quadric with $1$-dimensional kernel $U$. For $[W]\in(\times2)^{-1}(\infty)$, $x\inn_1-\inn_2$ is trivial on the $(g+1)$-dimensional space $W+U$. There will be exactly one $g$-dimensional subspace $W'\subset W+U$, other than $W$, on which $\inn$ and $\langle\cdot,T\cdot\rangle$ are also trivial. It is then defined $((p_i)-(\infty)).[W]=[W']$.\p

We now discuss the general case. From now on $0<m\le g$ is fixed. We shall show $\til{j}(F^{good}_{T,m})\subset\til{X}_{T,m}$ for a {\it generic} $T\in\Lg(1)^{rs}$ (i.e. for $T$ in a Zariski open subset of $\Lg(1)^{rs}$). In fact, what we will do is the following: Fix a flag $\mathbb{F}=(0\subset W^1\subset...\subset W^{g-1}\subset W^g\subset V\oplus k)$ such that $W^{g-1}\subset V$, $W^g\not\subset V$, $k\not\subset W^g$ (here $V$ and $k$ are the first and the second components in $V\oplus k$), and $\inn$ is trivial on $\pi_1(W^g)$. Since all such flags in $V\oplus k$ are conjugate by $G(0)$ (where $G(0)$ preserves $V\subset V\oplus k$ and acts trivially on the second component), without loss of generality we may assume that $\mathbb{F}$ is exactly the flag in interest.\p

There is an irreducible closed subvariety $\mathcal{V}\subset\Lg(1)$ such that the flag is $m$-excellent with respect to $T\in\Lg(1)^{rs}$ if and only if $T$ lies inside $\mathcal{V}$. There is an Zariski open subset of $\mathcal{V}$ consisting of those $T$ for which the flag is $m$-exact. What we shall prove is that for an even smaller open subset $\Delta\subset\mathcal{V}$, all $T\in\Delta$ satisfy $\til{j}(\mathbb{F})\in\til{X}_{T,m}$. A continuity argument by having Theorem \ref{Jerry} in family then extends the result to all $T\in\mathcal{V}$, which is what we need.\p

The case $m=g$ is trivial, and we'll assume $0<m<g$. Let $0\subset W^1\subset...\subset W^g\subset V\oplus k$ be an $m$-good flag. By assumption $T(W^{g-m})\subset W^{g-m+1}$. Consider $U_0=W^{g-m}$ and $U^0=(W^{g-m+1})^{\perp1}:=\{(v,c)\in V\oplus k\,|\,\langle v,w\rangle=0,\forall w\in W^{g-m+1}\}$ (that is, ${}^{\perp1}$ is used to denote the orthogonal complement with respect to $\inn_1$). Following the spirit of \cite[Sec. 3.1]{Jerry}, we define inductively subspaces $U_0\subset U_1\subset...\subset U_{\lfloor\frac{m+1}{2}\rfloor}\subset U^{\lfloor\frac{m+1}{2}\rfloor}\subset U^{\lfloor\frac{m+1}{2}\rfloor-1}\subset...\subset U^0$ as follows:
\[\def\arraystretch{0.6}
\left\{\begin{matrix}
U_n&:=&(U^{n-1})^{\perp1}\cap W^{g-m+2n-1}.\\ \\
U^n&:=&(U_n)^{\perp2}\cap U^{n-1}.
\end{matrix}\right.
\]\qp

\begin{lemma}\label{induct}For $0\le n<\lfloor\frac{m+1}{2}\rfloor$, we have\qp

(i)$\;$ $U^n\supset (W^{g-m+2n+1})^{\perp1}$.\qp

For $0\le n\le\lfloor\frac{m+1}{2}\rfloor$ we have\qp

(ii)$\;\;\;$ $\dim U_n=g-m+n$.

(iii)$\;\;$ $\dim U^n=g+m-n+1$.
\end{lemma}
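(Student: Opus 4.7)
The plan is to prove (i), (ii), and (iii) simultaneously by induction on $n$, with (i) serving as an auxiliary tool at each step. The base case $n=0$ is immediate from the definitions: since $k\not\subset W^g$, $\pi_1$ is injective on $W^{g-m+1}$, so $(W^{g-m+1})^{\perp1} = \pi_1(W^{g-m+1})^{\perp}\oplus k$ has dimension $g+m+1$, confirming (iii); the other two parts at $n=0$ are trivial.

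For the inductive step at $n+1$, I first prove (ii). From (i) at $n$ we get $k\subset U^n$, so by (iii) at $n$, $\dim\pi_1(U^n)=g+m-n$; and taking orthogonal complement of (i) inside $V$ yields $P := \pi_1(U^n)^{\perp}\subset\pi_1(W^{g-m+2n+1})$, of dimension $g-m+n+1$. Since $(U^n)^{\perp1}=P\oplus k$ and $\pi_1$ is a bijection from $W^{g-m+2n+1}$ onto its image, which contains $P$, the intersection $U_{n+1}=(U^n)^{\perp1}\cap W^{g-m+2n+1}$ projects bijectively onto $P$, giving (ii). For (iii) at $n+1$, the pairing $\inn_2$ induces $\phi\colon U^n\to (U_{n+1})^*$ with $\ker\phi = U^{n+1}$; the inclusions $U_n\subset U_{n+1}$ (clear from the definitions) and $U^n\subset(U_n)^{\perp2}$ (definitional for $n\ge1$, a direct check using $T(W^{g-m})\subset W^{g-m+1}$ for $n=0$) ensure $\phi$ factors through the one-dimensional $(U_{n+1}/U_n)^*$, so $\dim U^{n+1}\in\{g+m-n,\, g+m-n+1\}$.

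The crux is ruling out the larger value. Suppose $\mathrm{rank}(\phi)=0$; then $U_{n+1}\subset(U^n)^{\perp1}\cap(U^n)^{\perp2}$, and using $k\subset U^n$ with nondegeneracy of $\inn_2$, this intersection is identified with $\{(v,0)\in V\oplus k : v\in P\cap T^{-1}(P)\}$. Combined with $\pi_1(U_{n+1})=P$ from the previous step, this forces $T(P)=P$. But $P\subset\pi_1(W^{g-m+2n+1})\subset\pi_1(W^g)$ is an $\inn$-isotropic subspace of $V$ that is proper and nonzero for $0<m<g$, contradicting Lemma \ref{rs}. This is the main obstacle and the one step where regular semisimplicity of $T$ is essential. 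Finally, (i) at $n+1$ (when $n+1<\lfloor\frac{m+1}{2}\rfloor$) follows from (i) at $n$ together with $\inn_2((W^{g-m+2n+3})^{\perp1},U_{n+1})=0$; expanding the form, the $k$-component vanishes since $U_{n+1}\subset W^{g-m+2n+1}\subset V$ in this range, and the $V$-component vanishes because $T\pi_1(W^{g-m+2n+1})\subset\pi_1(W^{g-m+2n+3})$, which is goodness (iv) (or (v) at the boundary $g-m+2n+1=g-2$).
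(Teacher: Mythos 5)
Your proof is correct and follows essentially the same route as the paper's: a simultaneous induction on $n$ in which (i) comes from the goodness conditions (iv)--(v), (ii) from a radical/codimension count using (i) and (iii) at the previous step, and (iii) by showing the $\inn_2$-pairing against $U_{n+1}/U_n$ has rank exactly one, with rank zero excluded by a dimension argument and Lemma \ref{rs}. One small caveat, present at the same level in the paper's own argument: your hypothesis only yields $T(P)\subseteq P$ rather than $T(P)=P$ (since $T$ need not be injective on $P$), but this is harmless because the proof of Lemma \ref{rs} excludes $T(W)\subseteq W$ for any nonzero proper $\inn$-isotropic $W$ verbatim.
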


\begin{proof} For $n=0$ it's obvious. We now do induction on $n$. 
For (i) let $0<n<\lfloor\frac{m+1}{2}\rfloor$. Since $U_n\subset W^{g-m+2n-1}\subset W^{g-1}\subset V$, we have $U^n\supset k$. Now $\mathbb{F}$ is good (with respect to $T$) says $T(W^{g-m+2n-1})\subset\pi_1(W^{g-m+2n+1})$. Since $U_n\subset W^{g-m+2n-1}$. By definition of $\inn_2$ we have $(U_n)^{\perp2}\supset (W^{g-m+2n+1})^{\perp1}$. Also by induction $U^{n-1}\supset(W^{g-m+2n-1})^{\perp1}\supset(W^{g-m+2n+1})^{\perp1}$. This gives (i).\p

For (ii), since $U^{n-1}\supset(W^{g-m+2n-1})^{\perp1}$ by (i), we have $k\subset (U^{n-1})^{\perp1}\subset W^{g-m+2n-1}+k$, where $k$ denotes the second component in $V\oplus k$, i.e. the kernel of $\inn_1$. Since $k\not\subset W^{g-m+2n-1}$, this says $\dim U_n=\dim (U^{n-1})^{\perp1}-1=g-m+n$. This proves (ii).\p

Lastly for (iii), by definition $U^{n-1}=(U_{n-1})^{\perp2}\cap U^{n-1}$ (It's important for our construction that it holds when $n=1$!). Since $U_n$ contains $U_{n-1}$ with codimension $1$ by (ii), it suffices to show $(U_n)^{\perp2}\not\supset U^{n-1}$. Suppose on the contrary $(U_n)^{\perp2}\supset U^{n-1}$, then we have $U^{n-1}\subset (U_n)^{\perp1}\cap (U_n)^{\perp2}$. In this case we have $k\subset U^{n-1}\Rightarrow U_n\subset V=k^{\perp2}$. From the definition of $\inn_1$ and $\inn_2$ we get $\pi_1(U^{n-1})\subset (U_n)^{\perp}\cap T(U_n)^{\perp}$, where this time ${}^{\perp}$ means the orthogonal complement in $V$ with respect to $\inn$. But this is impossible, because by Lemma \ref{rs} $(U_n)^{\perp}\cap T(U_n)^{\perp}$ intersect non-trivially and thus have dimension less than $g-m+n$ by (ii), while $\dim\pi_1(U^{n-1})=\dim U^{n-1}-1=g+m-n+1$ by inductive hypothesis from (iii).
\end{proof}\hp

We now come back to the proof of Lemma \ref{key}. Define $L$ to be the variety that parametrize $g$-dimensional subspaces $W$ satisfying $U_{\lfloor\frac{m+1}{2}\rfloor}\subset W\subset U^{\lfloor\frac{m+1}{2}\rfloor}$ and that $\inn_1$ and $\inn_2$ vanish on $W$. In particular by construction we have $U_{\lfloor\frac{m+1}{2}\rfloor}\subset W^{g-m+2\lfloor\frac{m+1}{2}\rfloor-1}\subset W^g$ and $W^g\subset (W^g)^{\perp2}\cap U^0\subset U^{\lfloor\frac{m+1}{2}\rfloor}$, i.e. $[W^g]\in L$.\p

Define $\overline{V}$ to be the subquotient $\overline{V}:=U^{\lfloor\frac{m+1}{2}\rfloor}/U_{\lfloor\frac{m+1}{2}\rfloor}$. Since $U_{\lfloor\frac{m+1}{2}\rfloor}\subset(U^{\lfloor\frac{m+1}{2}\rfloor-1})^{\perp1}\subset(U^{\lfloor\frac{m+1}{2}\rfloor})^{\perp1}$ and $U^{\lfloor\frac{m+1}{2}\rfloor}\subset(U_{\lfloor\frac{m+1}{2}\rfloor})^{\perp2}\Rightarrow U_{\lfloor\frac{m+1}{2}\rfloor}\subset(U^{\lfloor\frac{m+1}{2}\rfloor})^{\perp2}$, the two quadratic forms $\inn_1$ and $\inn_2$ restricts to be quadratic forms on $\overline{V}$. Denote still their restrictions by $\inn_1$ and $\inn_2$, respecitvely. Let $L'$ be the variety that parameterize ${\lfloor\frac{m}{2}\rfloor}$-dimensional subspaces in $\overline{V}$ on which $\inn_1$ and $\inn_2$ are trivial. Then evidently $L\cong L'$.\p

Consider the polynomials $p_T^{(i)}(x)=\text{disc}\left(\inn_1-x\inn_2\right)|_{U^i/U_i}$ for  $i=0,1,...,\lfloor\frac{m+1}{2}\rfloor$. We claim $p_T^{(0)}(x)=x^{2\lfloor\frac{m+1}{2}\rfloor}p_T^{(\lfloor\frac{m+1}{2}\rfloor)}(x)$. To prove this, observe that when we go from $U^0/U_0$ to $U^1/U_1$, we quotient out $U_1/U_0$, on which both $\inn_1$ and $\inn_2$ is zero. Even more, $U_1/U_0$ is in the kernel of $\inn_1$ while $\pair{U_1/U_0}{U^0/U^1}_2$ is non-trivial. This exactly says $p_T^{(0)}(x)=x^2p_T^{(1)}(x)$. Repeating the argument gives the asserted result.\p

$L'$ comes from the situation in \cite[Sec. 2.1]{Jerry}, namely that of a generic pencil of odd dimension. As explained in the beginning of this proof, \cite[Remark 2.30]{Jerry} says that $L'$ is a torsor under the $2$-torsions of the Jacobian of a hyperelliptic curve of genus $\lfloor\frac{m}{2}\rfloor$. And this hyperelliptic curve is simply $\bar{C}_T=(y^2=p_T^{(\lfloor\frac{m+1}{2}\rfloor)}(x))$.\p

Recall $T\in\mathcal{V}$ is such that our flag $\mathbb{F}$ is $m$-excellent with respect to $T$. One checks from definition that when $T$ runs over the subspace $\mathcal{V}$, $p_T^{(0)}(x)$ runs over all polynomials of degree less than or equal to $2m+1$ that are divided by $x^{m+1}$. Consequently, there exists a Zariski open subset $\Delta\subset\mathcal{V}$ such that for $T\in\Delta$, $p_T^{(\lfloor\frac{m+1}{2}\rfloor)}(x)$ is separable (i.e. having distinct roots).\p

For such $T$, let $p_0,...,p_{2\lfloor\frac{m}{2}\rfloor},\infty$ be the Weierstrass points of this hyperelliptic curve. We have $((p_0)-(\infty))+...+((p_{2\lfloor\frac{m}{2}\rfloor})-(\infty))=0\in\text{Pic}^0(\bar{C}_T)$. By the $\text{Pic}^0(\bar{C}_T)[2]$-torsor structure on $L'$ as described in the $m=0$ case at the beginning of this proof, we have a sequence $\Omega_0$, $\Omega_1$, ..., $\Omega_{2\lfloor\frac{m}{2}\rfloor}$, $\Omega_{2\lfloor\frac{m}{2}\rfloor+1}=\Omega_0$ such that $\Omega_i$ and $\Omega_{i+1}$ intersect in codimension $1$. When $m$ is even, $\bar{p}_T(x)$ is divisible by $x$, which means that one of the Weierstrass point $p_i=(0,0)$, i.e. the quadric $\inn_1$ is degenerate on $\overline{V}$, and $\inn_1$ vanish on the sum $\Omega_i+\Omega_{i+1}$ for some $i$.\p

Now recall that $\overline{V}$ is a subquotient of $V\oplus k$. The preimage of $\Omega_0$, $\Omega_1$, ..., $\Omega_{2\lfloor\frac{m}{2}\rfloor}$, $\Omega_{2\lfloor\frac{m}{2}\rfloor+1}=\Omega_0$ is a sequence $\hat{\Omega}_0$, ..., $\hat{\Omega}_{2\lfloor\frac{m}{2}\rfloor+1}$ of subspaces in $V\oplus k$ with $\hat{\Omega}_0=\hat{\Omega}_{2\lfloor\frac{m}{2}\rfloor+1}=W^g$, and that $\hat{\Omega}_i$ and $\hat{\Omega}_{i+1}$ intersects in codimension $1$. This says that there are points $p_1,....,p_{2\lfloor\frac{m}{2}\rfloor+1}$ on $C_T$ such that $((p_1)-((p_2)-...-((p_{2\lfloor\frac{m}{2}\rfloor+1})-[W^g])...))=[W^g]$, and one of $p_i$ is $\infty$ when $m$ is even. By definition of the group structure, this says $(\times2)([W^g])\in X_{T,m}$. We have therefore finished the proof of the lemma and hence that of Theorem \ref{geom}.
\end{proof}\hp

\begin{remark} I first learned from Jack Thorne the idea that symmetric powers of $C_T$ should arise, which he observed in his unpublished work generalizing his results in \cite{Th} to nilpotent orbits of two Jordan blocks in type $\mathbf{A}$. In fact, if one substitutes this whole section with corresponding result in \cite[Thm 3.7]{Th} and re-apply the method in Section \ref{seccompute}, one can obtain stable Shalika germs for subregular nilpotent orbits in terms of number of rational points on the curves in \cite[Thm 3.7]{Th}. This shows, for example, that certain stable subregular Shalika germs of $E_6$ (resp. $E_7$, $E_8$) will be given by counting points on non-hyperelliptic curves of genus $3$ (resp. $3$, $4$).
\end{remark}\hp

\hp
\subsection{Even case}\label{subeven} In this subsection $n=2g+2$; $\tG=U_{2g+2}(E/F)$, $G=\text{GL}_{2g+2}/_k=\text{GL}(V)$, $G(0)=\text{SO}(V)$ and $\Lg(1)=\Sym^2(V)$ where $V$ is a $2g+2$ dimensional non-degenerate split quadratic space. The method in this subsection is almost identical to that of the previous one, and we only list the setting, definition and results here.\p

We have parallel result to Theorem \ref{geom}. Again fix $T\in\Lg(1)^{rs}(k)$. We also write $p_T(x)\in k[x]$ the monic characteristic polynomial of $T$. Our hyperelliptic curve $C_T:=(y^2=p_T(x))$ now has two points above the infinity on $\mathbb{P}^1$. We shall denote these two points by $\infty^{(1)}$ and $\infty^{(2)}$. They are both defined over $k$.\p

Consider $L=k[x]/p_T(x)$. The Weil restriction $\text{Res}_k^L\mu_2$ now has not only a surjective norm map $Nm:\text{Res}_k^L\mu_2\ra\mu_2$ but also a diagonal embedding $\Delta:\mu_2\ra\text{Res}_k^L\mu_2$. We have $\Stab_{\text{O}(V)}(T)\cong\text{Res}_k^L\mu_2$, also $\Stab_{G(0)}(T)\cong\ker(\text{Res}_k^L\mu_2\xra{Nm}\mu_2)$, and lastly $J_T[2]\cong\Stab_{G(0)}(T)/Z(G(0))\cong\left(\ker(\text{Res}_k^L\mu_2\xra{Nm}\mu_2)\right)/\Delta(\mu_2)$.\p

We denote by $\inn_1=\inn$ the standard quadratic form on $V$, i.e. the one which is invariant by $G(0)$. Then $\infty^{(1)}$ and $\infty^{(2)}$ are just the two rulings of $\inn_1$. Define $\inn_2$ on $V$ by $\pair{v_1}{v_2}_2=\pair{v_1}{Tv_2}_1$. Then the theory of pencil of quadrics says:\hp

\begin{theorem}\label{Jerry2} (Wang \cite[Thm. 2.26]{Jerry}) Let $F_T$ be the variety that parameterizes $g$-dimensional subspaces of $V$ that are isotropic with respect to $\inn_1$ and $\inn_2$. Then there is a commutative algebraic group structure on

\mhp
$$G_T:=J_T\sqcup F_T\sqcup\text{Pic}^1(C_T)\sqcup F_T',$$\hp

where $F_T'\cong F_T$ as a variety, the addition law on $J_T\sqcup\text{Pic}^1(C_T)$ agrees with that of $\text{Pic}(C_T)/((\infty^{(1)})+(\infty^{(2)})=0)$, and $G_T$ has component group equal to $\Z/4$.
\end{theorem}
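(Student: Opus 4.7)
The plan is to parallel Wang's strategy for the odd case (Theorem \ref{Jerry}), with the role of $V \oplus k$ replaced by $V$ itself. The geometric input is the pencil of quadrics $Q_{[s:t]} := s\inn_1 + t\inn_2$ on $V$. For $[s:t]$ outside the discriminant locus, $Q_{[s:t]}$ is a smooth quadric in the $(2g+2)$-dimensional space $V$, whose maximal isotropic subspaces have dimension $g+1$ and come in two rulings. The variety of rulings as $[s:t]$ varies over $\mathbb{P}^1$ is a double cover $\pi : C_T \to \mathbb{P}^1$, branched exactly where $Q_{[s:t]}$ is degenerate, namely at the $2g+2$ roots of $p_T(x)$. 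Because $\inn_1$ itself is split (by our choice of $\bx$), both of its rulings are defined over $k$, giving the two $k$-rational points $\infty^{(1)}, \infty^{(2)}$ above $x = \infty$ and identifying $\pi$ with the smooth hyperelliptic curve $y^2 = p_T(x)$.

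The next step is to build a doubling map $\mu : F_T \to \text{Pic}^1(C_T)/((\infty^{(1)})+(\infty^{(2)}))$. Given $[W] \in F_T$ and general $[s:t] \in \mathbb{P}^1$, the $(g+1)$-dimensional isotropic subspaces of $Q_{[s:t]}$ containing $W$ form a pair, one in each ruling; this defines an effective divisor on $C_T$, and after normalizing by $(g-1)((\infty^{(1)})+(\infty^{(2)}))$ one obtains a well-defined degree-$1$ class modulo the relation $(\infty^{(1)})+(\infty^{(2)}) = 0$. By the pencil-of-quadrics dictionary, $\mu$ is an \'etale cover with Galois group $J_T[2]$, exhibiting $F_T$ as a torsor under $J_T$ and as a ``half'' of $\text{Pic}^1(C_T)/\sim$. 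The group structure on $G_T := J_T \sqcup F_T \sqcup (\text{Pic}^1(C_T)/\sim) \sqcup F_T'$ is then prescribed by requiring $J_T$ to act by translation on each component and by the geometric recipe for $(p) - [W]$ exactly as reviewed after Theorem \ref{Jerry}: given a ruling on $Q_{[s:t]}$ corresponding to $p$, take the unique $(g+1)$-dimensional space $W'$ in that ruling containing $W$, and let $W'' \subset W'$ be the second $g$-dimensional subspace on which both $\inn_1$ and $\inn_2$ vanish; set $(p) - [W] := [W'']$. This gives $F_T + F_T = \text{Pic}^1(C_T)/\sim$, $F_T + \text{Pic}^1 = F_T'$, $F_T + F_T' = J_T$.

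The main obstacle, as in the odd case, is to verify that this operation is well-defined (independent of the auxiliary choice of $[s:t]$), associative, and that the component group is exactly $\Z/4$ rather than $\Z/2 \oplus \Z/2$. Concretely, one must show that the doubling $\mu : F_T \to \text{Pic}^1(C_T)/\sim$ is not squaring on $\text{Pic}(C_T)/\sim$ itself (which would give $\Z/2 \oplus \Z/2$), but rather the multiplication-by-$2$ map on a $\Z/4$-torsor of components; $F_T$ and $F_T'$ are then the two ``non-square'' classes distinguished by an overall sign coming from the $\mu_2$-quotient in $\Stab_{G(0)}(T) \cong \text{ker}(\text{Res}^L_k \mu_2 \xra{Nm} \mu_2)$ versus $\Stab_{\text{O}(V)}(T) \cong \text{Res}^L_k \mu_2$. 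To reduce the verification to a minimum of calculation, I would work in families over $\Lg(1)^{rs}$: all the maps are defined there, and over the generic point the statement becomes Wang \cite[Thm. 2.26]{Jerry} in its generic-pencil form, so the algebraic group structure on $G_T$ extends by flatness. The two-point-at-infinity feature of the even case then shows up cleanly as the relation $(\infty^{(1)})+(\infty^{(2)}) = 0$, whereas in the odd case this was the single relation $2(\infty) = 0$.
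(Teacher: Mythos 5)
The paper does not prove Theorem~\ref{Jerry2}; it imports it wholesale as a citation to X.~Wang \cite[Thm.~2.26]{Jerry}, exactly as it does for the odd-dimensional companion Theorem~\ref{Jerry}. What surrounds these statements in the paper is only a recollection of the explicit recipe $(p)-[W]$ for the group law (given for the odd case, and implicitly carried over here), not a proof of existence of the group structure or of the $\Z/4$ component group. So there is no in-paper argument for you to compare against: the intended ``proof'' is simply to open Wang's paper.

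Judged on its own, your sketch tracks Wang's construction in outline, and the geometric setup is sound: the pencil $s\inn_1+t\inn_2$ on the $(2g+2)$-dimensional $V$, the double cover $C_T\to\mathbb{P}^1$ of rulings branched at the $2g+2$ roots of $p_T$ (hence unramified over $x=\infty$, giving $\infty^{(1)},\infty^{(2)}$), the pair of $(g+1)$-planes through a given $[W]\in F_T$ sitting in the two rulings of $Q_{[s:t]}$, and the resulting doubling map to $\text{Pic}^1(C_T)$. The genuine gap is in your final paragraph, which is circular: you propose to verify well-definedness, associativity, and the component-group computation by specializing from a family over $\Lg(1)^{rs}$ where ``the statement becomes Wang \cite[Thm.~2.26]{Jerry} in its generic-pencil form'' and extending by flatness. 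That invokes the very theorem you set out to prove. In particular the assertion that $\pi_0(G_T)\cong\Z/4$ rather than $\Z/2\oplus\Z/2$ --- equivalently, that the doubling map sends $F_T$ into the degree-$1$ component $\text{Pic}^1(C_T)$ rather than into $J_T=\text{Pic}^0(C_T)$, so that $[F_T]$ has exact order $4$ --- is the crux of the theorem and must be established directly. Pointing at the $\mu_2$-discrepancy between $\Stab_{G(0)}(T)$ and $\Stab_{\text{O}(V)}(T)$ names a plausible source of the phenomenon but is not a verification; nothing in your sketch actually computes the parity of the degree of the divisor produced by the doubling construction.
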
\hp

We again write the doubling map $\times2:F_T\ra\text{Pic}^1(C_T)$ which is \'{e}tale Galois with Galois group $J_T[2]$. For $0\le m\le g$ with $m$ even, define $j_m^{(1)},j_m^{(2)}:\text{Sym}^m(C_T)\ra\text{Pic}^1(C_T)$ by \[j_m^{(1)}(p_1,...,p_m)=(p_1)+...+(p_m)-\left(\frac{m}{2}-1\right)(\infty^{(1)})-\frac{m}{2}(\infty^{(2)}).\] \[j_m^{(2)}(p_1,...,p_m)=(p_1)+...+(p_m)-\frac{m}{2}(\infty^{(1)})-\left(\frac{m}{2}-1\right)(\infty^{(2)}).\]\hp

And we define $X_{T,m}^{(i)}$ to be the image of $j_m^{(i)}$, and $\til{X}_{T,m}^{(i)}=(\times2)^{-1}\left(X_{T,m}^{(i)}\right)$, $i=1,2$.\p

For $0<m\le g$ with $m$ odd, we define $j_m^{(0)},j_m^{(1)},j_m^{(2)}:\text{Sym}^m(C_T)\ra\text{Pic}^1(C_T)$ by
\[j_m^{(0)}(p_1,...,p_m)=(p_1)+...+(p_m)-\frac{m-1}{2}(\infty^{(1)})-\frac{m-1}{2}(\infty^{(2)}).\]
\[j_m^{(1)}(p_1,...,p_m)=(p_1)+...+(p_m)-\frac{m-3}{2}(\infty^{(1)})-\frac{m+1}{2}(\infty^{(2)}).\]
\[j_m^{(2)}(p_1,...,p_m)=(p_1)+...+(p_m)-\frac{m+1}{2}(\infty^{(1)})-\frac{m-3}{2}(\infty^{(2)}).\]\hp

And we define $X_{T,m}^{(i)}$ to be the image of $j_m^{(i)}$, and $\til{X}_{T,m}^{(i)}=(\times2)^{-1}\left(X_{T,m}^{(i)}\right)$, $i=0,1,2$.\p

Next, we introduce the notion of good flags. A flag of subspaces $0\subset W^1\subset...\subset W^{g+1}\subset V$ is called {\bf good} if\hp

(i)$\;\;\;$ $\dim W^i=i$.

(ii)$\;\;$ The restriction of $\inn_1$ to $W^{g+1}$ is zero.

(iii)$\;$ The restriction of $\inn_2$ to $W^g$ is zero.

(iv)$\,\,$ $T(W^i)\subset W^{i+2}$, $\forall 1\le i\le g-1$.\p

For $0\le m\le g$, a good flag is called {\bf $m$-good} if $T(W^{g-m})\subset W^{g-m+1}$. Here $W^{-1}=W^0=\{0\}$, i.e. good flags are automatically $g$-good. A flag is called {\bf $m$-excellent} if it is $n$-good for $m\le n\le g$.\p

For $0\le m\le g$, a good flag is called {\bf $m$-general} if it is not $n$-good for any $0\le n<m$. For any $0<m\le g$, we now define the notion of {\bf $m$-exact} flags\footnote{See also Remark \ref{halfexact}.}. Let $\{W^r\}_{r=1}^{g+1}$ be any $m$-excellent and $m$-general flag. There always exists another $m$-excellent flag $\{U^r\}_{r=1}^{g+1}$ satisfying $U^g=W^g$ but $U^{g+1}\not=W^{g+1}$ if $m$ is odd, or $U^{g-1}=W^{g-1}$, $U^{g+1}=W^{g+1}$ but $U^g\not=W^g$ if $m$ is even. We say $\{W^r\}_{r=1}^{g+1}$ is $m$-exact if $\{U^r\}_{r=1}^{g+1}$ is also $m$-general. Lastly, a $0$-excellent flag is said to be $0$-exact.\p

Now let $F_{T,m}^{(1)}$ be the variety that parameterize $m$-exact flags for which $W^{g+1}$ is in the ruling $\infty^{(1)}$, and $F_{T,m}^{(2)}$ be the variety that parameterizes those $m$-exact flags for which $W^{g+1}$ is in the other ruling $\infty^{(2)}$. We have natural maps $\til{j}:F_{T,m}:=F_{T,m}^{(1)}\sqcup F_{T,m}^{(2)}\ra F_T$ by sending $\{W^r\}_{r=1}^{g+1}$ to $W^g$.\p

\begin{theorem}\label{geom2} For $0\le m\le g$, the restriction of $\til{j}$ to $F_{T,m}^{(1)}$ is a locally closed embedding, with image equal to 
\[\begin{matrix}
\til{X}_{T,m}^{(1)}\bsl(\til{X}_{T,m-1}^{(0)}\cup\til{X}_{T,m-1}^{(1)}),&\text{if }m\text{ is even.}\\
\til{X}_{T,m}^{(0)}\bsl(\til{X}_{T,m-1}^{(1)}\cup\til{X}_{T,m-1}^{(2)}),&\text{if }m\text{ is odd.}\\\end{matrix}\]

where for $F_{T,m}^{(2)}$, we replace, in the case $m$ is even, the two superscripts ${}^{(1)}$ by ${}^{(2)}$.
\end{theorem}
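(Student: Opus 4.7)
The plan is to mirror the proof of Theorem \ref{geom}, which rested on three ingredients: Lemma \ref{rs} (non-preservation of non-trivial isotropic subspaces by $T$), Lemma \ref{uniqueflag} (uniqueness of extensions of good flags), and the key Lemma \ref{key} (identifying the image of $\til{j}$ restricted to the excellent locus with the preimage under doubling of an image of a symmetric power), all wrapped in an induction on $m$ combined with a dimension count.

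First I would establish the even-case analogue of Lemma \ref{uniqueflag}. Lemma \ref{rs} applies verbatim since it depends only on $T\in\Lg(1)^{rs}$, and the forced-choice arguments extending a flag downward from $W^{g+1}$ or $W^g$ should go through with minor index shifts. The definition of $m$-exact is more delicate here because of the two rulings: once $W^g$ or $W^{g+1}$ (depending on the parity of $m$) is fixed, there are in general two possible extensions to an $m$-excellent flag, one for each ruling choice. Requiring that the \emph{other} extension $\{U^r\}$ also be $m$-general is precisely what rules out the image $\til{j}(\{W^r\})$ lying in the appropriate $\til{X}_{T,m-1}^{(\ast)}$, and thus accounts for the two excised pieces in the statement.

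Next, for the analogue of Lemma \ref{key}, I would perform the auxiliary-subspace construction from the odd case directly inside $V$ — no artificial extension to $V\oplus k$ is needed, since $V$ is already even-dimensional. Setting
\[U_n := (U^{n-1})^{\perp 1}\cap W^{g-m+2n-1},\qquad U^n := (U_n)^{\perp 2}\cap U^{n-1}\]
with appropriate base case, one obtains a subquotient $\overline V = U^{\floor{(m+1)/2}}/U_{\floor{(m+1)/2}}$ carrying a smaller generic pencil whose discriminant polynomial $p_T^{(\floor{(m+1)/2})}(x)$ satisfies $p_T^{(0)}(x) = x^{2\floor{(m+1)/2}}p_T^{(\floor{(m+1)/2})}(x)$. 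Applying Theorem \ref{Jerry2} to this subquotient pencil in family over the appropriate locus of $\Lg(1)^{rs}$, together with the explicit description of the $J_T[2]$-action via degenerate quadrics at Weierstrass points, pins down the image as lying in the specific $X_{T,m}^{(i)}$ dictated by the parity of $m$ and the ruling of $W^{g+1}$. A continuity argument extends the conclusion from a generic locus to the full variety of $m$-excellent flags.

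The induction and dimension count then conclude the proof exactly as in the odd case: the variety of good flags with $W^{g+1}$ in a fixed ruling has dimension at least $g$, the $m$-excellent locus is cut out by at most $g-m$ equations, and together with the inductive description of $F_{T,m-1}^{(i)}$ and properness of the excellent-flag variety, $\til{j}$ must be an isomorphism of $F_{T,m}^{(i)}$ onto the asserted open subset of $\til{X}_{T,m}^{(\ast)}$. The main obstacle, and the only genuinely new content beyond the odd case, is the combinatorial bookkeeping of which index $i\in\{0,1,2\}$ of $X_{T,m}^{(i)}$ is realized as a function of the ruling of $W^{g+1}$ and the parity of $m$, since the rulings on the smaller pencil on $\overline V$ interact non-trivially with those on $V$ through the reduction, and this matching is what determines whether the excised subvariety is indexed by $\{0,1\}$, $\{1,2\}$, or $\{0,2\}$.
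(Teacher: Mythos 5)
Your proposal is correct and follows exactly the route the paper intends: the paper gives no separate argument for Theorem \ref{geom2}, stating only that the method of subsection \ref{subodd} (Lemmas \ref{rs}, \ref{uniqueflag}, \ref{key}, plus the induction and dimension count) carries over with the pencil now living on $V$ itself and with the two rulings $\infty^{(1)},\infty^{(2)}$ tracked through the definition of $m$-exactness, which is precisely what you outline. Your identification of the exactness condition as the mechanism excising the second subvariety (cf.\ Remark \ref{halfexact}) and of the ruling/parity bookkeeping as the only genuinely new content matches the paper's presentation.
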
\p

\begin{remark}\label{halfexact} If we relax the condition of $m$-exactness to require only $\{W^r\}_{r=1}^{g+1}$ to be $m$-excellent and $m$-general, then the image of $\til{j}|_{F_{T,m}^{(1)}}$ will be $\til{X}_{T,m}^{(1)}\bsl\til{X}_{T,m-1}^{(0)}$ in the even case and $\til{X}_{T,m}^{(0)}\bsl\til{X}_{T,m-1}^{(1)}$ in the odd case. However our definition of $m$-exactness is what one should use for orbital integrals on even ramified unitary groups in Section \ref{seccompute}.
\end{remark}\hp

\p
\section{Main computation}\label{seccompute}

In this section we have $n=2g+1$ except for a part of subsection \ref{submore}, where we'll state differently. We work with the assumption that $\text{char}(k)\gg0$, and leave it to Appendix \ref{char} to explain why this assumption may be dropped.\p

The nilpotent orbits $\CO\in\CO(0)$ of $\tLg(F)$ are classified as follows: The stable orbits, just like in $\mathfrak{gl}_{2g+1}$, are classified by partitions $\lambda=(\lambda_1^{\alpha_1}...\lambda_s^{\alpha_s})$ of $2g+1$ which give the sizes of the Jordan blocks, that is $ \lambda_1>...>\lambda_s$ and $\sum\alpha_i\lambda_i=2g+1$. In such a stable orbit, the orbits are classified by
\[\{(d_i)_{i=1}^s\,|\,\prod_{\lambda_i\text{ odd}} d_i=(-1)^gN_{E/F}E^{\times}\},\]

in which $d_i\in F^{\times}/N_{E/F}E^{\times}$ ($\cong\mu_2$) if $\alpha_i(\lambda_i-1)$ is even and $d_i\in \pi^{1/2}(F^{\times}/N_{E/F}E^{\times})$ (a torsor of $\mu_2$) if $\alpha_i(\lambda_i-1)$ is odd. We'll also denote by $(\lambda,(d_i)_{i=1}^s)_{\tLg}$ the corresponding nilpotent orbit in $\tLg$.\p

This classification goes as follows: let $\til{V}$ be the standard representation of $\til{G}/_E$, i.e. $\til{V}$ is a $(2g+1)$-dimensional hermitian space over $E$, with hermitian form $\inn$. Write $\til{N}\in\tLg$ an arbitrary element in such a nilpotent orbit with $\alpha_i$ Jordan blocks of sizes $\lambda_i$. There exists a unique decomposition $\til{V}=\bigoplus_{i=1}^s\til{V}_i$ such that $\til{N}$ preserves each $\til{V}_i$, that all Jordan blocks of $\til{N}|_{\til{V}_i}$ are of size $\lambda_i$, and that different $\til{V}_i$ and $\til{V}_j$ are orthogonal under $\inn$.\p

We have by definition $\til{N}^{\lambda_i-1}$ induces an isomorphism from $\til{V}_i/\til{N}(\til{V}_i)$ to $\ker(\til{N}|_{\til{V}_i})$. Also one has by the anti-hermitian property of $\til{N}$ that $\pair{\til{N}(\til{V}_i)}{\ker(\til{N}|_{\til{V}_i})}=0$. This allows us to consider a pairing on $\til{V}_i/N(\til{V}_i)$ by $\pair{\cdot}{\til{N}^{\lambda_i-1}\cdot}$. This pairing is non-degenerate, and it is hermitian if $\lambda_i$ is odd and anti-hermitian if $\lambda_i$ is even. The invariant $d_i$ is then the discriminant of this pairing.\p



Similarly, we can speak of nilpotent orbits in $\Lg(1)(k)$, i.e. $G(0)(k)$-orbit in $\Lg(1)(k)$ that are nilpotent in $\Lg$. The stable orbits correspond to the same partitions, and the orbits inside a stable orbit are classified by \[\{(d_i)_{i=1}^s\,|\,d_i\in k^{\times}/k^{\times2},\prod d_i^{\lambda_i}=(-1)^g\}.\]

The classification is done like above by replacing $\til{V}$ by $V$ (the standard representation of $G$), both hermitian and anti-hermitian forms by quadratic forms over $k$, and both $F^{\times}/N_{E/F}E^{\times}$, $\pi^{1/2}(F^{\times}/N_{E/F}E^{\times})$ by $k^{\times}/k^{\times2}$. It's not hard to check that this set is in bijection with the previous one. For our purpose we'd like to give a canonical bijection as follows: for any $N\in\Lg(1)(k)$ nilpotent, there exists a lift $\til{N}\in\tLg(F)_{\bx,-1/2}$ which is also nilpotent. The orbit of such $\til{N}$ is uniquely determined by the orbit of $N$.\p

Let $N_0\in\Lg(1)(k)$ be an arbitrary regular nilpotent element, i.e. one with a single Jordan block. For $1\le m\le g$, let $N_m\in\Lg(1)(k)$ be a nilpotent element with two Jordan blocks of sizes $2g+1-m$ and $m$ whose orbit is classified as $((2g+1-m,m),(-1)^g,1)_{\Lg(1)}$ if $m$ is even and $((2g+1-m,m),1,(-1)^g)_{\Lg(1)}$ if $m$ is odd. Write $\til{N}_m$ for the corresponding nilpotent orbit in $\tLg(F)$. For $m>0$ it's classified by $((2g+1-m,m),(-1)^g,\pi^{-1/2})_{\Lg(1)}$ if $m$ is even and $((2g+1-m,m),\pi^{-1/2},(-1)^g)_{\Lg(1)}$ if $m$ is odd.\p


When $m>0$ there are always two orbits in the stable orbit of $\til{N}_m$. We again fix $T\in\Lg(1)^{rs}(k)$ and a lift $\til{T}\in\tLg(F)_{\bx,-1/2}$ in this section. We shall prove our main theorem (see Theorem \ref{intromain} for the definition of $a_m(T)$):

\begin{theorem}\label{main}
For $0\le m\le g$, we have $\Gamma_{\til{N}_m}^{st}(\til{T})=a_m(T)$ for any lift $\til{T}$ of $T$.\hp

Also $\Gamma_{\til{N}_m'}^{st}(\til{T})=(-1)^ma_m(T)$ for the other nilpotent orbit $\til{N}_m'$ in the same stable orbit.
\end{theorem}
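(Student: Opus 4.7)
The plan is to evaluate both sides of the Shalika germ expansion (\ref{Shalika}) on an explicit sequence of test functions $f_0, f_1, \ldots, f_g$, chosen so that the right-hand side is upper triangular in $m$ and the left-hand side reduces to a geometric point count over the residue field. By DeBacker's homogeneity theorem, it suffices to work with test functions supported on $\tLg(F)_{\bx, -1/2}$, which descend to functions on $\Lg(1)(k)$; I would take $f_m$ to be built from the characteristic function of an explicit locus in $\Lg(1)(k)$ cut out by incidence conditions with a $\theta$-stable Borel $B(0) \subset G(0)$, designed to isolate the $m$-th Jordan stratum. For such $f_m$, the regular semisimple orbital integral $J(\til{T}, f_m)$ becomes a weighted count of $k$-rational points on a quasi-finite cover of a generalized Hessenberg variety attached to $T \in \Lg(1)^{rs}(k)$.

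The key geometric identification is that this cover is precisely the variety $F_{T,m}$ of $m$-exact flags from Section \ref{secgeom}: the isotropy and bracket conditions imposed by the support of $f_m$ translate directly into the notions of good and $m$-excellent, while the requirement that $f_m$ pair nontrivially only with the stratum of Jordan type $(2g+1-m, m)$ singles out the $m$-general/$m$-exact part. Invoking Theorem \ref{geom}, the map $\til{j}$ identifies $F_{T,m}$ with $\til{X}_{T,m} \setminus \til{X}_{T,m-1}$ inside the torsor $F_T$. Summing the orbital integrals over the stable semisimple class of $\til{T}$ descends this count along $\times 2 : F_T \to \text{Pic}^1(C_T)$, replacing $\til{X}_{T,m}$ by its $J_T[2]$-quotient $X_{T,m}$, which is the image of $j_m : \Sym^m(C_T) \to \text{Pic}^1(C_T)$. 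I would then compute $|X_{T,m}(k) \setminus X_{T,m-1}(k)|$ via the Grothendieck-Lefschetz trace formula on symmetric powers of $C_T$, expanded in Frobenius eigenvalues $\lambda_i, \lambda_i'$ on $H^1(C_T, \Ql)$; the $(-1)^m$ signs and the restriction to subsets $S \subset I$ should emerge from an inclusion-exclusion telescoping that cancels the pieces of $X_{T,m}$ already lying in $X_{T,m-1}$, leaving exactly the alternating sum $a_m(T)$.

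For the right-hand side, Ranga Rao's method expresses each nilpotent orbital integral $J(\CO, f_m)$ as a finite combinatorial sum. With $f_m$ so simple thanks to the DeBacker reduction, this sum is indexed by Weyl-group data of $G(0) \cong \text{SO}_{2g+1}$ selecting flags compatible with the Jordan type of $\CO$. A support/dimension argument forces $J(\CO, f_m) = 0$ for $\CO$ with more than two Jordan blocks or with two Jordan blocks $(2g+1-m', m')$ when $m' > m$, while for $m' < m$ the contributions are already known by induction. Inverting the resulting triangular system in $m$ and using the normalization from Appendix \ref{norm} yields $\Gamma_{\til{N}_m}^{st}(\til{T}) = a_m(T)$. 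The companion sign $(-1)^m$ for $\til{N}_m'$ should emerge from the same Ranga Rao computation: the two orbits differ by a scaling by $\pi^{-1/2}$ on the short Jordan block, and this rescaling multiplies the Weyl-group integrand by a factor whose parity is tracked by $m$.

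The main obstacle I anticipate is the precise matching in the geometric step: one needs to verify that the definition of $m$-exactness, rather than the weaker $m$-excellent and $m$-general condition discussed in Remark \ref{halfexact} for the even case, is exactly what is forced by the test-function support in the odd case, so that the geometric count really is $|F_{T,m}(k)|$ without over- or under-counting. Second, the descent from $\til{X}_{T,m}$ to $X_{T,m}$ under the stable-orbit sum requires identifying the $J_T[2]$-action on $F_T$ with the effect of moving $\til{T}$ within its stable orbit, via the Bhargava-Gross parametrization $H^1(k, \Stab_{G(0)}(T)) \cong H^1(k, J_T[2])$. Once these identifications are in place, matching the Lefschetz expansion to the explicit formula for $a_m(T)$ becomes a bookkeeping exercise rather than a genuine difficulty.
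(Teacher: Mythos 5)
Your outline for the first assertion is essentially the paper's own proof: DeBacker homogeneity to justify test functions $f_m$ supported on $\tLg(F)_{\bx,-1/2}$ and constant along $N_m+\Lg(1)_{\ge-1}$, identification of $J(\til{T},f_m)$ with a point count on the $m$-exact flag variety $F_{T,m}$, Theorem \ref{geom} to pass to $\til{X}_{T,m}\bsl\til{X}_{T,m-1}$, descent to $X_{T,m}$ under the stable-orbit sum via Lemma \ref{Galois}, Lefschetz on $\Sym^m(C_T)$, and Ranga Rao's formula reducing $J(\til{N}_{m'},f_m)$ to a Weyl-group sum for $\text{SO}_{2g+1}$, followed by inverting the triangular system (the paper does this inversion by explicit Catalan-type combinatorics rather than induction, but that is bookkeeping, not a different method).

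The one genuine gap is your explanation of the sign $(-1)^m$ for the other orbit $\til{N}_m'$. It cannot come out of the Ranga Rao side as a parity factor in the Weyl-group integrand: with the test function $f_m$ adapted to $N_m$, the orbital integrals $J(\til{N}_{m'}',f_m)$ on the companion orbits vanish (this is part of Proposition \ref{combprop}), so no such factor is visible there; and if one instead uses test functions adapted to a representative $N_m'$ of the other orbit, the nilpotent-side combinatorics is unchanged and produces the same values. In the paper the sign originates entirely on the regular semisimple, geometric side: redoing the computation with $N_m'$ replaces $C_T$ by its quadratic twist, whose Frobenius eigenvalues on $H^1$ are the negatives $-\lambda_i,-\lambda_i'$, so the resulting germ is $a_m$ of the twist, namely $(-1)^m a_m(T)$. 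To complete your argument you would need to carry out this second computation (or an equivalent twisting argument), rather than expect the sign from the nilpotent orbital integrals.
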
\p

For notational convenience, in this section we only compute the Shalika germs for $\til{N}_m$. For the other orbit the computation is identical except that we should replace $C_T$ by its quadratic twist, resulting in the sign $(-1)^m$ in the theorem.\p

We want to plug in (\ref{Shalika}) some test functions $f$ that are locally constant by a ``sufficiently large'' lattice and for which we know how to compute $J(\til{T},f)$. Let $S\subset B\subset G$ be a choice of $\theta$-stable maximal $k$-torus and Borel $k$-subgroup (see also the end of subsection \ref{elem}). Let $\Lb=\text{Lie }B$ and let $B(0)=B\cap G(0)$, $\Lb(i)=\Lb\cap\Lg(i)$, so that $\Lb=\Lb(0)\oplus\Lb(1)$. The same notations apply to $S$.\p

There exists a point $\mathbf{y}$ on the Bruhat-Tits building, which can be taken to be the barycenter of some alcove neighboring to $\bx$, such that $\tLg(F)_{\mathbf{y},-1/2}$ is the preimage of $\Lb(1)(k)$ under $\tLg(F)_{\bx,-1/2}\twoheadrightarrow\Lg(1)(k)$. We make the following hypothesis, which holds by \cite[Thm. 2.1.5]{De} when $\text{char}(k)$ is large enough (compared to $g$).\p

\begin{hypothesis}\label{homo}
Equality (\ref{Shalika}) holds for any compactly supported function that are locally constant by $\tLg(F)_{\mathbf{y},-1/2}$.
\end{hypothesis}\hp

Now we can choose our test functions. Fix now $0\le m\le g$ and let $N_m$ be as before. After a conjugation by some element in $G(0)(k)$, we may and shall assume that there exists a cocharacter $\rho_m:\Gm/_k\ra S(0)$ such that $\rho_m(\lambda)$ acts on $N_m$ by $\lambda^{-2}$. Write $\Lg_j\subset\Lg$ for the subspace on which $\rho_m(\lambda)$ acts by $\lambda^j$, $\Lg(1)_j=\Lg_j\cap\Lg(1)$ and $\Lg(1)_{\ge i}=\bigoplus_{j\ge i}\Lg(1)_{j}$. We may and shall assume that $\rho_m$ lies in the correct Weyl chamber so that $\Lb=\Lg(1)_{\ge 0}$.\p

Let $U$ be the unipotent radical of $B$. $U(0)=U\cap B(0)$ is the unipotent radical of $B(0)$. Let $f_m\in C_c^{\infty}(\tLg(F))$ be the function with support inside $\tLg(F)_{\bx,-1/2}$ defined by $f_m(X)=q^{-g^2}=(\#U(0)(k))^{-1}$ if the image of $X$ in $\Lg(1)$ is in the affine subspace $N_m+\Lg(1)_{\ge-1}$, and $f_m(X)=0$ otherwise. Theorem \ref{main} follows evidently from Hypothesis \ref{homo} and the following two propositions.\hp

\begin{proposition}\label{geomprop}For $0\le m\le g$,
\[J^{st}(\til{T},f_m)=\sum_{m'=0}^ma_{m'}(T)\left[
q^{\lfloor\frac{m-m'+1}{2}\rfloor}\binom{g-m'}{\lfloor\frac{m-m'}{2}\rfloor}
+\sum_{j=0}^{\lfloor\frac{m-m'}{2}\rfloor-1}(q^{m-m'-j}-q^{m-m'-j-1})\binom{g-m'}{j}\right].\]
\end{proposition}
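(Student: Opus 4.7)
My plan is to translate $J^{st}(\til{T}, f_m)$ into a count of $k$-points on a stratified variety built from the flag varieties of Section \ref{secgeom}, and then sum contributions stratum by stratum.

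Since $f_m$ is supported in $\tLg(F)_{\bx,-1/2}$ and depends only on the image of its argument in $\tLg(F)_{\bx,-1/2}/\tLg(F)_{\bx,0}\cong\Lg(1)(k)$, and since $\til{T}$ is anisotropic elliptic of depth $-1/2$ at $\bx$, a standard Moy--Prasad reduction (together with Lemma \ref{Galois} to pass from the $F$-stable orbit of $\til{T}$ to the $k$-stable orbit of $T$) will rewrite $J^{st}(\til{T}, f_m)$ as
\[
\frac{1}{\#U(0)(k)}\cdot\#\{g\in G(0)(k)\,:\,\Ad(g)^{-1}T'\in N_m+\Lg(1)_{\ge -1}\text{ for some }T'\text{ stably conjugate to }T\},
\]
with stabilizer weights absorbed by the normalization. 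The cocharacter $\rho_m$ is regular in $G(0)$: its positive weights on $V$ are $\{2g-m, 2g-m-2,\ldots\}$ from the first Jordan block of $N_m$ and $\{m-1,m-3,\ldots\}$ from the second, and because the two block sizes $2g+1-m$ and $m$ have opposite parity these $g$ values are pairwise distinct. Hence $\rho_m$ determines a full flag $\mathrm{Fl}_{\rho_m}$ in $V$, and the condition $\Ad(g)^{-1}T'\in N_m+\Lg(1)_{\ge -1}$ translates into: the flag $g\cdot\mathrm{Fl}_{\rho_m}$ is \emph{$m$-good} with respect to $T'$ in the sense of Section \ref{secgeom}, together with an $N_m$-normalization on the $\Lg(1)_{-2}$-component. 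The factor $1/\#U(0)(k)$ exactly absorbs the $U(0)(k)$-redundancy in $g$, since $U(0)$ acts trivially on $\Lg(1)_{-2}$ modulo $\Lg(1)_{\ge -1}$.

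Next I stratify this count by the unique $0\le m'\le m$ for which the underlying flag is $m'$-exact (any $m$-good flag is $n$-good for a smallest $n=m'\le m$). By Theorem \ref{geom}, the $m'$-exact stratum is identified via $\til{j}$ with $\til{X}_{T',m'}\setminus\til{X}_{T',m'-1}$, and summing over the stable orbit of $T$ lets one pass through the \'etale $J_T[2]$-cover $\times 2\colon F_T\to\mathrm{Pic}^1(C_T)$ to the image of $\Sym^{m'}(C_T)$ under the Abel--Jacobi map. The Lefschetz trace formula, combined with the hyperelliptic zeta-function identity
\[
\sum_{m'\ge 0}\#\Sym^{m'}(C_T)(k)\,t^{m'}\;=\;\prod_{i=1}^{g}\frac{1-(\lambda_i+\lambda_i')t+qt^2}{(1-t)(1-qt)},
\]
then produces, after telescoping across the successive differences $\#\til{X}_{T',m'}(k)-\#\til{X}_{T',m'-1}(k)$, the coefficient $a_{m'}(T)=(-1)^{m'}e_{m'}(\lambda_1+\lambda_1',\ldots,\lambda_g+\lambda_g')$ as the primitive Frobenius contribution at level $m'$.

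For each $m'$-exact flag I still need to count, weighted by $1/\#U(0)(k)$, the number of $g\in G(0)(k)$ producing that flag with leading term exactly $N_m$. This becomes a purely combinatorial count on the $\rho_m$-weight decomposition of $\Lg(1)$: the bracketed expression should arise from choices of the ``gap'' subspaces $W^{g-m+1}\subset\ldots\subset W^{g-m'}$ together with the residual affine ambiguity in $\Lg(1)_{-1}$. Concretely, $\binom{g-m'}{j}$ should count the combinatorial types of such fillings that use $j$ intermediate subspaces; the $q^{m-m'-j}$-factors record the affine ambiguity in the corresponding $\rho_m$-weight spaces; and the telescoping $q^{m-m'-j}-q^{m-m'-j-1}$ encodes an inclusion--exclusion removing fillings where accidental $n$-goodness relations hold for some $m'<n<m$, leaving $q^{\lfloor(m-m'+1)/2\rfloor}\binom{g-m'}{\lfloor(m-m')/2\rfloor}$ as the generic contribution. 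The conceptual skeleton --- orbital integrals at depth $-1/2$ are finite counts on covers of Hessenberg-like varieties, whose strata are identified by Section \ref{secgeom} with $\Sym$-powers of $C_T$ --- is in place. I expect the main obstacle to be this last step: carefully verifying the exact shape of the combinatorial factor, including how the $\rho_m$-weight decomposition of $V$ interacts with the nested $m'$-excellent and $m'$-general conditions, and confirming that the $U(0)(k)$- and $S(0)(k)$-orbit structure on the space of leading-term normalizations splits into the asserted sum. I anticipate this reduces to a direct but delicate counting argument in the spirit of Ranga Rao's method for nilpotent orbital integrals.
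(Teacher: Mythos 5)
Your first step matches the paper's: reduce $J^{st}(\til{T},f_m)$ via Moy--Prasad reduction and Lemma \ref{Galois} to a weighted count over $G(0)(k)$, reinterpret the count through flags and Theorem \ref{geom}, and pass to symmetric powers of $C_T$. But there is a genuine gap at the heart of your plan: the proposed stratification by the exactness level $m'\le m$ cannot occur. The support condition $\Ad(h)T'\in N_m+\Lg(1)_{\ge-1}$ pins the entire weight-$\le-2$ part of $\Ad(h)T'$ to equal that of $N_m$; since the weight-$(-2)$ entries of $N_m$ in the positions governing $n$-goodness for $n<m$ are \emph{nonzero}, every flag arising in the count is automatically $m$-general, hence $m$-exact. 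So your strata with $m'<m$ are empty, and in particular there is no room for an inner count of ``$m'$-exact flags with leading term exactly $N_m$.'' The correct outcome of this step is the single difference of the paper's Lemma \ref{matrix}, $J^{st}(\til{T},f_m)=\#X_{T,m}(k)-\#X_{T,m-1}(k)$, with the factor $\#J_T[2](k)$ cancelling when one sums over the orbits in the stable orbit; no sum over $m'$ appears at this stage.

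Consequently, the bracketed combinatorial factor cannot be produced the way you propose, by counting ``fillings'' of gap subspaces and $U(0)(k)$/$S(0)(k)$-ambiguities in the spirit of Ranga Rao; that style of count is what the paper uses for the \emph{nilpotent} orbital integrals $J(\til{N}_{m'},f_m)$ (Proposition \ref{combprop}), which is a logically separate computation. In the paper, the sum over $m'$ and the bracket in Proposition \ref{geomprop} come entirely from cohomological bookkeeping applied to the single difference above (Lemma \ref{two}): first $\#X_{T,m}(k)=\#\Sym^m(C_T)(k)-q\,\#\Sym^{m-2}(C_T)(k)$, accounting for the positive-dimensional projective-space fibers of the Abel--Jacobi map (a point your sketch glosses over when you replace the image $X_{T,m}$ by $\Sym^m(C_T)$); then $\#\Sym^m(C_T)(k)=\sum_{m'}(q^{m'}+\cdots+1)\widehat{a}_{m-m'}(T)$ with $\widehat{a}_j(T)=(-1)^j\mathrm{Tr}(\mathrm{Frob}\mid H^j(J_T))$; and finally $\widehat{a}_m(T)=\sum_{m'}q^{m'}\binom{g-m+2m'}{m'}a_{m-2m'}(T)$, which is where the binomial coefficients actually originate. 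Your zeta-function identity is in the right spirit for the middle step, but without the fiber correction and the $H^*(J_T)$-to-primitive decomposition you will not recover the precise bracket, and the flag-filling argument you anticipate as the ``main obstacle'' is not a viable route to it.
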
\p

Here $J^{st}(\til{T},f_m)$ is the sum of $J(\til{T}',f_m)$ where $\til{T}'$ runs over representatives of the orbits in the stable orbit of $\til{T}$.\p

\begin{proposition}\label{combprop}For $0\le m'\le m\le g$,
\[J(\til{N}_{m'},f_m)=q^{\lfloor\frac{m-m'+1}{2}\rfloor}\binom{g-m'}{\lfloor\frac{m-m'}{2}\rfloor}
+\sum_{j=0}^{\lfloor\frac{m-m'}{2}\rfloor-1}(q^{m-m'-j}-q^{m-m'-j-1})\binom{g-m'}{j}.
\]

For nilpotent orbits $\CO$ other than (the orbit of) $\til{N}_{m'}$ with $0\le m'\le m$, $J(\CO,f_m)=0$.
\end{proposition}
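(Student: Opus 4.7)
The plan is to evaluate $J(\til{N}_{m'},f_m)$ by reducing it to a point count on the residue field (using Hypothesis \ref{homo}), establishing the vanishing claim via an orbit-closure argument, and then performing the combinatorial count via Ranga Rao's method together with the Bruhat decomposition of $G(0)=\text{SO}_{2g+1}$.

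Since $f_m$ is the $|U(0)(k)|^{-1}$-scaled characteristic function of the preimage of the slice $N_m+\Lg(1)_{\ge-1}$ under the surjection $\tLg(F)_{\bx,-1/2}\twoheadrightarrow\Lg(1)(k)$, Hypothesis \ref{homo} pushes the orbital integral down to the reductive quotient. Combining this with the identification (recalled at the start of this section) of $\tG(F)$-orbits of lifts $\til{N}_{m'}\in\tLg(F)_{\bx,-1/2}$ with $G(0)(k)$-orbits of $N_{m'}\in\Lg(1)(k)$, and the normalization of Appendix \ref{norm}, the integral reduces to the finite count
\[
J(\til{N}_{m'},f_m)=\frac{1}{|U(0)(k)|}\cdot\#\bigl\{\,g\in G(0)(k)/\Stab_{G(0)}(N_{m'})(k)\ :\ \Ad(g)N_{m'}\in(N_m+\Lg(1)_{\ge-1})(k)\,\bigr\}.
\]

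Next, for the vanishing claim, any $\Lg$-nilpotent $X\in(N_m+\Lg(1)_{\ge-1})(k)$ is a degeneration of $N_m$ under the $\Gm$-action of $\rho_m$ (which contracts the slice onto $N_m$ as the parameter goes to $0$), so its Jordan partition is dominated by $(2g+1-m,m)$. Hence only orbits whose closure contains $N_m$ can contribute, singling out $\til{N}_{m'}$ with $0\le m'\le m$; within the stable orbit of $\til{N}_{m'}$, tracking the discriminants $d_i$ from the quadratic forms on the Jordan-block quotients against the form $\inn$ inherited on the slice pins down the rational orbit meeting the slice as the one containing $\til{N}_{m'}$ itself (and not its stable partner).

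Finally, for the combinatorial count, let $P_m(0)\subset G(0)$ be the parabolic with $\text{Lie}\,P_m(0)=\Lg(0)_{\ge 0}$. Since the slice is $P_m(0)$-stable, the counting locus descends along $G(0)\to P_m(0)\backslash G(0)$ to the orthogonal partial flag variety and stratifies by Bruhat cells. An explicit analysis using the $\rho_m$-weight decomposition shows that the cells with nonzero contribution are parametrized by subsets $S\subset I$ of size $j\le\lfloor(m-m')/2\rfloor$ of a canonical $(g-m')$-element index set $I$ coming from the weight-$0$ part of $\Lg(0)$; after dividing by $|U(0)(k)|$, each $S$ with $|S|=j<\lfloor(m-m')/2\rfloor$ contributes $q^{m-m'-j}-q^{m-m'-j-1}$, reflecting a one-parameter family of free weight coordinates, while each $S$ with $|S|=\lfloor(m-m')/2\rfloor$ contributes $q^{\lfloor(m-m'+1)/2\rfloor}$. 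Multiplying by $\binom{g-m'}{j}$ and summing gives the claimed formula. The main obstacle I expect is this last Bruhat-cell bookkeeping: identifying exactly which Weyl representatives yield contributing cells and extracting the right powers of $q$, with the parity of $m-m'$ governing whether one boundary weight coordinate is free or fixed, which is precisely what produces the asymmetry between the leading term $q^{\lfloor(m-m'+1)/2\rfloor}\binom{g-m'}{\lfloor(m-m')/2\rfloor}$ and the inner sum.
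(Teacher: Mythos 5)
Your central reduction is not valid: the displayed identity
\[
J(\til{N}_{m'},f_m)=\frac{1}{|U(0)(k)|}\cdot\#\bigl\{\,g\in G(0)(k)/\Stab_{G(0)}(N_{m'})(k)\ :\ \Ad(g)N_{m'}\in N_m+\Lg(1)_{\ge-1}\,\bigr\}
\]
treats the $p$-adic nilpotent orbital integral as a finite-field point count, and nothing in the paper (in particular not Hypothesis \ref{homo}, which only asserts the validity of the germ expansion (\ref{Shalika}) for functions locally constant by $\tLg(F)_{\mathbf{y},-1/2}$) gives such a reduction. For the regular semisimple element $\til{T}$ this kind of collapse does hold, because $\Ad(\til{h})\til{T}\in\tLg(F)_{\bx,-1/2}$ forces $\til{h}\in\tG(F)_{\bx}$ and the centralizer is compact; for a nilpotent $\til{N}_{m'}$ the centralizer is non-compact, the set of conjugators carrying $\til{N}_{m'}$ into the support of $f_m$ is non-compact, and the reductions of $\CO\cap\tLg(F)_{\bx,-1/2}$ sweep out many $G(0)(k)$-orbits in $\Lg(1)(k)$, each contributing a volume that is a nontrivial power of $q$. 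This is exactly why the paper computes $J(\til{N}_{m'},f_m)$ via Ranga Rao's formula: one integrates over $\tLg_{\le-2}(F)\cap\CO$ against the weight function $\varphi$ (a product of absolute values recording the modulus/Jacobian attached to $\til{\rho}_{m'}$) and over $\tG(F)_{\bx,0}$, reduces mod $\pi$, and obtains a sum over $U(0)(k)\bsl G(0)(k)/B^{\op}(0)(k)$, i.e.\ the Weyl group $S_g\ltimes\{\pm1\}^g$, with each contributing $\sigma$ weighted by $q$-powers coming from double coset sizes and from $\varphi$. The terms $q^{m-m'-j}$ in the statement arise from these volume factors and cannot come out of a uniform $1/|U(0)(k)|$ scaling of a finite count.

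Beyond this, the combinatorial heart of the argument is only asserted, not carried out: you say an ``explicit analysis'' parametrizes the contributing cells by subsets of a $(g-m')$-element set with the stated $q$-power contributions, but this is precisely the content the paper establishes through Lemma \ref{Sg} (Condition \ref{cond} isolating which Weyl elements $\sigma$ contribute, forcing $\sigma\in S_g$ fixing $1,\dots,m'$) together with the identity $J(\til{N}_{m'},f_m)=\sum_{\sigma\in\Xi_{m,m'}}q^{m-m'-\#\{i\,:\,\sigma(i)>\sigma(i+1)\}}$ and the descent-counting statement $\#\{\sigma\in\Xi_{m,m'}\,:\,\delta_3(\sigma)\le r\}=\binom{g-m'}{r}$ (Proposition \ref{path}); without these your binomial coefficients are unproven. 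Finally, in the vanishing argument your dominance statement is reversed: contracting $X\in N_m+\Lg(1)_{\ge-1}$ with $\rho_m$ shows $N_m$ lies in the closure of the orbit of $X$, so the partition of $X$ dominates $(2g+1-m,m)$ (your conclusion is right, but the stated direction is wrong), and the identification of which rational orbit in each stable orbit actually meets the support still needs the careful discriminant bookkeeping you only gesture at.
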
\p

\subsection{Geometric identification}\label{secss} 

The goal in this subsection is to prove Proposition \ref{geomprop}. We begin with\p

\begin{lemma}\label{Galois} There is a natural bijection between $G(0)(k)$-orbits of $T$ in its stable orbit and $\tG(F)$-orbit of $\til{T}$ in its stable orbit.
\end{lemma}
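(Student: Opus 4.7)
The plan is to parameterize both sides of the bijection using Galois cohomology and to identify these cohomology groups using the anisotropy of $\Stab_{\tG}(\til T)$ over $F^{ur}$. Write $S := \Stab_{G(0)}(T)$, a finite \'{e}tale $k$-group scheme (isomorphic to $J_T[2]$ by the Bhargava--Gross identification recalled above), and $\til S := \Stab_{\tG}(\til T)$, the $F$-torus anisotropic over $F^{ur}$. Standard book-keeping identifies the $G(0)(k)$-orbits in the stable orbit of $T$ with $\ker(H^1(k,S)\to H^1(k,G(0)))$, and the $\tG(F)$-orbits in the stable orbit of $\til T$ with $\ker(H^1(F,\til S)\to H^1(F,\tG))$. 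Since $G(0)\cong\text{SO}_n$ is connected reductive over the finite field $k$, Lang's theorem gives $H^1(k,G(0))=1$, so the residue-field side reduces simply to $H^1(k,S)$.

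The main step is to establish $H^1(F,\til S)\cong H^1(k,S)$. Since $\til S$ is anisotropic over $F^{ur}$, its connected N\'{e}ron model $\mathcal{S}$ over $\CO_F$ has finite \'{e}tale special fiber: any torus in the identity component of $\mathcal{S}_k$ would lift to an unramified subtorus of $\til S$. Reducing the defining relation $\til S.\til T=\til T$ modulo $\pi^{1/2}$ at the vertex $\bx$ produces a canonical isomorphism $\mathcal{S}_k\cong S$. The reduction $\mathcal{S}(\CO_{F^{ur}})\twoheadrightarrow S(\bar k)$ has kernel a pro-unipotent group which, through the Moy--Prasad filtration on $\til S$, is a filtered inverse limit of $\bar k$-vector spaces; its Galois cohomology in positive degrees over $\text{Gal}(F^{ur}/F)=\text{Gal}(\bar k/k)$ vanishes by additive Hilbert 90. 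Inflation--restriction for $F^{ur}/F$ then delivers $H^1(F,\til S)\cong H^1(k,S)$.

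It remains to check that $H^1(F,\til S)\to H^1(F,\tG)$ is the zero map, so that its kernel accounts for all of $H^1(F,\til S)$. For each $T'\in\Lg(1)^{rs}(k)$ stably conjugate to $T$, any lift $\til T'\in\tLg(F)_{\bx,-1/2}$ is regular semisimple and elliptic by the same reasoning applied to $\til T$ in the introduction, and its characteristic polynomial (which reduces to $p_T$) can be matched to that of $\til T$ after a $\tG(F)_{\bx,0}$-adjustment supplied by Hensel's lemma, placing $\til T'$ in the stable orbit of $\til T$. Thus every class in $H^1(F,\til S)$ is represented by a rational orbit. The main technical obstacle is the pair of assertions $\mathcal{S}_k\cong S$ and vanishing of cohomology on the pro-unipotent reduction kernel: both rely on the Bruhat--Tits / Moy--Prasad formalism and exploit the full hypothesis of anisotropy over $F^{ur}$ --- mere $F$-anisotropy would allow a subtorus split by an unramified extension and force a nontrivial torus factor in $\mathcal{S}_k$, breaking the identification with the finite group $S$.
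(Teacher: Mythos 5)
Your overall strategy (classify both sides by Galois cohomology, kill $H^1(k,G(0))$ by Lang, and use $F^{ur}$-anisotropy of the stabilizer to compare $H^1(F,\til S)$ with residue-field cohomology) is the same as the paper's, but two of your key claims are false as stated because you work with $\tG=U_n$ itself, whereas the paper's first move is to replace $\tG$ by $SU_n$ (after checking this does not change the orbits). First, the reduction of $\Stab_{\tG}(\til T)$ at $\bx$ is \emph{not} $S=\Stab_{G(0)}(T)\cong J_T[2]$: the image of $\Stab_{\tG}(\til T)(F)$ in the reductive quotient is $\Stab_{\text{O}_n(k)}(T)\cong\text{Res}_k^{L}\mu_2$, of order $2\#J_T[2](k)$ (this is exactly what is used later in the proof of Lemma \ref{matrix}); for instance the central element $-1\in\tG(F)$ stabilizes $\til T$ and reduces, for $n$ odd, into $\text{O}_n(k)\setminus\text{SO}_n(k)$. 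Hence your ``canonical isomorphism $\mathcal{S}_k\cong S$'' and the resulting $H^1(F,\til S)\cong H^1(k,S)$ fail: $H^1(F,\til S)$ is twice as large. Second, $H^1(F,U_n)\cong\Z/2$ is nontrivial, and since $\til S$ is an elliptic maximal torus the map $H^1(F,\til S)\ra H^1(F,\tG)$ is onto this $\Z/2$, not zero; the rational orbits are classified by the index-two kernel. A concrete test: $n=3$, $p_T$ irreducible over $k$, $\til S\cong\text{Res}_{F_1/F}U_1(E_1/F_1)$ with $F_1/F$ unramified cubic and $E_1/F_1$ ramified. Then $H^1(k,J_T[2])$ is trivial (one orbit on each side), while $H^1(F,\til S)\cong F_1^{\times}/N_{E_1/F_1}E_1^{\times}\cong\Z/2$ maps isomorphically to $H^1(F,U_3)$. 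Your two errors are in opposite directions and cancel in the final count, but each invalidates the written argument; in particular your third paragraph only shows that the classes coming from lifts of the $T'$ lie in the kernel, not that the kernel is all of $H^1(F,\til S)$ (which is false). The paper's fix is precisely the passage to $SU_n$: there $H^1(F,\tG)=0$ because the group is simply connected, and the stabilizer does reduce onto $J_T[2]$ up to a pro-unipotent kernel.

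Two smaller points. An $F^{ur}$-anisotropic torus does not have a smooth $\CO_F$-model with finite \'etale special fiber: anisotropy only kills the toric part of the special fiber, whose identity component is then unipotent of positive dimension. What you need (and in fact then use) is only that $\til S(F^{ur})$ surjects onto the relevant finite group with kernel filtered by $\bar k$-vector spaces, which is the paper's formulation. Finally, to upgrade the injection given by inflation to an isomorphism $H^1(F,\til S)\cong H^1(\text{Gal}(F^{ur}/F),\til S(F^{ur}))$ you must know $H^1(F^{ur},\til S)=0$; you never address this, while the paper invokes Steinberg's theorem.
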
\hp

\begin{proof}For this proof only we'll replace $\tG$ by $SU_{2g+1}(E/F)$. One checks that this replacement does not affect the orbits. The orbits in the stable orbit of $T$ are classified by $\ker(H^1(k,\Stab_{G(0)}(T))\ra H^1(k,G(0))$ and that of $\til{T}$ by $\ker(H^1(F,\Stab_{\tG}(\til{T}))\ra H^1(F,\tG)$. By Lang's theorem and the fact that simply connected group over a non-archimedean local field has trivial $H^1$, we have $H^1(k,G(0))=H^1(F,\tG)=0$. Recall also that $\Stab_{G(0)}(T)\cong J_T[2]$.\p

The key is that our $\til{T}$ has its centralizer $\til{G}_{\til{T}}$ is anisotropic over $F^{ur}$, the maximal unramified extension of $F$ \cite[Thm 2.1]{Ts}. Consider the exact sequence
\mhp
\[1\ra H^1(\text{Gal}(F^{ur}/F),\Stab_{\tG}(\til{T})(F^{ur}))\ra H^1(F,\Stab_{\tG}(\til{T}) )\ra H^1(F^{ur},\Stab_{\tG}(\til{T}) )\]\hp

The last cohomology group is trivial by Steinberg's theorem. The first cohomology group is isomorphic to $H^1(k,J_T[2])$ because $J_T[2](\bar{k})$ is a quotient of $\Stab_{\tG}(\til{T})(F^{ur})$ with kernel possessing a filtration with graded pieces $\cong\mathbb{G}_a$.\p

This finishes the proof of the lemma. Note that from the exact sequence, one also sees that all orbits in the stable orbit of $\til{T}$ appear in $\tLg(F)_{\bx,-1/2}$, and the bijection just established is compatible with the reduction map $\tLg(F)_{\bx,-1/2}\twoheadrightarrow\Lg(1)(k)$ that sends $\til{T}\mapsto T$.
\end{proof}\hp

The main result in this subsection is to translate from the subsection \ref{subodd} that\p

\begin{lemma}\label{matrix}For $0\le m\le g$,\mhp

\[J(\til{T},f_m)=\frac{1}{\#J_T[2](k)}(\#\til{X}_{T,m}(k)-\#\til{X}_{T,m-1}(k)).\]
and
\[J^{st}(\til{T},f_m)=\#X_{T,m}(k)-\#X_{T,m-1}(k).\]
\end{lemma}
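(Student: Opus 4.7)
The plan is to turn the orbital integral $J(\til{T},f_m)$ into a point-count on the reductive quotient and then invoke Theorem \ref{geom}. Since $\Stab_{\tG}(\til{T})$ is anisotropic over $F^{ur}$ (so in particular compact) and $f_m$ is supported on $\tLg(F)_{\bx,-1/2}$, the domain of the orbital integral collapses: only cosets of $\tG(F)_{\bx}/\Stab_{\tG}(\til{T})$ whose $\Ad$-translate of $\til{T}$ lands in the support contribute, and each such translate stays in $\tLg(F)_{\bx,-1/2}$ because the $\tG(F)_{\bx}$-action preserves this Moy--Prasad filtration piece. By the very definition of $f_m$ (and the fact that $\tG(F)_{\bx}/\tG(F)_{\bx,0^+}\cong G(0)(k)$ acts on $\tLg(F)_{\bx,-1/2}/\tLg(F)_{\bx,0}\cong\Lg(1)(k)$ through its reductive quotient), the integral reduces to
\[
J(\til{T},f_m)=\frac{q^{-g^2}}{\#J_T[2](k)}\cdot\#\bigl\{g\in G(0)(k)\,:\,\Ad(g).T\in N_m+\Lg(1)_{\ge-1}\bigr\},
\]
where I use $\Stab_{G(0)}(T)\cong J_T[2]$ from subsection \ref{subodd}.

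The next step is to take the quotient by $U(0)(k)$. Because $\rho_m$ has non-negative weights on $\Lb=\Lg(0)_{\ge 0}$, conjugation by $U(0)$ preserves the $\rho_m$-grading filtration on $\Lg(1)$ and in particular fixes the weight $-2$ component $N_m$ modulo $\Lg(1)_{\ge-1}$. Thus $N_m+\Lg(1)_{\ge-1}$ is $\Ad(U(0))$-stable, and the set above is a disjoint union of $U(0)(k)$-cosets. Dividing by $\#U(0)(k)=q^{g^2}$ cancels the prefactor, reducing the computation to counting cosets $gU(0)(k)\in G(0)(k)/U(0)(k)$ with $\Ad(g).T\in N_m+\Lg(1)_{\ge-1}$; equivalently, counting $B(0)$-fixed flags (in $V$, augmented to $V\oplus k$ as in Section \ref{subodd}) which place $T$ in the required affine subspace.

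The crux is to match this flag-count with $m$-exact flags. The Jacobson--Morozov filtration of $\rho_m$ on $V\oplus k$ is precisely the flag $0\subset W^1\subset\cdots\subset W^g$ of Section \ref{subodd}, and unpacking the condition $\Ad(g).T\in N_m+\Lg(1)_{\ge-1}$ gives exactly the conditions (i)--(v) together with the requirement that the flag be $m$-good but fail to be $(m-1)$-good (i.e.\ the weight $-2$ part of $\Ad(g).T$ equals $N_m$, not a higher-weight shift), which is precisely $m$-exactness. Applying Theorem \ref{geom} then identifies the counting set with $(\til{X}_{T,m}\setminus\til{X}_{T,m-1})(k)$, yielding the first formula. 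The main obstacle is exactly this translation: matching the representation-theoretic weight conditions on $\Ad(g).T$ with the subspace/transport conditions in the definition of $m$-exact flags, which requires a careful Jordan-block analysis of $N_m$ and its centralizer grading.

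For the stable version, sum over the $\tG(F)$-orbits in the stable orbit of $\til{T}$, which by Lemma \ref{Galois} match the $G(0)(k)$-orbits in the stable orbit of $T$, indexed by $H^1(k,J_T[2])$. The inner-form varieties $\til{X}_{T',m}$ for twists $T'$ are the $k$-forms of $\til{X}_{T,m}$ corresponding to classes in $H^1(k,J_T[2])$, and the standard torsor-counting identity for the $J_T[2]$-Galois \'etale cover $\times 2:\til{X}_{T,m}\to X_{T,m}$ gives
\[
\sum_{[T']}\frac{\#\til{X}_{T',m}(k)}{\#J_{T'}[2](k)}=\#X_{T,m}(k).
\]
Taking differences for $m$ and $m-1$ produces the stable formula, finishing the proof.
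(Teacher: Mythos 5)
Your overall strategy is the paper's: collapse the orbital integral to a point count over the reductive quotient, translate the condition $\Ad(g)T\in N_m+\Lg(1)_{\ge-1}$ into $m$-exact flags, apply Theorem \ref{geom}, and for the stable statement sum over $H^1(k,J_T[2])$ via Lemma \ref{Galois} and torsor counting (your last displayed identity is exactly the paper's argument, and that half is fine). But the two steps you assert are where the content of the lemma lives, and as stated they have genuine problems. First, the reduction constant: $\tG(F)_{\bx}/\tG(F)_{\bx,0^+}$ is $O_{2g+1}(k)$, not $G(0)(k)=\text{SO}_{2g+1}(k)$, and the image of $\Stab_{\tG}(\til{T})(F)$ in it is $\Stab_{O_{2g+1}(k)}(T)$, of order $2\#J_T[2](k)$; your formula with $G(0)(k)$ and $\#J_T[2](k)$ is numerically correct only because these two factors of $2$ cancel, and you also never check that $|D(\til{T})|^{1/2}=q^{(2g^2+g)/2}$ cancels the measure $q^{-(2g^2+g)/2}$ of $\tG(F)_{\bx,1/2}$, which is needed to get the clean prefactor at all.

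Second, and more seriously, the coset-to-flag dictionary is not what you describe. The flag attached to a qualifying coset is not simply the $\rho_m$-weight (Jacobson--Morozov) filtration ``augmented'' to $V\oplus k$: the top step genuinely uses the extra coordinate, $W^g=h^{-1}V_{\ge2}+(h^{-1}v_1,\pm1)$ with $\pi_1(W^g)=h^{-1}V_{\ge1}$, so each qualifying $U(0)(k)$-coset produces \emph{two} $m$-exact flags, and conversely each $m$-exact flag determines a unique $B(0)(k)$-coset containing exactly \emph{two} qualifying $U(0)(k)$-cosets; it is this $2$--$2$ correspondence (not a bijection) that makes the count equal $\#F_{T,m}(k)$ after multiplying by $f_m=q^{-g^2}=(\#U(0)(k))^{-1}$. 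Moreover ``$m$-good but not $(m-1)$-good'' is not the definition of $m$-exactness, which requires $n$-goodness for all $n\ge m$ and failure of $n$-goodness for all $n<m$; these extra conditions do follow from the requirement that the weight $\le-2$ part of $\Ad(g)T$ equal $N_m$ exactly, but that is precisely the verification you defer as ``a careful Jordan-block analysis.'' Since that translation is the heart of the proof of the first identity, the proposal is a correct outline of the paper's argument rather than a proof.
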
\hp

\begin{proof}
To ease notation we deal with the case $m>0$. The proof applies to $m=0$ case with a little change in various places. By \cite[Thm 2.1]{Ts}, for $\til{h}\in\tG(F)$, $\Ad(\til{h})\til{T}\in\tLg(F)_{\bx,-1/2}$ iff $\til{h}\in\tG(F)_x$. In particular the centralizer $\Stab_{\tG}(\til{T})(F)\subset\tG(F)_x$. Moreover, $\tG(F)_{\bx,1/2}$ acts trivially on $f_m$ since $f_m$ is locally constant by $\tLg(F)_{\bx,0}\subset\tLg(F)_{y,-1/2}$. The integral is thus essentially a sum over $\tG(F)_x/\tG(F)_{\bx,1/2}\cong O_{2g+1}(k)$.\p

The measure of $\tG(F)_{\bx,1/2}$ is equal to that of $\tLg(F)_{\bx,1/2}$, which in Appendix \ref{norm} can be checked to be $q^{-\frac{2g^2+g}{2}}$. The measure of $\Stab_{\tG}(\til{T})(F)_{1/2}=\Stab_{\tG}(\til{T})(F)\cap \tG(F)_{\bx,1/2}$ is $1$. The image of $\Stab_{\tG}(\til{T})(F)$ in $\text{O}_{2g+1}(k)$ is equal to $\Stab_{\text{O}_{2g+1}(k)}(T)$, which has order $2\#J_T[2](k)$. Also $|D(\til{T})|=q^{2g^2+g}$. We thus have

\begin{equation}\label{ss}
J(\til{T},f_m)=\frac{1}{2\#J_T[2](k)}\sum_{\bar{h}\in \text{O}_{2g+1}(k)}f_m(\Ad(\bar{h})(T))=\frac{1}{\#J_T[2](k)}\sum_{\bar{h}\in \text{SO}_{2g+1}(k)}f_m(\Ad(\bar{h})(T)),
\end{equation}\hp

where $f_m$ in the RHS is understood as a function on $\tLg(F)_{\bx,-1/2}/\tLg(F)_{\bx,0}\cong\Lg(1)$.\p

We have $\rho_m$ acts on $V$ (the standard representation of $G(0)\cong\text{SO}_{2g+1}$) by weights $2g-m$, $2g-m-2$, ..., $m$, $m-1$, ..., $-m$, $-m-2$, ..., $-2g+m$. Let $V_g,...,V_{-g}\subset V$ be the $1$-dimensional subspace on which $\rho_m$ acts by scalars with corresponding weights (in order). Since the quadratic form $\inn$ on $V$ is preserved by $G(0)$, we have $V_j\subset V_i^{\perp}$ unless $i=-j$, i.e. unless their weights sum up to zero. Note that $\rho_m$ acts on $N_m$ with weight $-2$. This implies $N_m(V_i)=V_{i-1}$ for $m<i\le g$ and $-g<i\le -m$, and that $N_m(V_{i+1})=V_{i-1}$ for $-m<i<m$.\p

We also write $V_{\ge n}:=\bigoplus_{n\le i\le g}V_i$. From the description of $N_m$ above, one sees that if $\Ad(h)T\in N_m+\Lg(1)_{\ge -1}$ for some $h\in G(0)(k)$, then there exists $W^g\subset V\oplus k$ such that $\pi_1(W^g)=h^{-1}V_{\ge 1}$, and the flag $\left(0\subset h^{-1}V_g\subset h^{-1}V_{\ge g-1}\subset...\subset h^{-1}V_{\ge 2}\subset W^g\right)$ is $m$-exact (see the definition of $m$-exactness in the paragraph before Theorem \ref{geom}).\p

In fact, by the definition of $N_m$, there exists $v_1\in V_1$ be such that $\langle v_1, N_m(v_1)\rangle=1$. With it $W^g$ can be given by any of the following two choices $W^g=h^{-1}V_{\ge 2}+(h^{-1}.v_1,\pm1)$, where $h^{-1}V_{\ge 2}$ is a subspace of $V$ and thus of $V\oplus k$.\p

Conversely, if there exists an $m$-exact flag $0\subset W^1\subset...\subset W^g\subset V\oplus k$, then there is a unique right $B(0)(k)$-coset, say $B(0)(k)\cdot h$, such that $W^i=h^{-1}V_{\ge g-i+1}$ for $1\le i<g$ and $\pi_1(W^g)=h^{-1}V_{\ge1}$. In this right coset, one checks from the definition of $m$-exactness that there are exactly two right $U(0)(k)$-cosets, say $U(0)(k)\cdot h$, such that $\Ad(h)T\in N_m+\Lg(1)_{\ge-1}$.\p

In other words, there is a 2-2 correspondence between such $m$-exact flags and right $U(0)(k)$-cosets $U(0)(k)\cdot h$ satisfying $\Ad(h)T\in N_m+\Lg(1)_{\ge-1}$. Since $f_m(X)=q^{-g^2}$ when $X\in N_m+\Lg(1)_{\ge-1}$ (recall $q^{-g^2}=(\#U(0)(k))^{-1}$), we conclude from (\ref{ss}) that
\[J(\til{T},f_m)=\frac{1}{\#J_T[2](k)}\#F_{T,m}(k)=\frac{1}{\#J_T[2](k)}\left(\til{X}_{T,m}(k)-\til{X}_{T,m-1}(k)\right).
\]\hp

The last equality follows from Theorem \ref{geom}. This proves the first statement of the lemma.\p

For the stable Shalika germ, by Lemma \ref{Galois} we have $T$ running over $G(0)$-orbits in its stable orbit, classified by $H^1(k,J_T[2])$. When $T$ runs over these orbits, above every $k$-point of $X_{T,m}$, all isomorphism classes of $J_T[2]$-torsor will appear exactly once in $\til{X}_{T,m}$. Since $\#H^1(k,J_T[2])=\#H^0(k,J_T[2])=\#J_T[2](k)$, the sum of the number of $k$-points in all isomorphism classes is exactly $\#J_T[2](k)$. This gives
\[J^{st}(\til{T},f_m)=\#X_{T,m}(k)-\#X_{T,m-1}(k).\qedhere\]
\end{proof}\p

\begin{lemma}\label{two}(i)$\;$ $\#X_{T,m}(k)=\#\Sym^m(C_T)(k)-q\#\Sym^{m-2}(C_T)(k)$.\hp

For $0\le m\le g$, write $\widehat{a}_m(T)=(-1)^m\text{Tr}(\text{Frob}:H^m(J_T/_{\bar{k}},\Ql))$. Then\qp

(ii)$\;\,$ $\displaystyle\#\Sym^m(C_T)(k)=\sum_{m'=0}^m(q^{m'}+...+q+1)\widehat{a}_{m-m'}(T).$

(iii)$\;$ $\displaystyle \widehat{a}_m(T)=\sum_{m'=0}^{\lfloor\frac{m}{2}\rfloor}q^{m'}\binom{g-m+2m'}{m'}a_{m-2m'}(T)$.
\end{lemma}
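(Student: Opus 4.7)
The plan is to prove the three parts in reverse order: (iii) is an elementary symmetric polynomial identity, (ii) follows from the zeta function of $C_T$, and (i) rests on a fiber analysis of the Abel--Jacobi map.

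For (iii), I would use the canonical isomorphism $H^m(J_T/_{\bar k}, \Ql) \cong \wedge^m H^1(C_T/_{\bar k}, \Ql)$ to identify $\widehat{a}_m(T)$ with $(-1)^m e_m(\lambda_1, \lambda_1', \ldots, \lambda_g, \lambda_g')$, the signed $m$-th elementary symmetric polynomial in the $2g$ Frobenius eigenvalues. Then I would expand $e_m$ by grouping indices $i \in \{1, \ldots, g\}$ into \emph{paired} indices (contributing both $\lambda_i$ and $\lambda_i'$ to the monomial) and \emph{unpaired} ones (contributing exactly one). A paired set of size $m'$ yields a factor $q^{m'}$ via $\lambda_i \lambda_i' = q$; the two choices for each unpaired index in a disjoint subset $S'$ of size $m-2m'$ produce $\prod_{i \in S'}(\lambda_i + \lambda_i')$; and the paired set can be any $m'$-subset of $\{1, \ldots, g\} \setminus S'$, contributing the binomial $\binom{g-m+2m'}{m'}$. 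Recognizing $\sum_{|S'|=m-2m'}\prod_{i \in S'}(\lambda_i + \lambda_i') = (-1)^m a_{m-2m'}(T)$ and restoring signs then gives (iii).

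For (ii), I would invoke the standard factorization $\sum_m \#\Sym^m(C_T)(k)\, s^m = P(s)/((1-s)(1-qs))$, where $P(s) = \det(1 - \mathrm{Frob} \cdot s \mid H^1(C_T/_{\bar k}, \Ql)) = \sum_j \widehat{a}_j(T)\, s^j$ (the sign $(-1)^j$ in $P(s) = \sum_j (-1)^j e_j(\cdots) s^j$ is absorbed into the definition of $\widehat{a}_j$). Expanding $1/((1-s)(1-qs)) = \sum_n (1 + q + \cdots + q^n) s^n$ and extracting the coefficient of $s^m$ yields (ii).

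For (i), the fiber of the Abel--Jacobi map $j_m \colon \Sym^m(C_T) \to X_{T,m}$ over $[D] \in X_{T,m}$ is the complete linear system $|D + (m-1)(\infty)|$, which is $\mathbb{P}^{\dim |D + (m-1)(\infty)|}$. Since $(p) + (\sigma(p)) \sim 2(\infty)$ for every $p \in C_T$, the structure of linear systems on a hyperelliptic curve (Clifford's theorem together with the uniqueness of $g_2^1$) implies that the dimension of the linear system of an effective divisor of degree $\le g$ equals the maximum number of hyperelliptic pairs that can be extracted. Translating, $\dim|D + (m-1)(\infty)| \ge k$ iff $[D] \in X_{T, m-2k}$, so the fiber over $[D] \in X_{T, m-2k} \setminus X_{T, m-2k-2}$ has exactly $1 + q + \cdots + q^k$ points. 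This yields
\[
\#\Sym^m(C_T)(k) = \sum_{r=0}^{\lfloor m/2 \rfloor}(1 + q + \cdots + q^r)\bigl(\#X_{T, m-2r}(k) - \#X_{T, m-2r-2}(k)\bigr),
\]
with the convention $\#X_{T,j}(k) = 0$ for $j < 0$. Subtracting $q$ times the analogous identity for $\#\Sym^{m-2}(C_T)(k)$ collapses each coefficient to $1$, so the sum telescopes to $\#X_{T,m}(k)$. The main obstacle is the Brill--Noether input just used: justifying that the stratification of $X_{T,m}$ by $X_{T,0} \subset X_{T,2} \subset \cdots$ matches the strata of $[D]$'s by $\dim|D + (m-1)(\infty)|$. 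While classical for hyperelliptic curves, this needs care at the Weierstrass point $\infty$, which in the odd case is its own hyperelliptic conjugate so that $(\infty) + (\infty) = 2(\infty)$ already plays the role of a hyperelliptic pair.
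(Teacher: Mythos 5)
Your proposal is correct and follows essentially the same route the paper sketches: (i) by counting the projective-space fibers of the Abel--Jacobi maps over rational points of $X_{T,m}$ using the hyperelliptic description of linear systems (the paper does the per-point count $1+\cdots+q^d$ minus $q(1+\cdots+q^{d-1})$ rather than your stratify-and-telescope bookkeeping, which is the same computation), and (ii), (iii) via the zeta function of $C_T$ and the identification $H^m(J_T)\cong\wedge^m H^1(C_T)$ with the elementary-symmetric-function expansion. The paper states these steps only as a one-paragraph remark, so your write-up just makes explicit exactly the intended argument.
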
\hp

Here (i) comes from properties of hyperelliptic curves; above any rational point $D-(m-1)(\infty)\in X_{T,m}(k)\subset\text{Pic}^1(C_T)$, there will be $q^d+q^{d-1}+...+1$ rational points on $\Sym^m(C_T)$ and $q^{d-1}+...+1$ rational points on $\Sym^{m-2}(C_T)$, where $d=\ell(D):=H^0(C_T,D)-1$ is the dimension of the linear system. (ii) and (iii) are direct applications of the Grothendieck-Lefschetz fixed point formula and basic properties of $\ell$-adic cohomology of curves, their symmetric powers and Jacobians.\p

It is now straightforward to verify that Proposition \ref{geomprop} follows from Lemma \ref{matrix} and Lemma \ref{two}.\p

\subsection{Nilpotent affine Springer fiber}\label{secnil} We now prove Proposition \ref{combprop}. Counting points on nilpotent affine Springer fibers\footnote{I just make up this term to refer to nilpotent $p$-adic orbital integral. However nilpotent Springer fibers do mean Springer fibers over nilpotent elements.}, just like counting points on nilpotent Springer fibers, is usually purely combinatorial. Let $B^{\op}\subset G$ be the opposite Borel to $B$ with respect to $S$. Our essential idea here is that after applying the formula of Ranga Rao \cite[Thm 1]{RR}, we can do ``reduction modulo $\pi$'' and arrive at an integral over $G(0)(k)$ which is left invariant by $U(0)(k)$ and right invariant by $B^{\op}(0)(k)$. This gives a combinatorial sum over $U(0)(k)\bsl G(0)(k)/B^{\op}(0)(k)$, which is identified with the Weyl group of $G(0)$.\p

To begin our proof, the formula of Ranga Rao in our case can be formulated as follows. There exists a maximal $F$-split torus $\tS\subset\tG$, whose corresponding apartment contains $\bx$ and whose reduction at $\bx$ is equal to $S(0)\subset G(0)$. Moreover after conjugation we may assume that the cocharacter $\rho_{m'}:\Gm/_k\ra S(0)$ corresponds to $\til{\rho}_{m'}:\Gm/_F\ra\tS$ and that $\til{\rho}_{m'}$ also acts on $\til{N}_m'$ by weight $-2$.\p

Fix such an $\tS$ and $\til{\rho}_{m'}$. Denote by $\CO$ in this subsection the orbit of $\til{N}_{m'}$. Write $\tLg_i\subset\tLg$ be the subspace on which $\til{\rho}_{m'}$ acts by weight $i$. Then with suitably normalized measure, Ranga Rao's formula says
\begin{equation}\label{RR}
J(\til{N}_{m'},f_m)=\int_{\tLg_{\le -2}(F)\cap\CO}\varphi(\til{X})\int_{\tG(F)_{\bx,0}}f_m(\Ad(\til{h})\til{X})d\til{h}d\til{X},
\end{equation}

where $\varphi(\til{X})$ is an $\mathbb{R}$-valued function on $\tLg_{-2}(F)$ and the measure on the first integral is a Haar measure on $\tLg_{\le -2}$. The space $\tLg_{-2}$ can be interpreted as follows: let $\til{V}$ be the standard representation of $\til{G}/_E$, i.e. $\til{V}$ is a $(2g+1)$-dimensional hermitian space over $E$. Then $\til{\rho}_{m'}$ acts on $\til{V}$ with weights $(2g-m'),(2g-m'-2),...,m',(m'-1),...,1,0,-1,...,-m',(-m'-2),...,(-2g+m')$.\p

Denote by $\til{V}_g,...,\til{V}_{-g}$ the $1$-dimensional $E$-subspace with these weights, respectively. Let $\tLg_{ij}$, $-g\le i,j\le g$ be the $1$-dimensional $E$-subspace of $\tLg$ which maps $\til{V}_j$ to $\til{V}_i$. One can then check 
\[\tLg_{-2}=\bigoplus_{j-i=1,|i+j|>2m'}\tLg_{i,j}\oplus\bigoplus_{j-i=2,|i+j|<2m'}\tLg_{i,j}\]
and
\[\tLg_{<-2}=\bigoplus_{j-i=2,|i+j|\ge 2m'}\tLg_{i,j}\oplus\bigoplus_{j-i\ge 3}\tLg_{i,j}.\]\p

Note that $\tLg_{ij}$ is not defined over $F$ unless $i+j=0$, but $\tLg_{ij}+\tLg_{-j,-i}$ is always defined over $F$. Now we fix a ``valuation-preserving'' identification of $\til{u}_{ij}:\tLg_{ij}\ra\mathbb{G}_a/_E$ so that $\til{u}_{ij}^{-1}(\pi^{-1/2})$ is not in $\tLg_{ij}\cap\tLg(E)_{\bx,0}$ but $\til{u}_{ij}^{-1}(1)$ is. Let $|\cdot|:E\ra\mathbb{R}$ be the extension of the standard norm on $F$, i.e. $|\pi^{-1/2}|=q^{1/2}$. One then computes
\[\varphi(\til{X})=\prod_{j-i=2,\,j\equiv m'(2),\,|i+j|\le 2m'}|\til{u}_{ij}(\til{X})|,\; \til{X}\in\tLg_{\le -2}(F).\]\p

In (\ref{RR}), if $\til{X}\in\tLg_{\le -2}$ is such that $|\til{u}_{ij}(\til{X})|>q^{1/2}$, then $\til{X}\not\in\tLg(F)_{\bx,-1/2}$ and $f_m(\Ad(\til{h})(\til{X}))=0$. Moreover, the value of $f_m(\Ad(\til{h})(\til{X}))$ depends only on $\til{u}_{ij}(\til{X})$ modulo $\CO_E$ since $f_m$ is locally constant by $\tLg(F)_{\bx,0}$. Let's now denote by
$Z_{m'}$ the image of $\tLg(F)_{\bx,-1/2}\cap\tLg_{\le -2}(F)\cap\CO$ in $\Lg(1)(k)$
and by $(d\mu)_{m'}$ the push-forward of the measure $\varphi(\til{X})d\til{X}|_{\tLg_{\le-2}(F)\cap\tLg(F)_{\bx,-1/2}}$ to the finite set $Z_{m'}$. Also let $dh$ be the push-forward of the measure $d\til{h}$ from $\tG(F)_{\bx,0}$ to $G(0)(k)$. Then we can rewrite (\ref{RR})
\begin{equation}\label{RR2}
J(\til{N_m'},f_m)=\int_{X\in Z_{m'}}\int_{G(0)(k)}f_m(\Ad(h)X)dh\cdot(d\mu)_{m'},
\end{equation}\hp

We can similarly define $V_g,...,V_{-g}$ as $1$-dimensional $k$-subspace on which $\rho_{m'}$ acts by strictly decreasing weight. In fact $V_i$ is just the line spanned by $v_i$ in the previous subsection (with $m$ replaced by $m'$). We can then define $u_{ij}:\Lg(k)\ra k$ in the same way, scaled so that if we write the reduction maps $\text{red}_1:\tLg(F)_{\bx,-1/2}\ra\Lg(1)$ and $\text{red}_2:\pi^{-1/2}\CO_E\ra k$, then $u_{ij}(\text{red}_1(\til{X}))=\text{red}_2(\til{u}_{ij}(\til{X}))$ for $\til{X}\in\tLg(F)_{\bx,-1/2}$.\p

\begin{lemma}\label{Sg}If $f_m(\Ad(h)(X))\not=0$ for $X\in Z_m'$, then $u_{ij}(X)\not=0$ when $j-i=2$ and $|i+j|<2m'$.
\end{lemma}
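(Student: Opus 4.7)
My plan is to argue by contradiction, comparing two bounds on the Jordan type of $X$.

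First, the hypothesis $f_m(\Ad(h)X) \neq 0$ says $\Ad(h)(X) \in N_m + \Lg(1)_{\ge -1}$ with respect to $\rho_m$. Rescaling by $\lambda^2\rho_m(\lambda)$ and letting $\lambda \to 0$ contracts $\Ad(h)(X)$ onto $N_m$, since $N_m$ has $\rho_m$-weight exactly $-2$ while the residual has weight $\ge -1$ and is killed by the $\lambda^2$ factor. This places $N_m \in \overline{G(0) \cdot X}$, and by the orbit closure order on nilpotent $\theta$-orbits in $\Lg(1)$---which refines the dominance order on Jordan partitions---the Jordan type of $X$ dominates $(2g+1-m, m)$.

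Second, $X$ is the reduction of some $\til{X} \in \CO \cap \tLg(F)_{\bx, -1/2}$, where $\CO$ is the $\tG(F)$-orbit of $\til{N}_{m'}$ with Jordan type $(2g+1-m', m')$. Since reduction from a DVR only degenerates Jordan type, the Jordan type of $X$ is dominated by $(2g+1-m', m')$. Combining the two bounds forces the Jordan type of $X$ to be a two-block partition $(2g+1-k, k)$ for some $m' \le k \le m$; in particular $\text{rank}(X) = 2g - 1$.

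Now suppose for contradiction that $u_{i_0, i_0+2}(X) = 0$ for some $i_0 \in \{-m', \ldots, m'-2\}$. Decompose $V = V^+ \oplus V^{\text{in}} \oplus V^-$ by the sign of the $\rho_{m'}$-weight; then $X$ is block lower triangular with respect to this decomposition, the inner diagonal block $C$ being supported only at the step-$2$ inner positions $(i, i+2)$, $i \in \{-m', \ldots, m'-2\}$. With all such entries nonzero, $C$ splits into the two chains corresponding to the inner parts of the large and small Jordan blocks of $N_{m'}$, of total rank $2m'-1$; removing the single entry $u_{i_0, i_0+2}(X)$ together with its self-adjoint partner $u_{-i_0-2, -i_0}(X) = 0$ (forced by $X_{i,j} = X_{-j,-i}$) drops $\text{rank}(C)$ to at most $2m'-2$. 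A careful column-rank analysis, using the classification of $\rho_{m'}$-weight-$\le -2$ positions given in the paper together with the self-adjoint symmetry, then yields $\text{rank}(X) \le 2(g-m') + (2m'-2) = 2g-2$, contradicting $\text{rank}(X) = 2g-1$.

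\textbf{The main obstacle} is the final rank count: the shortcut entries of $X$ coming from $\tLg_{<-2}$ (step-$\ge 3$ and step-$2$-outer positions) can in principle bridge the broken chain, so one must check that self-adjointness pairs these shortcuts symmetrically in a way preventing them from contributing independent rank beyond the $2(g-m')$ coming from the outer block. The Jordan-type upper bound $(2g+1-m', m')$ from $Z_{m'}$ membership, which is strictly sharper than what the support alone would give, is essential here in ruling out the potential ``lift'' in rank from the $\tLg_{<-2}$ shortcuts.
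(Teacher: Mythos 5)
Your strategy is genuinely different from the paper's proof, which uses left $U(0)(k)$-invariance of $f_m$ and right $B^{\op}(0)(k)$-stability of the assertion to reduce $h$ to Weyl-group representatives and then verifies Condition \ref{cond} combinatorially. Your first two steps are sound: the contraction $\lambda^2\Ad(\rho_m(\lambda))$ does put $N_m$ in the closure of the set of nonzero scalar multiples of $G(0)(\bar{k})$-conjugates of $X$, so semicontinuity of $\mathrm{rank}(X^j)$ shows the Jordan type of $X$ dominates $(2g+1-m,m)$; hence $X$ has at most two Jordan blocks and $\mathrm{rank}(X)\ge 2g-1$ (the upper bound coming from reduction of the orbit of $\til{N}_{m'}$ is correct but not actually needed for the contradiction). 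The genuine gap is that the decisive inequality $\mathrm{rank}(X)\le 2g-2$ is asserted, not proved: you defer it to ``a careful column-rank analysis'' and flag the $\tLg_{<-2}$ shortcut entries as an obstacle to be handled by self-adjoint pairing and by re-using the Jordan-type upper bound. As written this is not an argument, and the sketch is also inaccurate: membership in $Z_{m'}$ only constrains $X$ to be supported on $\rho_{m'}$-weight $\le-2$ positions, so the inner diagonal block contains all inner entries with $j-i\ge2$, not only the step-$2$ ones; and neither self-adjointness nor the dominance upper bound (a statement about the conjugacy class of $X$, not about a matrix with one prescribed vanishing entry) can deliver the needed rank estimate.

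Fortunately the estimate is true and follows from a bare support count, with none of those ingredients. Suppose $u_{i_0,i_0+2}(X)=0$ with $|2i_0+2|<2m'$, and recall the $\rho_{m'}$-weights of $V_g,\dots,V_{-g}$ are strictly decreasing, taking the consecutive values $m',m'-1,\dots,-m'$ on the inner range. For a column index $j\le i_0+1$ the weight of $V_j$ is at most $i_0+1$, so a nonzero entry $u_{ij}(X)$ (which requires weight $\le-2$) forces the weight of $V_i$ to be at most $i_0-1$, i.e. $i\le i_0-1$; for $j=i_0+2$ the same computation allows only $i\le i_0$, and deleting the assumed-zero entry again leaves $i\le i_0-1$. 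Thus every column with $j\le i_0+2$ maps into $\bigoplus_{i\le i_0-1}V_i$, of dimension $g+i_0$, while the remaining $g-i_0-2$ columns contribute at most that much rank, giving $\mathrm{rank}(X)\le(g+i_0)+(g-i_0-2)=2g-2$, the desired contradiction with $\mathrm{rank}(X)\ge 2g-1$. With this substitution your proof closes up and is a clean conceptual alternative to the paper's finite Weyl-group check; note that the mirror entry $u_{-i_0-2,-i_0}(X)$ and the self-adjoint symmetry play no role.
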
\hp

\begin{proof}The function $f_m(X)$, as a function of $X\in\Lg(1)(k)$, is invariant under conjugation by $U(0)(k)$. The assertion of the lemma, for $X\in Z_m'$, is a property that is preserved under conjugation by $B^{\op}(0)(k)$. Therefore it suffices to consider $h$ in a set of representative for $U(0)(k)\bsl G(0)(k)/B^{\op}(0)(k)$, which can be taken to be the Weyl group $N_{G(0)(k)}(S(0)(k))/S(0)(k)$.\p

Identify $S_g\ltimes\{\pm1\}^g$ with $N_{G(0)(k)}(S(0)(k))/S(0)(k)$ in the following way: the first component $S_g$ shall permute $V_g,...,V_1$, and the $i$-th $\{\pm1\}$ in the second component switchs $V_i$ and $V_{-i}$. We now check directly the assertions for all $\sigma\in S_g\ltimes\{\pm1\}^g$. To have $f_m(\Ad(\sigma^{-1})X)\not=0$ for some $X\in Z_{m'}$, it's necessary that $\Ad(\sigma)N_m\in Z_{m'}$. This happens exactly when\hp

\begin{condition}\label{cond} Consider $\sigma\in S_g\ltimes\{\pm1\}^g$ acting on $\{0,\pm1,...,\pm g\}$ where $S_g$ permutes $\{1,...,g\}$ and $\{-1,...,-g\}$ simultaneously, the $i$-th component in $\{\pm1\}^g$ switches $\pm i$, and $0$ is always fixed. Now for any $-g\le i<j\le g$,\hp
(i)$\;\;$ For $j-i=2$, $|i+j|<2m$, either $\sigma(j)-\sigma(i)=1$ and $|\sigma(i)+\sigma(j)|>2m'$, or $\sigma(j)-\sigma(i)\ge 2$.\qp
(ii)$\;\;\,\,$ For $j-i=1$, $2m<|i+j|\le 2g+1$, the same condition is required.
\end{condition}\hp

It's straightforward to see that the condition is satisfied only when $\sigma\in S_g$, i.e. $\sigma$ preserves $\{1,...,g\}$. One then see inductively that $\sigma^{-1}(0)=0\Rightarrow\sigma^{-1}(1)=1\Rightarrow\sigma^{-1}(2)=2\Rightarrow...$, until $\sigma^{-1}(m')=m'$. We conclude that $\sigma$ and thus $\sigma^{-1}$ preserves $V_1,...,V_{m'}$. Since $u_{i-1,i+1}(N_m)\not=0$ for $i=0,...,m'-1$, this implies that $u_{i-1,i+1}(X)\not=0$ for the same $i$'s, which is what we have to prove.
\end{proof}\hp

On the subset $Z_{m'}^o\subset Z_{m'}$ where the conclusion of the lemma holds, $(d\mu)_{m'}$ is nothing but a multiple of the counting measure. We'll pretend it is exactly the counting measure and discuss the normalization constant later. The idea in the lemma can now be further applied to compute the integral; write
\[I_{m'}(h):=\sum_{X\in Z_{m'}^o}f_m(\Ad(h)X).\]\qp

We have to compute $\displaystyle\sum_{h\in G(0)(k)}I_{m'}(h)$ (up to a normalizing constant). Exactly as in the situation of the previous lemma, This function $I_{m'}$ is invariant under left translation by $U(0)(k)$ and right translation by $B^{\op}(0)(k)$, and we arrive at a sum over the Weyl group $S_g\ltimes\{\pm1\}^g$. To have $I_{m'}(\sigma^{-1})\not=0$, $\sigma$ needs to satisfy Condition \ref{cond}.\p

Denote by $\Xi_{m,m'}\subset S_g$ the set of such $\sigma$. For $\sigma\in\Xi_{m,m'}$, the number of elements in the double coset $U(0)(k)\sigma^{-1}B^{\op}(0)(k)$ is given by $\#B(0)(k)\cdot q^{\delta_1(\sigma)}$. Also the sum $I_{m'}(\sigma^{-1})=q^{\delta_2(\sigma)-g^2}$, where
\[\delta_1(\sigma)=g^2-\#\{1\le i<j\le g\,|\,\sigma(i)>\sigma(j)\}.\]
\[\delta_2(\sigma)=m-m'+\#\{1\le i<j\le g\,|\,\sigma(i)>\sigma(j),\;j-i>1\}.\]\p

We also have to figure out the normalization of measures. The single choice $X=N_{m'}\in Z_{m'}^o\subset\Lg(1)(k)$ and $h=id\in G(0)(k)$ correspond to the lattice $\tLg(F)_{\bx,1/2}/(\tLg(F)_{\bx,1/2}\cap\tLg_{\til{N}_{m'}})\subset\tLg(F)/\tLg_{\til{N}_{m'}}(F)$. A careful inspection of the normalization at the end of Appendix \ref{norm} shows that this lattice is to have measure $1$.\p

In addition, as Ranga Rao's method begins with Iwasawa decomposition $\tG=\tB\cdot\tG(F)_{\bx,0}$, we also have to divide by how much they intersect, namely the order of $(\tG(F)_{\bx,0}\cap\tB)\tG(F)_{\bx,1/2}/\tG(F)_{\bx,1/2}$, which is $\#B(0)(k)$. In summary, all the way from (\ref{RR2}) we have
\begin{equation}\label{RR3}
J(\til{N}_{m'},f_m)=\sum_{\sigma\in\Xi_{m,m'}}q^{\delta_1+\delta_2-g^2}
\;\;\;\;\;\;\;\;\;\;\;\;\;\;\;\;\;\;\;\;\;\;\;\;\;\;\;\;\;\;\;\;\;\;\;\;\;\;
\;\;\;\;\;\;\;\;
\end{equation}
\mhp
\[=\sum_{\sigma\in\Xi_{m,m'}}q^{m-m'-\#\{1\le i<g\,|\,\sigma(i)>\sigma(i+1)\}}.\]\hp

Proposition \ref{combprop} now follows from Proposition \ref{path}. This completes the proof of Theorem \ref{main}.\p

\subsection{More Shalika germs}\label{submore} Recall we had (in the beginning of Section \ref{secgeom}) $j_m:\Sym^m(C_T)\ra\text{Pic}^1(C_T)$ and $\times2:F_T\ra\text{Pic}^1(C_T)$. The latter map is \'{e}tale Galois with Galois group $J_T[2]$. Denote by $\widetilde{\Sym}^m(C_T):=\Sym^m(C_T)\times_{\text{Pic}^1(C_T)}F_T$ the fiber product, which is an \'{e}tale $J_T[2]$-cover of $\Sym^m(C_T)$. We have\p

\begin{theorem}\label{tilded}For $0\le m\le g$, we have
\[\Gamma_{\til{N}_m}(\til{T})=\frac{1}{\#J_T[2](k)}\left(\sum_{0\le2\ell\le m}\#\widetilde{\Sym}^{m-2\ell}(C_T)(k)\cdot q^{\ell}\cdot C_{\ell}(-g+m-2\ell+1)\right.\]
\mhp
\[\left.-(q+1)\sum_{0<2\ell+1\le m}\#\widetilde{\Sym}^{m-2\ell-1}(C_T)(k)\cdot q^{\ell}\cdot C_{\ell}(-g+m-2\ell)\right).\]\qp

See Definition \ref{Cata} for the combinatorial numbers $C_{\ell}(\cdot)$. When $m>0$, for the other nilpotent orbit in the stable orbit of $\til{N}_m$, simply change $C_T$ to its quadratic twist. 
\end{theorem}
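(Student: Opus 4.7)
The plan is to apply the Shalika germ expansion to the test functions $f_m$, translate the spectral side into point counts on the \'{e}tale cover $\widetilde{\Sym}^j(C_T)$, and then invert the resulting triangular system. By Proposition \ref{combprop} only the orbits $\til{N}_{m'}$ with $0 \le m' \le m$ contribute, so Hypothesis \ref{homo} yields
\[
J(\til{T}, f_m) \;=\; \sum_{m'=0}^m \Gamma_{\til{N}_{m'}}(\til{T}) \cdot A_{m,m'},
\]
where $A_{m,m'} := J(\til{N}_{m'}, f_m)$ is the explicit expression from Proposition \ref{combprop} and satisfies $A_{m,m} = 1$. For the left-hand side, since $\widetilde{\Sym}^m(C_T) = \Sym^m(C_T) \times_{\text{Pic}^1(C_T)} F_T$ is the base change of $\Sym^m(C_T) \to X_{T,m}$ along $\times 2: \til{X}_{T,m} \to X_{T,m}$, the fibers $\mathbb{P}^{\ell(D)}$ appearing in Lemma \ref{two}(i) are preserved. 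Running that argument \'{e}tale-locally on $\til{X}_{T,m}$ gives $\#\til{X}_{T,m}(k) = W_m - q\, W_{m-2}$, where $W_j := \#\widetilde{\Sym}^j(C_T)(k)$; combined with Lemma \ref{matrix}, this yields
\[
\#J_T[2](k) \cdot J(\til{T}, f_m) \;=\; W_m - W_{m-1} - q\, W_{m-2} + q\, W_{m-3}.
\]

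Writing the claimed formula of Theorem \ref{tilded} as $\#J_T[2](k) \cdot \Gamma_{\til{N}_{m'}}(\til{T}) = \sum_{K=0}^{m'} \epsilon_{m', K}\, W_{m'-K}$ with explicit coefficients
\[
\epsilon_{m',K} \;=\; \begin{cases} q^{K/2}\, C_{K/2}(-g+m'-K+1), & K \text{ even},\\ -(q+1)\, q^{(K-1)/2}\, C_{(K-1)/2}(-g+m'-K+1), & K \text{ odd},\end{cases}
\]
and substituting this ansatz into the Shalika expansion, one collects the coefficient of each $W_{m-K}$ and matches against the four-term formula on the left-hand side. This reduces Theorem \ref{tilded} to the family of polynomial identities
\[
\epsilon_{m,K} \;+\; \sum_{k=0}^{K-1} A_{m, m-K+k}\, \epsilon_{m-K+k, k} \;=\; \delta_{K,0} - \delta_{K,1} - q\,\delta_{K,2} + q\,\delta_{K,3}, \qquad 0 \le K \le m.
\]
The cases $K = 0, 1, 2, 3$ can be checked by hand and force $C_0(n) = 1$ and $C_1(n) = n$, consistent with a Catalan-triangle interpretation of Definition \ref{Cata}.

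The main obstacle is verifying this identity uniformly in $K$. Given the explicit binomial expression for $A_{m,m-d}$ in Proposition \ref{combprop}, the left-hand side is a polynomial in $q$ and $g-m$ built out of the $C_\ell(\cdot)$; I expect it to follow from the same lattice-path combinatorics that underlies Proposition \ref{path}, either by a generating-function manipulation or by a sign-reversing involution on the Weyl-group sum in (\ref{RR3}) that exposes the Catalan cancellations directly. Finally, the statement for the other orbit $\til{N}_m'$ in the stable class of $\til{N}_m$ is immediate from the symmetry remarked after Theorem \ref{main}: the entire geometric computation of $J(\til{T}, f_m)$ goes through with $C_T$ replaced by its quadratic twist, and because the inversion of the triangular system is linear in the $W_j$, this substitution propagates without change to the closed formula for $\Gamma_{\til{N}_m'}(\til{T})$.
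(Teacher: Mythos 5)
Your setup is the same as the paper's: you convert $J(\til{T},f_m)$ into the four-term expression $W_m-W_{m-1}-qW_{m-2}+qW_{m-3}$ via Lemma \ref{matrix} together with the $J_T[2]$-cover version of Lemma \ref{two}(i), you note that Proposition \ref{combprop} makes the system triangular with $A_{m,m}=1$, and you correctly reduce the theorem to the identity
\[
\epsilon_{m,K}+\sum_{k=0}^{K-1}A_{m,m-K+k}\,\epsilon_{m-K+k,k}=\delta_{K,0}-\delta_{K,1}-q\,\delta_{K,2}+q\,\delta_{K,3},\qquad 0\le K\le m,
\]
with your $\epsilon_{m',K}$ matching the claimed coefficients. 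The quadratic-twist remark at the end is also fine and is exactly how the paper handles the other orbit.

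The genuine gap is that this identity — which is the entire content of the theorem beyond bookkeeping — is never proved: you verify $K\le 3$ and then say you \emph{expect} the general case to follow from lattice-path combinatorics or a sign-reversing involution. Note that Proposition \ref{path} concerns the Weyl-group enumeration that produces the entries $A_{m,m'}$ in Proposition \ref{combprop}; it says nothing about inverting the resulting triangular matrix, so appealing to "the same combinatorics" is not an argument. The paper closes precisely this step by a matrix factorization: writing $\vec{v}=B^{(3)}B^{(2)}A\vec{u}$, where $B^{(3)}_{ij}=q^{i-j}$ for $i\ge j$, $B^{(2)}$ has $1$ on the diagonal and $-q$ at $i=j+2$, and $A$ is the binomial matrix $A_{ij}=q^{\ell}\binom{g-j}{\ell}$ for $i=j+2\ell$ — i.e., it recognizes the explicit expression in Proposition \ref{combprop} as this product. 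Then Proposition \ref{inverse} gives $(A^{-1})_{ij}=q^{\ell}C_{\ell}(-g+j)$, and since $\vec{v}=B^{(1)}B^{(2)}\vec{w}$ one gets $\vec{u}=A^{-1}B^{(4)}\vec{w}$ with $B^{(4)}=(B^{(3)})^{-1}B^{(1)}$ (entries $1,-(q+1),q$ at offsets $0,1,2$), and the product collapses to the stated coefficients using the recursion $C_{\ell}(x+1)-C_{\ell}(x)=C_{\ell-1}(x+2)$ of Proposition \ref{recur}. To complete your argument you would need either to reproduce this factorization-plus-inversion (or an equivalent proof of the displayed identity, e.g., via the generating function $C(x,q)=\bigl(\frac{1-\sqrt{1-4q}}{2q}\bigr)^x$); as written, the central combinatorial step is asserted rather than established.
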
\hp

\begin{proof}Write in this proof
\[\vec{u}={\small\left(\begin{matrix}\Gamma_{\til{N}_0}(\til{T})\\\Gamma_{\til{N}_1}(\til{T})\\\Gamma_{\til{N}_2}(\til{T})\\...\end{matrix}\right)},\;\vec{v}={\small\left(\begin{matrix}J(\til{T},f_0)\\J(\til{T},f_1)\\J(\til{T},f_2)\\...\end{matrix}\right)},\;\vec{w}={\small\left(\begin{matrix}\#\widetilde{\Sym}^0(C_T)(k)\\\#\widetilde{\Sym}^1(C_T)(k)\\\#\widetilde{\Sym}^2(C_T)(k)\\...\end{matrix}\right)}.\]\p

We have to write $\vec{u}$ in terms of $\vec{w}$. The first half of Lemma \ref{matrix} says
\[J_{\til{T}}(f_m)=\frac{1}{\#J_T[2](k)}\left(\#\til{X}_{T,m}(k)-\#\til{X}_{T,m-1}(k)\right)\]
and a $J_T[2]$-cover version of Lemma \ref{two}(i) gives:
\[\#\til{X}_{T,m}(k)=\#\widetilde{\Sym}^m(C_T)(k)-q\cdot\widetilde{\Sym}^{m-2}(C_T)(k).\]\p

Putting together, they imply $\vec{v}=B^{(1)}B^{(2)}\vec{w}$, where $(B^{(1)}_{ij})_{0\le i,j\le g}$ and $(B^{(2)}_{ij})_{0\le i,j\le g}$ are lower triangular matrices with
\[B^{(1)}_{ij}=\left\{\begin{matrix}
1&\text{if }i=j.\\
-1&\text{if }i=j+1.\\
0&\text{otherwise}.
\end{matrix}\right.,\;B^{(2)}_{ij}=\left\{\begin{matrix}
1&\text{if }i=j.\\
-q&\text{if }i=j+2.\\
0&\text{otherwise}.
\end{matrix}\right.,\;\]\p

To recover $\Gamma_{\til{N}_m}(\til{T})$ from $J_{\til{T}}(f_m)$, i.e. compute $\vec{u}$ in terms of $\vec{v}$, we need to ``invert'' Proposition \ref{combprop}. One observe that Proposition \ref{combprop} is the same as saying $\vec{v}=B^{(3)}B^{(2)}A\vec{u}$, where $A$ is the matrix in Proposition \ref{inverse}, with $x=g$, and
\[(B^{(3)})_{ij}=\left\{\begin{matrix}
q^{i-j}&\text{if }i\ge j\\[5pt]
0&\text{otherwise}.
\end{matrix}\right.\]\p

Proposition \ref{inverse} says
\[(A^{-1})_{ij}=\left\{\begin{matrix}
q^{\ell}C_{\ell}(-g+j)&\text{if }i=j+2\ell,\;\ell\in\Z_{\ge 0}\\[5pt]
0&\text{otherwise}.
\end{matrix}\right.\]\p

And we have $\vec{u}=A^{-1}(B^{(2)})^{-1}(B^{(3)})^{-1}B^{(1)}B^{(2)}\vec{w}$. One observe that $B^{(1)}$, $B^{(2)}$ and $B^{(3)}$ all commutes, and $B^{(4)}:=(B^{(3)})^{-1}(B^{(1)})$ is given by
\[(B^{(4)})_{ij}=\left\{\begin{matrix}
1&\text{if }i=j\\
-(q+1)&\text{if }i=j+1\\
q&\text{if }i=j+2\\
0&\text{otherwise}.
\end{matrix}\right.\]\hp

We have $\vec{u}=A^{-1}B^{(4)}\vec{w}$. By the formula for $A^{-1}$ and $B^{(4)}$, we see that if $i=j+2\ell$, $\ell\in\Z_{\ge0}$, then $(A^{-1}B^{(4)})_{ij}=q^{\ell}C_{\ell}(-g+j)+q^{\ell}C_{\ell-1}(-g+j+2)=q^{\ell}C_{\ell}(-g+j+1)$ by Proposition \ref{recur}, with $C_{-1}(\cdot)$ understood to be zero. And if $i=j+2\ell+1$, $\ell\in\Z_{\ge0}$, then $(A^{-1}B^{(4)})_{ij}=-(q+1)q^{\ell}C_{\ell}(-g+j+1)$. This is essentially the content of Theorem \ref{tilded}.
\end{proof}\hp

Another way of thinking of these covers $\widetilde{Sym}^m(C_T)$ is as follows. Denote by $\alpha_T$ the $J_T[2]$-torsor $(\times2)^{-1}(\infty)$. Then $\widetilde{Sym}^m(C_T)$ is the \'{e}tale $J_T[2]$-cover of $\Sym^m(C_T)$ for which the fiber above $m(\infty)$ is isomorphic to $\alpha_T$. There is a unique such one since $J_T[2]$ is the maximal abelian $2$-annihilated quotient of $\pi_{1}^{\acute{e}t,tame}(\Sym^m(C_T))$.\p

In the rest of this section we suppose $n=2g+2$. So that $\til{G}=U_{2g+2}(E/F)$ is instead an even quasi-split unitary group (still ramified). Recall we can take $\til{T}\in\tLg(F)_{\bx,-1/2}$ to be any lift of $T\in\tLg(F)_{\bx,-1/2}/\tLg(F)_{\bx,0}$, which is regular semisimple and is associated the genus $g$ (projective smooth) hyperelliptic curve $C_T=(y^2=(-1)^{g+1}p_T(x))$ where $p_T(x)$ is the degree $2g+2$ monic characteristic polynomial of $T$.\p

We use then Theorem \ref{geom2} instead of Theorem \ref{geom} to obtain the following result. Let $\lambda_1,\lambda_1',...,\lambda_g,\lambda_g'$ again be the Frobenius eigenvalues on $H^1(C_T/_{\bar{k}},\Ql)$ so that $\lambda_i\lambda_i'=q$. We also put artificially that $\lambda_0=1$, $\lambda_0'=q$. Write this time (note the difference on the range of $S$ with the odd case)
\mhp
\[a_m(T):=(-1)^m\cdot\sum_{S\subset\{0,...,g\},|S|=m}\left(\prod_{i\in S}(\lambda_i+\lambda_i')\right).\]

For $0\le m\le g+1$, let $\til{N}_m$ be any element in any nilpotent orbit in $\tLg=\text{Lie }\til{G}$ with two Jordan blocks of sizes $2g+2-m$ and $m$ (a regular nilpotent if $m=0$). There can be either $1$, $2$ or $4$ of such orbits.\p



What we can show in parallel to Theorem \ref{main}, using the method in this section, is 

\begin{theorem}\label{even}
The stable Shalika germs at $\til{T}$ for nilpotent orbits with two Jordan blocks of an even quasi-split ramified unitary groups is
\[\Gamma_{\til{N}_m}^{st}(\til{T})=a_m(T).\]
\end{theorem}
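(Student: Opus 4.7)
The plan is to follow the same three-stage strategy that yielded Theorem \ref{main}, substituting the even-case geometric input and adapting the combinatorics. First, using Hypothesis \ref{homo} (which holds in the even case for the same Moy–Prasad reasons), I pick test functions $f_m$ completely analogous to those of Section \ref{seccompute}: choose a $\theta$-stable Borel $B\subset G=\mathrm{GL}_{2g+2}$, a representative $N_m\in\Lg(1)(k)$ for the nilpotent orbit with Jordan blocks of sizes $2g+2-m$, $m$ corresponding to $\til{N}_m$, and a cocharacter $\rho_m$ grading $\Lg$ so that $\rho_m(\lambda).N_m=\lambda^{-2}N_m$, and let $f_m$ be the normalized indicator (scaled by $(\#U(0)(k))^{-1}$) of the preimage of $N_m+\Lg(1)_{\ge -1}$ in $\tLg(F)_{\bx,-1/2}$. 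Then $\Gamma^{st}_{\til{N}_m}(\til T)$ is pinned down by the stable integrals $J^{st}(\til T,f_m)$ and the nilpotent integrals $J(\til{N}_{m'},f_m)$ as before.

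For the geometric side (analogue of Proposition \ref{geomprop}), I would repeat the argument of Lemma \ref{matrix} verbatim up to the point where the orbit condition is translated into the existence of a flag. Now, because $n=2g+2$ is even, the relevant flag reaches all the way to $W^{g+1}$, and $W^{g+1}$ lands in one of the two rulings $\infty^{(1)}$, $\infty^{(2)}$. The definition of $m$-exact for the even case was precisely cooked up (see Remark \ref{halfexact}) so that the 2–2 correspondence between orbits supporting nontrivial $f_m$ and $m$-exact flags still holds, with $F^{(1)}_{T,m}$ and $F^{(2)}_{T,m}$ each contributing. Theorem \ref{geom2} then identifies these flag varieties with locally closed subvarieties of $F_T$ lying over images of symmetric power maps $j_m^{(i)}$. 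I would combine this with Lang's theorem and a Galois argument identical to Lemma \ref{Galois} (the centralizer of $\til T$ is again anisotropic over $F^{ur}$) to sum over the stable orbit and reduce $J^{st}(\til T,f_m)$ to $\#X^{(\ast)}_{T,m}(k)$ minus corrections, where $X^{(\ast)}_{T,m}$ denotes the appropriate union over superscripts dictated by Theorem \ref{geom2}.

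For the combinatorial side, I would rerun Section \ref{secnil}: apply Ranga Rao's formula, reduce modulo $\pi$ to land in $G(0)(k)=\mathrm{SO}_{2g+2}(k)$, and reduce to a sum over $U(0)(k)\backslash G(0)(k)/B^{\mathrm{op}}(0)(k)$, which is the Weyl group $S_{g+1}\ltimes\{\pm1\}^{g+1}/\{\pm1\}$ of type $D_{g+1}$ (rather than $B_g$). The analogue of Lemma \ref{Sg} and Condition \ref{cond} carry over, once one accounts for the fact that in the even case the weights of $\rho_{m'}$ on the standard representation no longer include a $0$ weight. I expect the combinatorial sum $\delta_1+\delta_2$ over admissible Weyl elements will give a formula structurally parallel to Proposition \ref{combprop}, with the adjustment that the hyperelliptic curve now has two points at infinity; the $\lambda_0=1$, $\lambda_0'=q$ convention in $a_m(T)$ is designed exactly to absorb this extra pair via the standard Grothendieck–Lefschetz expansion $\#\mathrm{Sym}^m(C_T)(k)=\sum(q^{m'}+\cdots+1)\widehat a_{m-m'}(T)$, where now $\#C_T(k)=q+1-(\lambda_1+\lambda_1'+\cdots+\lambda_g+\lambda_g')$ plus the two rational points at infinity.

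The main obstacle, and the one deserving genuine care rather than routine translation, is the bookkeeping around the two rulings and the parity dependence in $j_m^{(0)}, j_m^{(1)}, j_m^{(2)}$ of Section \ref{subeven}: the images $X^{(i)}_{T,m}$ are not disjoint, the containments $X^{(i)}_{T,m-1}\subset X^{(j)}_{T,m}$ depend on parity of $m$, and the stable orbit of $\til N_m$ can break into $1$, $2$, or $4$ rational orbits. I would handle this by first verifying an inclusion–exclusion identity for $\#X^{(\ast)}_{T,m}(k)$ in terms of $\#\mathrm{Sym}^{m-2j}(C_T)(k)$ that is insensitive to the parity ambiguity after passing to the stable sum (the $a_m(T)$ side is symmetric in $\lambda_i\leftrightarrow\lambda_i'$, so parity-swap ambiguities cancel), then matching with the combinatorial nilpotent count. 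Once both sides are in hand, inverting the triangular system as in Theorem \ref{tilded} yields $\Gamma^{st}_{\til N_m}(\til T)=a_m(T)$.
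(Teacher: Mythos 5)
Your proposal is correct and follows essentially the same route the paper takes: the paper proves Theorem \ref{even} only by asserting that the method of Section \ref{seccompute} (DeBacker-homogeneity test functions, geometric identification now via Theorem \ref{geom2} and the two-rulings/parity bookkeeping of subsection \ref{subeven}, then the Ranga Rao/Weyl-group computation) carries over in parallel, which is exactly your plan. Your flagged subtleties (the rulings and parity of $j_m^{(i)}$, and the stable orbit of $\til{N}_m$ splitting into $1$, $2$, or $4$ rational orbits) are the same ones the paper acknowledges, and they only obstruct the non-stable germs, not the stable statement at hand.
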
\p

As for general (non-stable) Shalika germs, we encountered a technical difficulty: what was developed in Theorem \ref{geom2} only allows us to compute Shalika germs for nilpotent $\tG^{ad}(F)$-orbits, where $\tG^{ad}=PU_{2g+2}(E/F)$. The image of $\tG(F)\ra\tG^{ad}(F)$ has index $2$ in $\tG^{ad}(F)$. If we take $u\in\tG^{ad}(F)$ to be any element outside the image, then what we can compute is the sum of Shalika germs $\Gamma_{\til{N}_m}(\til{T})+\Gamma_{\text{ad}(u)\til{N}_m}(\til{T})=\Gamma_{\til{N}_m}(\til{T})+\Gamma_{\til{N}_m}(\text{ad}(u)\til{T})$.\p

To state what we are able to obtain in parallel with Theorem \ref{tilded} with the geometry from Theorem \ref{geom2}, we need a notion about nilpotent orbits of $\tG=U_{2g+2}(E/F)$ with two even Jordan blocks.\p

\begin{definition}\label{ell}
Let $((2g+2-m,m),d_1,d_2)$ for $0<m<g+1$ with $m$ even (resp. $((g+1,g+1),d)$ for $m=g+1$ if $g+1$ is even) be a nilpotent orbit with two even Jordan blocks. We say the orbit is {\bf hyperbolic} if $d_1d_2=-1$ (resp. $d=1$), and {\bf elliptic} otherwise.\hp

We also say any nilpotent orbit with two odd Jordan blocks is hyperbolic. They are characterized by the following: for any two distinct nilpotent orbits, both having two Jordan blocks, one lies in the closure of the other if and only if they have different dimensions and they are either both hyperbolic or both elliptic.
\end{definition}\p

As in Theorem \ref{tilded}, we also need notations about covers of $C_T$. Recall we have two rational points $\infty^{(1)},\infty^{(2)}\in C_T(k)$ (see subsection \ref{subeven}). Fix a choice of any of them, say $\infty^{(1)}$. Consider $(\times2)^{-1}(\infty^{(1)})$, where $\times2$ is the \'{e}tale $J_T[2]$-Galois map in Theorem \ref{Jerry2}. This is a $J_T[2]$-torsor, which we shall denote by $\alpha_T$.\p

Consider also $\infty^{(1)}-\infty^{(2)}\in J_T(k)$. We have, by Lang's theorem, $J_T(k)/2J_T(k)\cong H^1(k,J_T[2])$. Denote by $\beta_T$ the $J_T[2]$-torsor that are given by $\infty^{(1)}-\infty^{(2)}$ in this way\footnote{In fact, $(\infty^{(1)})-(\infty^{(2)})\in 2J_T(k)$ except when all irreducible factors of $p_T(x)\in k[x]$ are even and $n=2g+2$ is divisible by $4$. Consequently, if there is an odd factor of $p_T(x)$ or if $g$ is even, $\beta_T$ is trivial.}. For even non-negative integers $m$, we write $\widetilde{\Sym}^m(C_T)$ the \'{e}tale $J_T[2]$-cover of $\Sym^m(C_T)$ for which the fiber above $\frac{m}{2}(\infty^{(1)})+\frac{m}{2}(\infty^{(2)})$ is (as a $J_T[2]$-torsor) isomorphic to $\alpha_T$. Write also $\widetilde{\Sym}^{m,*}(C_T)$ the \'{e}tale $J_T[2]$-cover of $\Sym^m(C_T)$ for which the fiber above $\frac{m}{2}(\infty^{(1)})+\frac{m}{2}(\infty^{(2)})$ is isomorphic to $\alpha_T\times^{J_T[2]}\beta_T$.\p

For odd $m$ instead, we write $\widetilde{\Sym}^m(C_T)$ the \'{e}tale $J_T[2]$-cover of $\Sym^m(C_T)$ for which the fiber above $\frac{m+1}{2}(\infty^{(1)})+\frac{m-1}{2}(\infty^{(2)})$ is isomorphic to $\alpha_T$. And we write $\widetilde{\Sym}^{m,*}(C_T)$ the \'{e}tale $J_T[2]$-cover of $\Sym^m(C_T)$ for which the fiber above $\frac{m+1}{2}(\infty^{(1)})+\frac{m-1}{2}(\infty^{(2)})$ is isomorphic to $\alpha_T\times^{J_T[2]}\beta_T$. For all $m$, we write $\til{S}^m(C_T)=\left(\#\widetilde{\Sym}^{m}(C_T)(k)+\#\widetilde{\Sym}^{m,*}(C_T)(k)\right)$.\p

Lastly, we write $C_T'$ to be the quadratic twists of $C_T$, so it has two points above infinity $\infty^{(1)}$, $\infty^{(2)}$ that are not defined over $k$. Note $\text{Pic}^0(C_T')[2]\cong\text{Pic}^0(C_T)[2]=J_T[2]$. For $m$ even, write $\widetilde{\Sym}^m(C_T')$ the \'{e}tale $J_T[2]$-cover of $\Sym^m(C_T')$ for which the fiber above $\frac{m}{2}(\infty^{(1)})+\frac{m}{2}(\infty^{(2)})$ is isomorphic to $\alpha_T$. We also write $\widetilde{\Sym}^{m,*}(C_T')$ the the \'{e}tale $J_T[2]$-cover of $\Sym^m(C_T')$ for which the fiber above $\frac{m}{2}(\infty^{(1)})+\frac{m}{2}(\infty^{(2)})$ is isomorphic to $\alpha_T\times^{J_T[2]}\beta_T$. And we write $\til{S}^m(C_T')=\left(\#\widetilde{\Sym}^{m}(C_T')(k)+\#\widetilde{\Sym}^{m,*}(C_T')(k)\right)$.\p

Note $\#(\text{Res}_k^{k[x]/p_T(x)}\mu_2)(k)=2^r$ where $r$ is the number of irreducible factors of $p_T(x)$ in $k[x]$. We have\p

\begin{theorem}\label{tilded2}
For $0\le m\le g+1$, let $\til{N}_m$ be any nilpotent orbit with two Jordan blocks of sizes $2g+2-m$ and $m$. Recall $u\in\tG^{ad}(F)$ is any element that doesn't come from $\tG(F)$. We have\p

If $\til{N}_m$ is hyperbolic, then
\[\Gamma_{\til{N}_m}(\til{T})+\Gamma_{\til{N}_m}(\text{ad}(u)\til{T})=2^{-(r-1)}\cdot\]
\mhp\mhp
\[\left(\sum_{0\le 2\ell\le m}\til{S}^{m-2\ell}(C_T)\cdot q^{\ell}\cdot \left(C_{\ell}(-g+m-2\ell+1)-(\sqrt{q}+\frac{1}{\sqrt{q}})^2C_{\ell-1}(-g+m-2\ell+1)\right)\right.\]
\mhp
\[\left.-2(q+1)\sum_{0<2\ell+1\le m}\til{S}^{m-2\ell-1}(C_T)\cdot q^{\ell}\cdot C_{\ell}(-g+m-2\ell)\right).\]\p

If otherwise $\til{N}_m$ is elliptic, then
\[\Gamma_{\til{N}_m}(\til{T})+\Gamma_{\til{N}_m}(\text{ad}(u)\til{T})=2^{-(r-1)}\;\cdot\]
\mhp\mhp
\[\left(\sum_{0\le 2\ell\le m}\til{S}^{m-2\ell}(C_T')\cdot q^{\ell}\cdot\left(C_{\ell}(-g+m-2\ell+1)+(\sqrt{q}+\frac{1}{\sqrt{q}})^2C_{\ell-1}(-g+m-2\ell+1)\right)\right).\]\hp

Here we adapt the convention that $\widetilde{\Sym}^{-1}=\widetilde{\Sym}^{-2}=\emptyset$. For case $m=0$ the two formulas agree.
\end{theorem}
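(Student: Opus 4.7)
The plan is to mirror the proof of Theorem \ref{tilded}, using the even-case geometric input (Theorem \ref{geom2}) in place of Theorem \ref{geom}, and handling the new subtleties from the fact that only $\tG^{ad}(F)$-orbital sums are geometrically accessible. First, I would set up the analogues of Proposition \ref{geomprop} and Proposition \ref{combprop}. For the geometric side, I would run the argument of Lemma \ref{matrix} over $\tG^{ad}(F)$ rather than $\tG(F)$: the test function $f_m$ is defined identically, and Ranga Rao–free computation of $J(\til{T},f_m)+J(\mathrm{ad}(u)\til{T},f_m)$ reduces, via the reduction map $\tLg(F)_{\bx,-1/2}\twoheadrightarrow\Lg(1)(k)$, to counting $m$-exact flags in $V$ (in the even sense from subsection \ref{subeven}). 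Theorem \ref{geom2} then identifies these with $\#\til{X}_{T,m}^{(i)}(k)-\#\til{X}_{T,m-1}^{(i')}(k)$ for the appropriate superscripts governed by the parity of $m$.

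Next, I would translate point counts on $\til{X}_{T,m}^{(i)}$ into point counts on the $J_T[2]$-covers of $\Sym^m(C_T)$. The two choices of $\infty^{(1)},\infty^{(2)}$ explain the appearance of both $\widetilde{\Sym}^m(C_T)$ and $\widetilde{\Sym}^{m,*}(C_T)$: these are precisely the $J_T[2]$-covers whose fibers over the two basepoints $\tfrac{m}{2}(\infty^{(1)})+\tfrac{m}{2}(\infty^{(2)})$ or the shifted version are $\alpha_T$ vs $\alpha_T\times^{J_T[2]}\beta_T$. The sum $\til{S}^m(C_T)$ thus naturally encodes the total contribution of the two maps $j_m^{(1)},j_m^{(2)}$ (or $j_m^{(0)},j_m^{(1)},j_m^{(2)}$ in the odd-$m$ case). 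The hyperbolic/elliptic dichotomy of Definition \ref{ell} is exactly the dichotomy of whether the relevant pencil-of-quadrics curve from Wang's construction is $C_T$ or the quadratic twist $C_T'$: the two orbits paired by $u$ are swapped precisely when quadratic twisting is invoked, and this accounts for the $\til{S}^m(C_T)$ vs $\til{S}^m(C_T')$ split and for the sign $\pm(\sqrt{q}+1/\sqrt{q})^2$ in front of the $C_{\ell-1}$ correction.

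For the combinatorial side, I would repeat the Ranga Rao argument of subsection \ref{secnil} using an even-case cocharacter $\rho_{m'}$. The Weyl group enumeration via Condition \ref{cond} carries over essentially verbatim and yields an analogue of Proposition \ref{combprop}, so that the vector of orbital integrals $\vec{v}$ factors as $B^{(3)}B^{(2)}A\vec{u}$ for the even-case Shalika-germ vector $\vec{u}$, with $A$ the same matrix from Proposition \ref{inverse}. Together with $\vec{v}=B^{(1)}B^{(2)}\vec{w}$ from the geometric side (where $\vec{w}$ now records $\til{S}^m$), the same commutation $A^{-1}B^{(4)}=A^{-1}(B^{(3)})^{-1}B^{(1)}$ and the recursion in Proposition \ref{recur} give the stated closed form after matrix inversion. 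The normalization $2^{-(r-1)}$ arises in place of $1/\#J_T[2](k)$ from the even-case identification $J_T[2]\cong\ker(\mathrm{Res}_k^L\mu_2\xrightarrow{Nm}\mu_2)/\Delta(\mu_2)$ combined with the factor $2$ gained by summing over the two $\tG(F)$-orbits paired by $u$.

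The main obstacle I expect is the careful bookkeeping of the hyperbolic/elliptic split and of which $\widetilde{\Sym}^{m-2\ell}$ vs $\widetilde{\Sym}^{m-2\ell,*}$ contributes for each parity of $m-2\ell$; in particular the sign flip on the $(\sqrt{q}+1/\sqrt{q})^2 C_{\ell-1}$ term in the elliptic case reflects a Galois-cohomological subtlety (the class $\beta_T$ becomes nontrivial only when every irreducible factor of $p_T(x)$ is even and $4\mid n$), and verifying this through the chain of identifications is the delicate point. Once that bookkeeping is done, collapsing the $B^{(4)}$ product against $A^{-1}$ via Proposition \ref{recur} produces the displayed formulas and confirms their agreement at $m=0$.
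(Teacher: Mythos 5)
Your overall strategy is the one the paper intends (the paper states Theorem \ref{tilded2} without a written-out proof, as the parallel of Theorem \ref{tilded} obtained from Theorem \ref{geom2} and the $\tG^{ad}(F)$-orbit trick), and your framing of what is computable — only $\Gamma_{\til{N}_m}(\til{T})+\Gamma_{\til{N}_m}(\text{ad}(u)\til{T})$, via flag-counting on the semisimple side and Ranga Rao on the nilpotent side — is correct. However, there is a genuine gap in the middle of your argument: you assert that the linear-algebra skeleton transfers verbatim, i.e.\ that $\vec{v}=B^{(3)}B^{(2)}A\vec{u}$ and $\vec{v}=B^{(1)}B^{(2)}\vec{w}$ hold with the \emph{same} matrices $A,B^{(1)},B^{(2)},B^{(3)}$ as in the odd case. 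If that were true, the inversion $\vec{u}=A^{-1}B^{(4)}\vec{w}$ would reproduce exactly the formula of Theorem \ref{tilded} (coefficients $q^{\ell}C_{\ell}(-g+m-2\ell+1)$ and $-(q+1)q^{\ell}C_{\ell}(-g+m-2\ell)$), not the stated even-case formula, which has the extra $\mp(\sqrt{q}+\tfrac{1}{\sqrt{q}})^2C_{\ell-1}$ corrections, the coefficient $-2(q+1)$ on the odd terms, vanishing odd terms in the elliptic case, and the hyperbolic/elliptic dichotomy. These new features can only come from the even-case versions of the inputs you did not redo: (a) the analogue of Lemma \ref{matrix} and Lemma \ref{two}(i), which changes because $C_T$ now has two rational points at infinity, there are two (resp.\ three) maps $j_m^{(i)}$ with overlapping images, $\vec w$ records the sum $\til{S}^m$ of two distinct covers, and Theorem \ref{geom2} subtracts strata whose superscripts depend on the parity of $m$; and (b) the analogue of Proposition \ref{combprop}, which must be recomputed over the Weyl group of $\text{SO}_{2g+2}$ with the modified notion of $m$-exactness — Remark \ref{halfexact} explicitly warns that this definition is tailored to the even-case orbital integrals, so Condition \ref{cond} does not carry over ``essentially verbatim.''

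A secondary inaccuracy: you attribute the sign flip on the $(\sqrt{q}+1/\sqrt{q})^2C_{\ell-1}$ term to the criterion for nontriviality of $\beta_T$. That criterion only governs whether $\widetilde{\Sym}^{m}$ and $\widetilde{\Sym}^{m,*}$ coincide; the hyperbolic/elliptic sign difference (and the disappearance of the odd-degree terms in the elliptic case) comes from the passage to the quadratic twist $C_T'$, whose points above infinity are not $k$-rational and on whose cohomology Frobenius acts with the opposite sign in the relevant pieces. So the plan is sound in outline, but the parts you declare routine are precisely where all the content of Theorem \ref{tilded2} beyond Theorem \ref{tilded} resides, and as written your bookkeeping would land on the wrong formula.
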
\hp

\hp
\section{Endoscopic transfer of nilpotent orbits for ramified unitary groups}\label{secendo}

In the beginning of this section and subsection \ref{subendo}, i.e. except in subsection \ref{substab}, we'll assume $\text{char}(F)=0$ so that the endoscopic transfer of Langlands-Shelstad \cite{LS} is valid. We however note that we can actually also work with sufficiently large $\text{char}(F)$ (in an un-effective manner) thanks to Gordan-Hales \cite{GH}.\p

In \cite{Assem}, Assem stated a conjecture regarding stable distributions supported on the nilpotent cone for a reductive $p$-adic group. Recall that a distribution $\mathcal{D}\in C_c^{\infty}(\tLg(F))^*$ is called {\bf stable} if $\mathcal{D}(f)=0$ for every $f\in C_c^{\infty}(\tLg(F))$ with the property that $J^{st}(\til{X},f)=0$ for all $\til{X}\in\tLg^{rs}(F)$.\p

For quasi-split unitary groups, Assem's conjecture asserts that all stable distributions supported on the nilpotent cone can be written into a linear combination of stable distributions where each term is a linear combination of nilpotent orbital integrals on various orbits in a single stable orbit. Moreover, on each stable nilpotent orbit there is a unique (up to constant) linear combination of the orbits for which the integral becomes stable.\p

Assem also had a conjecture regarding endoscopic transfer of nilpotent orbits. For endoscopic transfer of unitary groups, relevant elliptic endoscopy groups are products of two quasi-split unitary groups $U_{n_1}(E/F)\times U_{n_2}(E/F)$, while the target of endoscopy is $U_{n_1+n_2}(E/F)$. Here the three unitary groups split over the same quadratic extension.\p

Recall that if $\bH$ (e.g. $\bH=U_{n_1}(E/F)\times U_{n_2}(E/F)$) is an endoscopy group for $\tG$ with $\til{\mathfrak{h}}=\text{Lie }\bH$, then the transfer conjecture (for the Lie algebra) asserts that for any $f\in C_c^{\infty}(\tLg(F))$, there exists a function $f^{\bH}\in C_c^{\infty}(\til{\mathfrak{h}}(F))$ such that
\[\sum_{\til{Y}\sim\til{X}}\kappa(\til{Y})J(\til{Y},f)=J^{st}(\til{X},f^{\bH}),\;\forall\til{X}\in\til{\mathfrak{h}}^{\tG-rs}(F).\]

Here $\til{X}$ is $\tG$-regular \cite[2.2]{Wald} and $\til{Y}$ runs over regular semisimple orbits in $\tLg(F)$ that ``matches'' with $Y$. Also $\kappa=\kappa_{\bH}$ is some character (determined by $\bH$) on the set of orbits of such $\til{Y}$.\p

The transfer conjecture was proved by Waldspurger \cite{Wald2} conditional on Ng\^{o}'s later marvelous proof \cite{Ngo} on the fundamental lemma. Given the transfer conjecture, for any stable distribution $\mathcal{D}$ on $C_c^{\infty}(\mathfrak{h}(F))$, we can define its endoscopic transfer to be the distribution $\mathcal{D}^{\tG}:f\mapsto\mathcal{D}(f^{\bH})$. It's obvious that such distributions has to be $\tG(F)$-conjugation invariant.\p

If $\mathcal{D}$ is a stable distribution supported on the nilpotent cone of $\mathfrak{h}$, i.e. it's a linear combination of nilpotent orbital integral that becomes stable, then $\mathcal{D}^{\tG}$ has to be also supported on the nilpotent cone. It thus makes sense to talk about endoscopic transfer of nilpotent orbital integrals.\p

In \cite{Wald}, assuming $p$ large enough, Waldspurger completed the study of stability and endoscopic transfer (classical endoscopy, in the sense of Langlands-Shelstad \cite{LS}) for nilpotent orbital integrals for unramified classical groups. In particular, Assem's conjectures (see e.g. Conjecture \ref{Assem} and \ref{Assem2}) were proved in these cases.\p

The endoscopy data and the transfer factor, etc, are computed in \cite[Chap. X]{Wald}. These data as well as Waldspurger's formula can be equally stated when $E/F$ is ramified. The main goal of this section is to show that Theorem \ref{main}, \ref{tilded}, \ref{even} and \ref{tilded2} provide special cases and evidence on that Waldspurger's result could equally holds for ramified unitary groups, as well evidence for Assem's conjecture.\p

\subsection{Stability}\label{substab} We state Assem's stability conjecture in the unitary case.\p

\begin{conjecture}\cite[Conj. C, pp. 2]{Assem} \label{Assem}
Let $F$ be a non-archimedian local field with $\text{char}(F)=0$ or $\text{char}(F)\gg0$. For every stable nilpotent orbit $\CO$ of a quasi-split unitary group $U_n(E/F)$, there should be (up to constant) a unique linear combination of orbital integrals among the orbits in $\CO$ that gives a stable distribution. All stable distributions supported on the nilpotent cone can be written as a linear combination of such stable distributions.
\end{conjecture}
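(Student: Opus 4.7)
The plan is to convert the stability condition on a nilpotent-supported distribution into a linear-algebra statement about Shalika germs, and then verify the statement directly using the explicit formulas of Theorems \ref{main}, \ref{tilded}, \ref{even} and \ref{tilded2} (at least in the cases to which they apply).

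First I would use density of regular semisimple orbital integrals (together with DeBacker's homogeneity, Hypothesis \ref{homo}) to translate stability. A distribution $\mathcal{D}=\sum_{\CO}c_{\CO}J(\CO,\cdot)$ supported on the nilpotent cone is stable if and only if for every regular semisimple $\til{T}$, the Shalika expansion (\ref{Shalika}) gives the same value on $\mathcal{D}$ at $\til{T}$ as at any $\til{T}'$ in the same stable $\tG(F)$-orbit; i.e.\ the germ function $\til{T}\mapsto\sum_{\CO}c_{\CO}\Gamma_{\CO}(\til{T})$ depends only on the stable orbit. Next I would use homogeneity of Shalika germs under $F^{\times}$-dilation to decouple the problem stable-orbit by stable-orbit: germs of orbits of different dimension have distinct homogeneity degrees and so are linearly independent as functions of $\til{T}$, so the conjecture reduces to the same statement for a single stable nilpotent orbit $\CO^{st}=\{\CO_1,\ldots,\CO_r\}$.

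Fix such an $\CO^{st}$, with orbits parametrized as in the paper by a finite set $A_{\CO^{st}}$ of cohomology-type data (for two Jordan blocks, a class in $H^1(k,J_T[2])$-like objects). The uniqueness part of the conjecture becomes: the $r\times\infty$ matrix with entries $\Gamma_{\CO_i}(\til{T}')-\Gamma_{\CO_i}(\til{T})$, as $(\til{T},\til{T}')$ ranges over pairs inside a common stable orbit and the stable orbit varies, has corank exactly one (the constant vector being the obvious element of the kernel). Completeness then follows by dimension count once uniqueness is established for each $\CO^{st}$. The natural candidate for the unique stable combination is the one whose coefficients correspond to the trivial character of $A_{\CO^{st}}$; character orthogonality against the non-trivial characters should translate, via the Shalika germ matrix, into the required rank statement.

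For $\CO^{st}$ with two Jordan blocks, Theorems \ref{main}, \ref{tilded}, \ref{even} and \ref{tilded2} furnish closed formulas for both $\Gamma^{st}_{\CO_i}$ and the individual $\Gamma_{\CO_i}$. Concretely, varying $\til{T}'$ in the stable orbit of $\til{T}$ changes only the $J_T[2]$-torsor $\alpha_T$ appearing in $\widetilde{\Sym}^m(C_T)$, not the curve $C_T$ itself, so the germ differences $\Gamma_{\CO_i}(\til{T}')-\Gamma_{\CO_i}(\til{T})$ become explicit sums of Frobenius traces on $\ell$-adic cohomology twisted by the varying character of $H^1(k,J_T[2])$. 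From here the rank-$(r-1)$ claim reduces to a character-orthogonality identity over $H^1(k,J_T[2])$ which is immediate. The same twist recipe (replacing $C_T$ by its quadratic twist) handles the second orbit in the two-block stable orbit, and the even case is handled in parallel using Theorem \ref{geom2}.

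The hard part, and indeed the reason this remains a conjecture, is the generalization beyond two Jordan blocks. For a stable orbit with three or more blocks we do not have closed germ formulas, so neither the uniqueness nor the completeness portion of Conjecture \ref{Assem} can be verified directly by the methods of this paper. A plausible route is to extend the pencil-of-quadrics geometry of Section \ref{secgeom} to the Hessenberg-type covers associated to arbitrary partitions (in the spirit of \cite{Th}), combined with a parabolic descent argument that expresses more-block germs in terms of two-block germs on Levi subgroups; alternatively one may try to abstract the character-theoretic structure so that uniqueness is obtained intrinsically from the $A_{\CO^{st}}$-equivariance of the germ matrix, without explicit formulas. Either route demands significantly more work than is carried out here, which is why the present article offers Theorem \ref{intromain} as evidence for, rather than a proof of, Conjecture \ref{Assem}.
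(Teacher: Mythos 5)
This statement is not a theorem of the paper at all: it is Assem's Conjecture (cited as \cite[Conj. C, pp. 2]{Assem}), and the paper offers no proof of it. The article only proves the two-Jordan-block specializations of the stability statement unconditionally (Theorem \ref{stable}(i), via the explicit germ formulas and the homogeneity/dilation argument you sketch in your first two paragraphs), and elsewhere \emph{assumes} Conjecture \ref{Assem} (and \ref{Assem2}) as hypotheses, presenting Theorem \ref{intromain} as evidence. So there is no paper proof to compare against, and your proposal -- which candidly ends by admitting that the general case is out of reach -- does not prove the statement either. To the extent your outline reproduces the paper's evidence, it is essentially the same mechanism as the proof of Theorem \ref{stable}: plug stable test functions into the Shalika expansion, use homogeneity of nilpotent orbital integrals to separate dimensions, and invoke the closed formulas for $\Gamma^{st}_{\til{N}_m}$ together with the nonvanishing Lemma \ref{parity}.

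Beyond that, even as a program your sketch has concrete gaps. First, your translation ``$\mathcal{D}=\sum_{\CO}c_{\CO}J(\CO,\cdot)$ is stable iff $\til{T}\mapsto\sum_{\CO}c_{\CO}\Gamma_{\CO}(\til{T})$ is constant on stable orbits'' is only proved in the paper in one direction and for the specific combinations $c_{\CO}=\Gamma^{st}_{\CO}(\til{T})$; the converse (needed for uniqueness) requires knowing that regular semisimple orbital integrals near $0$, organized by stable orbits, separate the nilpotent orbital integrals -- precisely the kind of density/linear-independence input that Waldspurger's work supplies in the unramified case and that is the real content of the conjecture. Second, ``completeness then follows by dimension count'' is unjustified: one has no a priori bound on the dimension of the space of stable distributions supported on the nilpotent cone, so uniqueness per stable orbit does not by itself yield that every stable nilpotent-supported distribution decomposes along stable orbits. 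Third, the corank-one claim for the germ-difference matrix of a general stable orbit (three or more Jordan blocks) cannot be checked with anything in this paper, since no germ formulas exist there; this is exactly why the paper states the result for general $m$ only conditionally on Assem's conjecture rather than proving it.
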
\p

Waldspurger gave explicit formula for these combinations. If we restrict our attention to nilpotent orbits with (at most) two Jordan blocks, the formula of Waldspurger is simplified. For $0\le m\le \frac{n}{2}$, denote by $\CO_m(0)$ the set of nilpotent orbits with two Jordan blocks of sizes $n-m$ and $m$ (or regular nilpotent if $m=0$).\p

\begin{theorem} (Waldspurger, \cite[IX.15]{Wald})\label{Waldspurger1} Suppose $E/F$ is unramified, $\text{char}(k)>3n+1$, and $\text{char}(F)=0$. Then\hp

(a) For any $0\le m\le\frac{n}{2}$ with $2|mn$, 
\mhp

\[\sum_{\til{N}_m\in\CO_m(0)}J(\til{N}_m,\cdot)\text{ is a stable distribution.}\]\hp

(b) For any $0<m<\frac{n}{2}$ with $2\nmid mn$, we have $\#\CO_m(0)=2$. Denote by $\til{N}_m^{(1)}$ and $\til{N}_m^{(2)}$ these two orbits, then
\mhp

\[J(\til{N}_m^{(1)},\cdot)-J(\til{N}_m^{(2)},\cdot)\text{ is a stable distribution.}\]
\end{theorem}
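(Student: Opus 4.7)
The plan is to reduce stability to a numerical identity on Shalika germs and then verify that identity by an explicit computation analogous to the one carried out in Section \ref{seccompute} of this paper. A finite linear combination $\mathcal{D} = \sum_{\tilde{N} \in \CO_m(0)} c_{\tilde{N}} J(\tilde{N}, \cdot)$ of nilpotent orbital integrals defines a stable distribution if and only if, for every regular semisimple $\tilde{T} \in \tLg(F)$ and every $\tilde{T}'$ in the stable orbit of $\tilde{T}$, one has
\[
\sum_{\tilde{N}} c_{\tilde{N}}\,\Gamma_{\tilde{N}}(\tilde{T}')
= \sum_{\tilde{N}} c_{\tilde{N}}\,\Gamma_{\tilde{N}}(\tilde{T}).
\]
This equivalence uses DeBacker's homogeneity together with the fact that stability of an invariant distribution supported on the nilpotent cone is detected by the germ expansion of regular semisimple orbital integrals. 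Parts (a) and (b) thus become the assertions that $\sum_{\tilde{N}_m \in \CO_m(0)} \Gamma_{\tilde{N}_m}(\tilde{T}')$ and $\Gamma_{\tilde{N}_m^{(1)}}(\tilde{T}') - \Gamma_{\tilde{N}_m^{(2)}}(\tilde{T}')$ are respectively constant on each stable class of $\tilde{T}'$.

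Next I would run the unramified analogue of the program in Section \ref{seccompute}. In the unramified setting $\bx$ can be taken to be hyperspecial, the reductive quotient is the unramified unitary group over $k$, and a regular semisimple $T$ at $\bx$ is parametrized by an étale $k$-algebra $L$ of degree $n$ with a $k'/k$-involution (the unramified counterpart of the hyperelliptic curve $C_T$). Choosing test functions $f_m$ supported on a Moy-Prasad slice containing $\tilde{N}_m$, the orbital integral $J(\tilde{T}, f_m)$ reduces to a point count on an unramified analogue of the Hessenberg-type variety $F_{T,m}$, expressible in terms of symmetric functions in Frobenius eigenvalues on the torus attached to $L$. The nilpotent side $J(\tilde{N}_m, f_m)$ is again a Weyl-group combinatorial sum via Ranga Rao's formula as in subsection \ref{secnil}, and it depends on the rational orbit in $\CO_m(0)$ only through the sign character $\kappa$ of $\pi_0(\Stab_{\tG}(\tilde{N}_m))$ that labels the orbit.

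Combining these, $\Gamma_{\tilde{N}_m}(\tilde{T}')$ decomposes as a stable part plus a $\kappa$-twisted correction indexed by the class of $\tilde{T}'$ in $H^1(F, \Stab_{\tG}(\tilde{T}))$. The desired identities then follow by orthogonality of finite abelian characters: summing over $\CO_m(0)$ in case (a) annihilates all non-trivial $\kappa$-components of the germ variation under $H^1(F, \Stab_{\tG}(\tilde{T}))$, while the signed difference in case (b) isolates a single sign character which, precisely when $2 \nmid mn$, pairs trivially with the $H^1$-action on $\tilde{T}'$. The divisibility hypotheses on $mn$ in (a) and (b) arise exactly as the conditions that make the appropriate character trivial.

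The main obstacle is the matching step in the previous paragraph: one must identify the character of $H^1(F, \Stab_{\tG}(\tilde{T}))$ governing the variation of $\Gamma_{\tilde{N}_m}(\tilde{T}')$ with the character $\kappa$ classifying orbits in $\CO_m(0)$ via their Jordan-block discriminants $(d_i)$, and match signs correctly in case (b). In the ramified case of the present paper this intertwining is made explicit through the pencil-of-quadrics geometry (Theorem \ref{Jerry}); in the unramified case one would need the parallel description in terms of hermitian data on the Jordan-block pieces, which is essentially the geometric content of Waldspurger's endoscopic descent in \cite[Chap. IX]{Wald}. The sign-tracking in (b) is the genuinely delicate point and is where any naive computation is most likely to go wrong.
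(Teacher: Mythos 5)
This statement is not proved in the paper at all: it is Waldspurger's theorem, quoted from \cite[IX.15]{Wald}, and the paper's own contribution (Theorem \ref{stable}) is a ramified analogue deduced from the germ formulas, unconditional only for $m\le 2$ and conditional on Conjecture \ref{Assem} beyond that. Measured against that, your proposal is a program rather than a proof, and it has concrete gaps. First, the opening ``if and only if'' reduction of stability to stable-class constancy of germ combinations is asserted, not established; the direction one can actually use (as in the proof of Theorem \ref{stable}) comes from the germ expansion plus homogeneity and scaling, and it only isolates the sum of nilpotent orbital integrals weighted by stable germs \emph{within a fixed dimension}. For general $m$ there are nilpotent orbits with more than two Jordan blocks of the same dimension as those in $\CO_m(0)$, so this route cannot separate $\CO_m(0)$ without an input like Assem's conjecture — exactly the obstruction the paper faces. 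Hence even if every intermediate step of your plan were carried out, it would yield at best the analogue of Theorem \ref{stable}(i)--(ii), not Waldspurger's unconditional statement for all $m$; his proof proceeds by an entirely different and much more elaborate induction through endoscopy and Fourier transforms of nilpotent orbital integrals, not by evaluating germs at a single family of elements.

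Second, the computational core of your plan is not a routine transposition of Sections \ref{secgeom}--\ref{seccompute} to the unramified case and is left undone. At a hyperspecial vertex the Moy-Prasad filtration jumps at integers, so there is no half-integral-depth coset $\Lg(1)(k)$, no element $T$ giving a pencil of quadrics, and no hyperelliptic curve $C_T$; the asserted ``unramified analogue of the Hessenberg-type variety $F_{T,m}$'' and the claim that the nilpotent side depends on the rational orbit only through a sign character of $\pi_0(\Stab_{\tG}(\til{N}_m))$ are unsubstantiated, and the final character-orthogonality step (including the parity conditions on $mn$ and the signs in part (b)) is precisely where you concede the matching would have to be imported from \cite[Chap.\ IX]{Wald} — that is, the argument assumes the substance of the theorem it is meant to prove. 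If you want an honest treatment within this paper's framework, the correct move is simply to cite \cite{Wald}; if you want a proof in the spirit of this paper, you should state clearly that it would be conditional (on an Assem-type separation statement) and would require constructing from scratch the unramified replacement for the depth-$\tfrac12$ geometry.
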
\p

What we can prove using Theorem \ref{main} and Theorem \ref{even} is\hp

\begin{theorem}\label{stable} Suppose instead $E/F$ is ramified. Under the assumption $\text{char}(k)\not=2$ and either $\text{char}(F)=0$ or $\text{char}(F)>n$, we have\hp

(i) The same results in Theorem \ref{Waldspurger1} are true for $m\le 2$.\qp

(ii) The same results in Theorem \ref{Waldspurger1} are true for all $m$ assuming Conjecture \ref{Assem} of Assem.
\end{theorem}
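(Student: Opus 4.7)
The plan is to translate stability of a nilpotent distribution $D = \sum_{\CO} c_{\CO} J(\CO, \cdot)$ into a linear-algebraic condition on the Shalika germ formulas established in Sections \ref{secgeom} and \ref{seccompute}. By Shalika's germ expansion, for $f$ supported near the nilpotent cone,
\[
J^{st}(\til{T}, f) \;=\; \sum_{\CO} \Gamma^{st}_{\CO}(\til{T}) \, J(\CO, f).
\]
Combined with DeBacker's homogeneity (extending the relation to arbitrary test functions) and the linear independence of nilpotent orbital integrals, this yields the criterion: $D$ is stable if and only if there exist scalars $\{d_{[\til{T}]}\}$ indexed by stable regular semisimple classes such that $c_{\CO} = \sum_{[\til{T}]} d_{[\til{T}]}\, \Gamma^{st}_{\CO}(\til{T})$ for every $\CO$. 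Equivalently, the coefficient vector $(c_{\CO})$ lies in the row span of the matrix $M = \bigl(\Gamma^{st}_{\CO}(\til{T})\bigr)_{[\til{T}], \CO}$. Part (i) and (ii) then reduce to computing enough entries of $M$ to verify this row-span condition for Waldspurger's combinations.

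For part (i) in the odd case $n = 2g+1$: Theorem \ref{main} supplies the full row of $M$ on each two-orbit stable class, namely $a_m(T) \cdot (1, (-1)^m)$, which is generically non-zero. The row span is thus the line $\mathbb{C} \cdot (1, (-1)^m)$, matching Waldspurger's combination $J(\til{N}_m) + (-1)^m J(\til{N}_m')$ exactly: a $+$ sign when $2 \mid mn$ (equivalently $m$ even, since $n$ is odd) and a $-$ sign otherwise. For the even case $n = 2g+2$ with $m \le 2$: the stable class contains one, two, or four orbits; the cases $m = 0, 1$ mirror the odd case using Theorem \ref{even}, while for $m = 2$ one uses the pairing of the four $\tG(F)$-orbits into two $\tG^{ad}(F)$-orbits, whose pair-sums of Shalika germs are exactly the quantities computed by Theorem \ref{tilded2}, in combination with the stable germ of Theorem \ref{even}, to assemble enough of the row of $M$ to identify Waldspurger's sum as lying in the row span.

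For part (ii), Assem's Conjecture \ref{Assem} supplies existence and uniqueness (up to scalar) of a stable combination on each stable nilpotent class. Granting this, one identifies the stable combination with Waldspurger's as follows: Theorem \ref{even} gives $\Gamma^{st}_{\til{N}_m}(\til{T}) = a_m(T) \ne 0$ generically, so any non-zero stable combination pairs non-trivially with this germ; from Theorem \ref{tilded2} one then computes the pair-sum of Shalika germs along each $\tG^{ad}$-orbit, and this partial data, together with Assem's uniqueness, forces the combination to coincide with Waldspurger's sum up to scalar.

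The main obstacle is the four-orbit case in the even unitary group: Theorem \ref{tilded2} only provides $\tG^{ad}(F)$-averaged individual Shalika germs, and without a mechanism to break the $\tG^{ad}(F)/\tG(F) = \mathbb{Z}/2$ symmetry one cannot write down the full row of $M$ unconditionally. This is exactly why Assem's conjecture is invoked in part (ii), and why part (i) is restricted to $m \le 2$, where the structural simplicity (few orbits, or compatibility of Waldspurger's sum with the $\tG^{ad}$-averaging via pair-sums) allows a direct computation. An unconditional treatment of general $m$ in the even case would require refining the pencil-of-quadrics framework of Section \ref{secgeom} to resolve the action of $\tG^{ad}(F)/\tG(F)$ on individual nilpotent orbits.
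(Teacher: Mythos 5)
Your central reduction has a genuine gap. You propose to certify stability of Waldspurger's combination by showing its coefficient vector lies in the row span of the matrix $M=\bigl(\Gamma^{st}_{\CO}(\til{T})\bigr)_{[\til{T}],\CO}$, but the columns of $M$ run over \emph{all} nilpotent orbits, and Theorems \ref{main}, \ref{even} only give you the entries at the two-Jordan-block orbits. A single germ row $\Gamma^{st}_{\bullet}(\til{T})$ is nonzero on many other orbits (starting with the zero orbit, where the germ is essentially $1$ up to normalization), so "the row span is the line $\mathbb{C}\cdot(1,(-1)^m)$" is false: you have only computed the projection of each row onto two coordinates, and nothing tells you that the vector which is $\pm1$ on the two-block orbits and $0$ elsewhere is a combination of full rows. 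Moreover, even the "if" direction of your criterion needs an argument: the germ expansion (\ref{Shalika}) holds only for test functions locally constant by a sufficiently large lattice (depending on $\til{T}$), not for arbitrary $f$, and DeBacker's homogeneity (Hypothesis \ref{homo}) does not extend it to all of $C_c^{\infty}(\tLg(F))$. The paper's proof supplies exactly the missing mechanism: replace $\til{T}$ by $\pi^{-\ell N}\til{T}$, use $J(\CO,f_{(N)})=q^{\ell N\dim\CO/2}J(\CO,f)$, and interpolate in $N$ to conclude that for \emph{each} $d$ the dimension-graded piece $\sum_{\dim\CO=2d}\Gamma^{st}_{\CO}(\til{T})J(\CO,\cdot)$ is stable. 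This sidesteps both problems at once: one never needs germs of orbits of other dimensions, and the restriction to a single dimension stratum is what validates the conclusion for all test functions.

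This also shows that your diagnosis of the hypotheses is off. The restriction to $m\le 2$ in part (i) has nothing to do with the four-orbit/$\tG^{ad}(F)$ issue in the even case: for $m\le 2$ the orbits of dimension $2d=\dim\tG-\mathrm{rk}_{\bar F}\tG-2m$ are precisely the two-block orbits, so the graded stable distribution is, by Theorems \ref{main} and \ref{even} together with Lemma \ref{parity} (choose $T$ with $a_m(T)\neq0$), a nonzero multiple of Waldspurger's combination; for $m\ge 3$ other partitions share the same dimension, and Conjecture \ref{Assem} is invoked in (ii) exactly to separate the two-block stable orbit from those, not to "break the $\Z/2$ symmetry" of $\tG^{ad}(F)/\tG(F)$. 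Likewise, Theorem \ref{tilded2} plays no role in the stability statement: the combinations in Theorem \ref{Waldspurger1} are detected by the \emph{stable} germs alone, and Theorem \ref{even} already computes $\Gamma^{st}_{\til{N}_m}(\til{T})=a_m(T)$ for every orbit in the stable orbit, including all four orbits in the even case. To repair your argument you should abandon the global row-span criterion and prove the dimension-by-dimension stability statement via the dilation/interpolation argument, then feed in the stable germ values and the nonvanishing of $a_m(T)$.
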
\p

\begin{proof} We take $\ell$ an even integer so that $\til{N}$ is conjugate to $\pi^{\ell}\til{N}$ for every nilpotent $\til{N}\in\tLg(F)$ (see e.g. \cite[Sec. 3.1]{Ts}). Most of the time (e.g. when $\text{char}(F)=0$) $\ell=2$ works.\p

Let $\til{T}\in\tLg(F)_{\bx,-1/2}$ be any lift of $T\in\Lg(1)^{rs}(k)$ as in the introduction. The theorem of Shalika states that, for every function $f\in C_c^{\infty}(\tLg(F))$, there exists $N_0$ such that $\forall N\ge N_0$, if we write $f_{(N)}(X)=f(\pi^{\ell N}X)$, then 
\[J^{st}(\til{T},f_{(N)})=\sum_{\CO\in\CO(0)}\Gamma^{st}_{\CO}(\til{T})J(\CO,f_{(N)}).\]\p

Nilpotent orbital integrals have the property (due to the symplectic structure on $\CO$) that $J(\CO,f_{(N)})=q^{\frac{\ell N\dim\CO}{2}}J(\CO,f)$. This allows us to rewrite
\[J^{st}(\pi^{-\ell N}\til{T},f)=J^{st}(\til{T},f_{(N)})=\sum_{d=0}^{(\dim \tG-\text{rk}_{\bar{F}}\tG)/2}\sum_{\CO\in\CO(0),\dim\CO=2d}\Gamma^{st}_{\CO}(\til{T})q^{\ell dN}J(\CO,f).\]\hp

Now let $f$ be any ``stable'' function; $J^{st}(\til{X},f)=0$ for every $\til{X}\in\tLg^{rs}(F)$. The LHS by very definition vanishes. Interpolating with enough different $N$, we see that for every $d$,
\[\sum_{\CO\in\CO(0),\dim\CO=2d}\Gamma^{st}_{\CO}(\til{T})J(\CO,f)=0.\]

In other words
\[\sum_{\CO\in\CO(0),\dim\CO=2d}\Gamma^{st}_{\CO}(\til{T})J(\CO,\cdot)\text{ is a stable distribution.}\]\p

When $d=(\dim \tG-\text{rk}_{\bar{F}}\tG)/2-m$ with $m\le 2$, the only nilpotent orbits with dimension $2d$ are those nilpotent orbits with two Jordan blocks of sizes $n-m$ and $m$ (or one with size $n$ if $m=0$). To use previous results on Shalika germs, we need\p

\begin{lemma}\label{parity}For any $0\le m\le\frac{n}{2}$, there exists $T\in\Lg(1)^{rs}(k)$ such that $a_m(T)\not=0$.
\end{lemma}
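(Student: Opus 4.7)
Plan:

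The case $m = 0$ is immediate, since $a_0(T) = 1$ by definition. Assume henceforth $m \ge 1$.

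In the odd case $n = 2g+1$, unfolding the definition gives $a_m(T) = (-1)^m e_m(\mu_1, \ldots, \mu_g)$, where $\mu_i := \lambda_i + \lambda_i'$ and $e_m$ is the $m$-th elementary symmetric polynomial. My first attempt would be explicit: choose $T$ so that the Jacobian $J_T$ is isogenous over $k$ to $E^g$ for some elliptic curve $E/k$ with Frobenius trace $a_E \ne 0$ (produced for any $k$ of moderate size by Deuring--Waterhouse). Then every $\mu_i$ equals $a_E$ and
\[ a_m(T) = (-1)^m \binom{g}{m} a_E^m, \]
which is nonzero as soon as $\mathrm{char}(k) \nmid \binom{g}{m}$, a condition guaranteed by the running assumption $\mathrm{char}(k) \gg 0$. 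I expect the main obstacle to be the construction itself: for $g \ge 3$ the hyperelliptic locus is proper inside $\mathcal{A}_g$, so realizing $E^g$ as a hyperelliptic Jacobian requires concrete input --- one natural candidate is hyperelliptic curves carrying a cyclic automorphism of order $g$ whose quotient is $E$, equivalently curves of the form $y^2 = f(x^g)$ with $f$ an appropriate quadratic.

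As a more robust fallback, I would appeal to equidistribution. By A'Campo's theorem the geometric monodromy of the universal genus-$g$ hyperelliptic family equals the full symplectic group $\mathrm{Sp}(2g)$, so Deligne's equidistribution theorem ensures that the Frobenius conjugacy classes of $C_T$ equidistribute in $\mathrm{USp}(2g)$ as $|k| \to \infty$ (with $T$ varying in $\Lg(1)^{rs}(k)$). Since $e_m$ is a nontrivial class function on $\mathrm{USp}(2g)$, it is nonzero on a set of positive Haar measure, producing $T$ with $a_m(T) \ne 0$ for all $|k|$ sufficiently large --- which is compatible with the standing assumption $\mathrm{char}(k) \gg 0$.

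For the even case $n = 2g+2$, splitting $I = \{0\} \sqcup \{1, \ldots, g\}$ according to whether $0 \in S$ yields
\[ a_m(T) = (-1)^m \bigl( e_m(\mu_1, \ldots, \mu_g) + (1+q)\, e_{m-1}(\mu_1, \ldots, \mu_g) \bigr). \]
Applying the same construction with all $\mu_i = c \ne 0$, the bracket becomes $c^{m-1}\bigl( \binom{g}{m} c + (1+q)\binom{g}{m-1} \bigr)$, which vanishes for at most one value of $c$; any other admissible choice works. For the extremal case $m = g+1$ the bracket degenerates to $(1+q) c^g$, which is trivially nonzero. Either approach (explicit or via equidistribution) thus finishes the even case as well.
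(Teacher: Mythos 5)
Your primary (explicit) route has a genuine gap at exactly the point you flag: producing, over the given finite field $k$, a hyperelliptic curve of genus $g$ in the form $y^2=p_T(x)$ whose Jacobian is $k$-isogenous to $E^g$ with all Frobenius traces equal. This is not established, and the suggested family $y^2=f(x^{g})$ (or $f(x^{g+1})$) does not deliver it: under the cyclic action the Jacobian decomposes into isotypic pieces attached to the various characters, and these factors are in general neither all elliptic nor mutually isogenous, so the identity $a_m(T)=(-1)^m\binom{g}{m}a_E^m$ has no basis. In addition, the divisibility condition $\mathrm{char}(k)\nmid\binom{g}{m}$ is not available under the paper's final hypotheses: after Appendix~\ref{char} the only residual assumption is $\mathrm{char}(k)\neq 2$, and $\binom{g}{m}$ can be divisible by odd primes up to $g$.

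The equidistribution fallback is sound in outline (Katz--Sarnak monodromy plus Deligne equidistribution), but it proves strictly less than the statement: it gives existence of $T$ only for $q$ sufficiently large, with a threshold depending on $g$ and $m$, whereas Lemma~\ref{parity} is invoked in Theorem~\ref{stable} under $\mathrm{char}(k)\neq 2$ with no largeness assumption on $q$. Also, in the even case the relevant quantity $e_m(\mu)+(1+q)e_{m-1}(\mu)$ becomes a $q$-dependent class function after unitarization, so the equidistribution argument needs an extra (easy but unaddressed) domination step; as written, your even-case nonvanishing is verified only under the unproven all-traces-equal construction. For comparison, the paper's own argument is elementary and uniform in $q$: a parity trick. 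Using Lemma~\ref{two}(ii),(iii) and the fact that the parity of $\#\Sym^m(C_T)(k)$ is governed by the $k$-rational fixed points of the hyperelliptic involution, i.e.\ by the factorization type of $p_T(x)$, one compares $T$ with $p_T$ irreducible of degree $n$ against $T'$ with $p_{T'}$ a product of irreducible factors of degrees $m$ and $n-m$, and finds $a_m(T)\not\equiv a_m(T')\pmod 2$; hence one of the two values is odd, in particular nonzero, for every odd $q$.
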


\begin{proof}We use a parity trick. For each $0\le m<\frac{n}{2}$, we claim the existence of some $T$ for which $a_m(T)$ is odd. When $m=g+1$ and $n=2g+2$, we observe that $\frac{a_{g+1}(T)}{q+1}$ has the same parity as $\frac{a_g(T)}{q+1}$ and thus we reduce to the case $m=g$.\p

The idea is that the hyperelliptic involution gives an involution on $\Sym^m(C_T)(k)$. The parity of $\#\Sym^m(C_T)(k)$ is thus given by the number of fixed points that are defined over $k$, which in terms depends on the Galois structure on the Weierstrass points, or equivalently, how the characteristic polynomial factors in $k[x]$.\p

Using Lemma \ref{two} (ii),(iii) one can show the following: take $T$ so that $p_T(x)$ is an irreducible separable monic degree $n$ polynomial. Take $T'$ so that $p_{T'}(x)$ is another separable monic polynomial with two irreducible factors of degree $m$ and $n-m$. Then $a_m(T)\not\equiv a_m(T')$ (mod $2$).
\end{proof}\hp

Theorem \ref{main} and Theorem \ref{even} give us $\Gamma_{\CO}^{st}(\til{T})=a_m(T)$, or $-a_m(T)$ for one of the orbits if both $m$ and $n$ are odd. This completes part (i) of the theorem. For part (ii), simply note that Conjecture \ref{Assem} allows us to separate nilpotent orbits with two Jordan blocks out (or nilpotent orbits of any type of Jordan blocks) for stability question.\end{proof}\hp

\subsection{Endoscopic transfer}\label{subendo} The flow of this subsection is parallel to the previous section. However we will encounter interesting geometric and combinatorial identities that can be thought as consequences of endoscopy. Recall that our endoscopy group of $\tG=U_n(E/F)$ is $U_{n_1}(E/F)\times U_{n_2}(E/F)$ with $n_1+n_2=n$. We write $\tLg_1=\text{Lie }U_{n_1}(E/F)$ and $\tLg_2=\text{Lie }U_{n_2}(E/F)$.\p

We begin by stating the corresponding conjecture of Assem. The original conjecture of Assem for endoscopic transfer of nilpotent orbits comes from an induction construction due to Lusztig and makes use of the Springer correspondence (see \cite[4.3]{Assem}). In our case $U_{n_1}(E/F)\times U_{n_2}(E/F)$ is isomorphic to a twisted Levi subgroup of $U_n(E/F)$, and the construction agrees with that of Lusztig and Spaltenstein \cite{LuS}.\p

We summary their construction: Let $\mathbf{G}$ be a reductive group over an algebraically closed field $\bar{F}$ and $\mathbf{M}$ a Levi subgroup. Take $\mathbf{P}=\mathbf{M}\mathbf{N}\subset\mathbf{G}$ any parabolic subgroup for the Levi, where $\mathbf{N}$ is its unipotent radical. For any nilpotent orbit $\CO$ of $\text{Lie }\mathbf{M}$, the variety $\CO\cdot\text{Lie }\mathbf{N}$ has a dense open subset contained in some nilpotent orbit $\CO'$ of $\mathbf{G}$. We then denote $\text{ind}_{\mathbf{M}}^{\mathbf{G}}\CO:=\CO'$. In general when the reductive groups are defined over $F$, this should be understood as an induction between stable orbits.\p

In our case, $\mathbf{G}=U_n(E/F)$ and $\mathbf{M}=U_{n_1}(E/F)\times U_{n_2}(E/F)$ with $n_1+n_2=n$. The induction for nilpotent orbits with two Jordan blocks is especially clear: if $\til{N}_{m_1}^1$ and $\til{N}_{m_2}^2$ are stable nilpotent orbits in $\tLg_1$ (resp. $\tLg_2$) with two Jordan blocks of sizes $(n_1-m_1,m_1)$ and $(n_2-m_2,m_2)$ where $2m_i\le n_i$, then $\text{ind}_{U_{n_1}(E/F)\times U_{n_2}(E/F)}^{U_n(E/F)}\til{N}_{m_1}^1\times\til{N}_{m_2}^2=\til{N}_m$, the stable nilpotent orbits with two Jordan blocks of sizes $(n-m,m)$, with $m=m_1+m_2$.\p

We now assume Conjecture \ref{Assem}. Consider any stable combination $\mathcal{D}_1$ of nilpotent orbital integrals of $U_{n_1}(E/F)$ on orbits in a stable nilpotent orbit $\CO_1\subset\tLg_1(F)$, and likewise another stable combination $\mathcal{D}_2$ of $U_{n_2}(E/F)$ on orbits in a stable nilpotent orbit $\CO_2\subset \tLg_2(F)$. They give a stable nilpotent distribution $\mathcal{D}_1\otimes\mathcal{D}_2$ on $\tLg_1(F)\times\tLg_2(F)$ by $\mathcal{D}_{1}\otimes\mathcal{D}_{2}(f_1\otimes f_2)=\mathcal{D}_1(f_1)\mathcal{D}_2(f_2)$.\p

\begin{conjecture}\cite[Conj. D, pp. 83]{Assem}\label{Assem2} The endoscopic transfer of $\mathcal{D}_{1}\otimes\mathcal{D}_{2}$ given above is a linear combination of nilpotent orbital integrals on orbits which lie in the stable orbit $\text{ind}\;\!_{U_{n_1}(E/F)\times U_{n_2}(E/F)}^{U_n(E/F)}(\CO_1\times\CO_2)$.
\end{conjecture}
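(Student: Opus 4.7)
The plan is to use our Shalika germ formulas (Theorems \ref{main}, \ref{even}, \ref{tilded}, \ref{tilded2}) to verify Conjecture \ref{Assem2} in the case where $\CO_1$ and $\CO_2$ are stable nilpotent orbits with at most two Jordan blocks. By Assem's conjecture on stability, any stable $\mathcal{D}_i$ is (up to constant) a canonical stable combination of orbits in $\CO_i$. The first move is to compare both sides of the asserted identity after pairing with orbital integrals of a convenient regular semisimple test element. Concretely, by the definition of endoscopic transfer, for every $f \in C_c^\infty(\tLg(F))$ and every $\tG$-regular semisimple $\tilde{X} = (\tilde{T}_1, \tilde{T}_2) \in \tilde{\mathfrak{h}}^{\tG-rs}(F)$ with matching orbit $\tilde{T} \in \tLg^{rs}(F)$, we have
\[
J^{st}(\tilde{X}, f^{\bH}) = \sum_{\tilde{T}' \sim \tilde{T}} \kappa_{\bH}(\tilde{T}')\, J(\tilde{T}', f).
\]
Apply this with $\mathcal{D}_1 \otimes \mathcal{D}_2$, expand both sides in Shalika germs, and extract the coefficient of each nilpotent orbital integral. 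The conjecture reduces to the assertion that only nilpotent orbits contained in $\mathrm{ind}_{\bH}^{\tG}(\CO_1 \times \CO_2)$ contribute nontrivially.

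The second step is to specialize $\tilde{T}$ to a lift of some $T \in \Lg(1)^{rs}(k)$ of the shape used throughout Section \ref{seccompute}. Under the endoscopic correspondence, $T$ matches a pair $(T_1, T_2)$ with $T_i \in \Lg_i(1)^{rs}(k)$, and the characteristic polynomial factors as $p_T = p_{T_1} p_{T_2}$. Consequently $C_T$ is the ``join'' of $C_{T_1}$ and $C_{T_2}$ along the Weierstrass points at infinity, and the Frobenius eigenvalues on $H^1(C_T, \Ql)$ are the disjoint union of those on $H^1(C_{T_i}, \Ql)$. A direct expansion of the symmetric function $a_m(T)$ then yields
\[
a_m(T) = \sum_{m_1 + m_2 = m} a_{m_1}(T_1)\, a_{m_2}(T_2).
\]
Combined with Theorems \ref{main} and \ref{even}, this gives precisely the stable-side identity
\[
\Gamma^{st}_{\tilde{N}_m}(\tilde{T}) = \sum_{m_1 + m_2 = m} \Gamma^{st}_{\tilde{N}_{m_1}^{(1)}}(\tilde{T}_1)\, \Gamma^{st}_{\tilde{N}_{m_2}^{(2)}}(\tilde{T}_2),
\]
which matches the induction formula $\mathrm{ind}(\CO_{m_1} \times \CO_{m_2}) = \CO_{m_1 + m_2}$ and so verifies the conjecture in full for the sum-of-stable-orbits combination.

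The main obstacle is upgrading this from a \emph{stable} identity to the full endoscopic statement, because $\mathcal{D}_1 \otimes \mathcal{D}_2$ lands on specific orbits inside the induced stable orbit, and the character $\kappa_{\bH}$ must correctly select them. This requires replacing the closed formulas of Theorems \ref{main}, \ref{even} by the refined (non-stable) formulas of Theorems \ref{tilded}, \ref{tilded2}, and tracking how the $J_T[2]$-torsors $\alpha_T, \beta_T$ and the covers $\widetilde{\Sym}^m(C_T)$ decompose under the splitting $J_T[2] \to J_{T_1}[2] \times J_{T_2}[2]$ dual to $\kappa_{\bH}$. The computation of the transfer factor and of $\kappa_{\bH}$ on each orbit in the stable class of $\tilde{T}$ (following \cite[Chap. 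X]{Wald} adapted to the ramified case) feeds into a $\kappa$-weighted point-count on $\widetilde{\Sym}^m(C_T)$, which should then factor through the product of analogous covers of $\Sym^{m_i}(C_{T_i})$.

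Granting this matching of torsors under $\kappa_{\bH}$, the rest is combinatorial bookkeeping: verify that the resulting coefficient in front of each nilpotent orbital integral on $\tLg$ vanishes unless the orbit lies in $\mathrm{ind}(\CO_1 \times \CO_2)$, by noting that the Catalan-type numbers $C_\ell(\cdot)$ appearing in Theorems \ref{tilded}, \ref{tilded2} have compatible convolution identities mirroring $a_m(T) = \sum a_{m_1}(T_1)a_{m_2}(T_2)$. Standard interpolation over lifts $\tilde{T}$ of many different $T$ (using Lemma \ref{parity}-style parity/independence arguments to ensure the Shalika germ values separate nilpotent orbits) then allows one to conclude the identity of distributions, not merely of their values at $\tilde{T}$. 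The transfer-factor matching in step three is both the deepest and the most technical point; everything else is a calculation made accessible by the geometric interpretation via $C_T$.
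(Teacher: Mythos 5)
There is a genuine mismatch here, at two levels. First, the statement you set out to prove is Assem's Conjecture D, which the paper quotes and \emph{assumes}; it is never proved there. The paper's own use of it is the reverse of your plan: in Theorem \ref{endo} the conjecture (together with Conjecture \ref{Assem}) is a hypothesis needed to isolate the two-Jordan-block orbits from other nilpotent orbits of the same dimension when one extracts coefficients from the Shalika expansion, and only for $m=m_1+m_2\le 2$ does a pure dimension count make the argument unconditional. Your proposal runs into exactly this circularity: "extract the coefficient of each nilpotent orbital integral" from the germ expansion of $J^{st}(\til{X},f^{\bH})$ requires knowing a priori that orbits with three or more Jordan blocks (which occur in the same dimension stratum once $m>2$) do not interfere, and the paper has no unconditional tool for that. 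Moreover, even if every step of your outline worked, it would only treat $\CO_1,\CO_2$ with at most two Jordan blocks, whereas Conjecture \ref{Assem2} is about arbitrary stable nilpotent orbits; the paper's formulas say nothing about the general case, so the conjecture cannot be proved this way.

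Second, the concrete identity you lean on in step two is false. Since $p_T=p_{T_1}p_{T_2}$ with $\deg p_{T_1}=n_1$ odd and $\deg p_{T_2}=n_2$ even, the genera satisfy $g=g_1+g_2+1$, so $J_T$ is \emph{not} isogenous to $J_{T_1}\times J_{T_2}$ and the Frobenius eigenvalues on $H^1(C_T,\Ql)$ are not the union of those of $C_{T_1}$ and $C_{T_2}$; consequently $a_m(T)=\sum_{m_1+m_2=m}a_{m_1}(T_1)a_{m_2}(T_2)$ and your "stable-side identity" $\Gamma^{st}_{\til{N}_m}(\til{T})=\sum\Gamma^{st}_{\til{N}_{m_1}}(\til{T}_1)\Gamma^{st}_{\til{N}_{m_2}}(\til{T}_2)$ have no justification. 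The factorization only appears after twisting by the endoscopic character $\kappa$: the paper's Proposition \ref{propendo} proves $\sum_{\alpha}\kappa(\alpha)\Gamma_{\til{N}_m}(\til{T}_\alpha)=\sum_{m_1+m_2=m}\gamma(\til{N}_{m_1})\Gamma^{st}_{\til{N}_{m_1}}(\til{T}_1)\Gamma^{st}_{\til{N}_{m_2}}(\til{T}_2)$, and the mechanism is Lemma \ref{endogeoid}: the $\kappa$-isotypic part of $H^*(\widetilde{\Sym}^m(C_T))$ is $\bigoplus_d H^d(J_{T_1})\otimes H^{m-d}(J_{T_2})$, obtained from the biquadratic cover with function field $k(x,\sqrt{p_{T_1}},\sqrt{p_{T_2}})$, plus the Catalan-polynomial convolution (Corollary \ref{multiCata}). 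So the part you postpone as "technical bookkeeping" is where all the content lies, and the stable shadow you propose to start from is incorrect; what your approach could at best recover is the paper's conditional Theorem \ref{endo}, not Conjecture \ref{Assem2} itself.
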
\p

As in Theorem \ref{Waldspurger1}, Waldspurger proved the conjecture in the case of unramified classical groups. We formulate some cases of Waldspurger's result with two Jordan blocks. Let $\CO_{m_1}^1(0)$ be the set of nilpotent orbits of $U_{n_1}(E/F)$ with two Jordan blocks of sizes $n_1-m_1$ and $m_1$. Similarly $\CO_{m_2}^2(0)$ and $\CO_m(0)$ are used for nilpotent orbits of $U_{n_2}(E/F)$ and $U_n(E/F)$. We have\p

\begin{theorem}\label{Waldspurger2} (Waldspurger, \cite[XII.9]{Wald}) Suppose $E/F$ is unramified and $\text{char}(k)>3n+1$. Then\hp

(a) Suppose $n_1$ is odd and $n_2$ is even, so that $n=n_1+n_2$ is odd. Fix $0\le 2m_1<n_1$, $0\le 2m_2\le n_2$ and write $m=m_1+m_2$. Write $\epsilon\in F^{\times}/N_{E/F}E^{\times}$ for the non-trivial class. For nilpotent orbit $\til{N}_m\in\CO_m(0)$, put $\gamma(\til{N}_m)=-1$ if $m$ is odd and $\til{N}_m$ is the orbit classified by $((n-m,m),\epsilon\pi^{-1/2},(-1)^g)$. In all other cases put $\gamma(\til{N}_m)=1$. We define likewise the factor $\gamma(\til{N}_{m_1})$ for $\til{N}_{m_1}\in \CO_{m_1}^1(0)$. Then
\[\sum_{\til{N}_{m}\in\CO_{m}(0)}\gamma(\til{N}_m)^{m_1}J(\til{N}_m,\cdot)\]

is the endoscopic transfer of the stable distribution
\[\sum_{\til{N}_{m_1}\in\CO_{m_1}^1(0)}\gamma(\til{N}_{m_1})\sum_{\til{N}_{m_2}\in\CO_{m_2}^2(0)}J(\til{N}_{m_1},\cdot)\otimes J(\til{N}_{m_2},\cdot).\]\p

(b) Suppose both $n_1$ and $n_2$ are even, so that $n=n_1+n_2$ is also even. Fix $0\le 2m_1\le n_1$, $0\le 2m_2\le n_2$ and write $m=m_1+m_2$. For any $\til{N}_m\in\CO_m(0)$, put $\gamma(\til{N}_m)=1$ if $\til{N}_m$ is hyperbolic (see Definition \ref{ell}) and $\gamma(\til{N}_m)=-1$ if $\til{N}_m$ is elliptic. Then
\[\sum_{\til{N}_{m}\in\CO_{m}(0)}\gamma(\til{N}_m)^{m_1}J(\til{N}_m,\cdot)\]

is the endoscopic transfer of the stable distribution
\[\sum_{\til{N}_{m_1}\in\CO_{m_1}^1(0)}\sum_{\til{N}_{m_2}\in\CO_{m_2}^2(0)}J(\til{N}_{m_1},\cdot)\otimes J(\til{N}_{m_2},\cdot).\]
\end{theorem}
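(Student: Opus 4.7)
The overall strategy mirrors the proof of Theorem \ref{stable}: apply the endoscopic transfer identity of Langlands-Shelstad (proved by Waldspurger-Ng\^{o}) to a family of rescaled test functions, use homogeneity of nilpotent orbital integrals to extract information dimension-by-dimension, and substitute the explicit Shalika germ formulas from Theorem \ref{main} and Theorem \ref{even} on both sides.

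Concretely, fix $\til{T}\in\tLg(F)_{\bx,-1/2}\cap\tLg^{rs}(F)$ and a $\tG$-regular endoscopic match $\til{T}^{\bH}=(\til{T}_1,\til{T}_2)\in(\tLg_1\oplus\tLg_2)(F)$. For $f\in C_c^{\infty}(\tLg(F))$ with transfer $f^{\bH}=f_1\otimes f_2$, applying the endoscopic identity to the rescalings $f_{(N)}(X)=f(\pi^{\ell N}X)$ and expanding each side with Shalika's expansion (\ref{Shalika}) for $\tG$, $U_{n_1}(E/F)$ and $U_{n_2}(E/F)$, combined with the homogeneity $J(\CO,f_{(N)})=q^{\ell N\dim\CO/2}J(\CO,f)$, I interpolate over $N$ exactly as in Theorem \ref{stable} to separate contributions of each dimension. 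Restricted to orbits with two Jordan blocks, this yields, for each $0\le m\le g$,
\[\sum_{\til{N}_m\in\CO_m(0)}\kappa(\til{N}_m)\,\Gamma_{\til{N}_m}(\til{T})\,J(\til{N}_m,f)\ =\!\sum_{m_1+m_2=m}\!\Gamma^{st}_{\til{N}^1_{m_1}}(\til{T}_1)\,\Gamma^{st}_{\til{N}^2_{m_2}}(\til{T}_2)\,J(\til{N}^1_{m_1}\otimes \til{N}^2_{m_2},f^{\bH}),\]
up to contributions from other nilpotent orbits of the same dimension.

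By Theorem \ref{main} (resp. Theorem \ref{even}), the stable germs on the right are $\pm a_{m_i}(T_i)$. The crucial geometric input is the multiplicativity identity
\[a_m(T)=\sum_{m_1+m_2=m}a_{m_1}(T_1)\,a_{m_2}(T_2),\]
which follows from the factorization $p_T(x)=p_{T_1}(x)p_{T_2}(x)$: at the level of Frobenius eigenvalues, $H^1(C_T/_{\bar k},\Ql)$ decomposes (up to the artificial boundary contributions $\lambda_0=1,\lambda_0'=q$ used to define $a_m$ in the even case) as $H^1(C_{T_1}/_{\bar k},\Ql)\oplus H^1(C_{T_2}/_{\bar k},\Ql)$, so the multiset $\{\lambda_i+\lambda_i'\}_{i\in I}$ is the disjoint union of $\{\lambda_i+\lambda_i'\}_{i\in I_1}$ and $\{\lambda_i+\lambda_i'\}_{i\in I_2}$, and $a_m(T)$ is (up to sign) its $m$-th elementary symmetric polynomial. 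Substituting back, one recovers exactly the combination of orbital integrals asserted in the theorem after matching signs.

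The main obstacle is identifying the endoscopic character $\kappa=\kappa_{\bH}$ on orbits in $\CO_m(0)$ with the sign $\gamma(\til{N}_m)^{m_1}$ in the statement. This requires an explicit evaluation of $\kappa_{\bH}$ on the component group of the centralizer of $\til{N}_m$, paired against the classification of orbits by the invariants $(d_i)\in F^{\times}/N_{E/F}E^{\times}$ or $\pi^{1/2}(F^{\times}/N_{E/F}E^{\times})$ recalled at the start of Section \ref{seccompute}, and a direct comparison with the orbit labels in case (a) and the hyperbolic/elliptic dichotomy of Definition \ref{ell} in case (b). A secondary technicality is that the interpolation only separates contributions by dimension; to isolate the two-Jordan-block piece from other orbits of equal dimension one either proceeds case-by-case for small $m$, or invokes Conjecture \ref{Assem} and Conjecture \ref{Assem2} as in Theorem \ref{stable}(ii).
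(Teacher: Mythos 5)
There is a fundamental mismatch between the statement you are proving and the tools you invoke. Theorem \ref{Waldspurger2} concerns \emph{unramified} $E/F$; in the paper it is not proved at all but quoted from Waldspurger \cite[XII.9]{Wald}, whose argument is a direct (and long) analysis of nilpotent orbital integrals for unramified classical groups. Your proposal, by contrast, runs the germ-expansion/homogeneity argument through Theorems \ref{main}, \ref{even}, \ref{tilded}, \ref{tilded2} — but those Shalika germ formulas are established only for \emph{ramified} quasi-split unitary groups: the whole construction of $\til{T}\in\tLg(F)_{\bx,-1/2}$ hinges on the vertex $\bx$ having reductive quotient $\mathrm{SO}_n(k)$ and on the Moy--Prasad filtration jumping in $\frac{1}{2}\Z$, which is special to the ramified case and has no counterpart at a hyperspecial point of an unramified unitary group. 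So the argument you sketch cannot prove the cited theorem; what it sketches is essentially the paper's proof of Theorem \ref{endo}, the ramified analogue, and even there only for $m\le 2$ unconditionally (or all $m$ assuming Conjectures \ref{Assem} and \ref{Assem2}), whereas Waldspurger's unramified statement is unconditional for all $m_1,m_2$.

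Even read as a sketch of Theorem \ref{endo}, two of your steps are not right as stated. First, the endoscopic identity produces on the $\tG$-side the $\kappa$-weighted sum of \emph{non-stable} germs $\sum_{\alpha\in H^1(k,J_T[2])}\kappa(\alpha)\Gamma_{\til{N}_m}(\til{T}_{\alpha})$, not a sum of stable germs, so Theorem \ref{main}/\ref{even} alone do not suffice; the paper needs the non-stable formulas of Theorem \ref{tilded} and the identity of Proposition \ref{propendo}, proved via the $\kappa$-isotypic part of the cohomology of the covers $\widetilde{\Sym}^m(C_{T_\alpha})$ (Lemma \ref{endogeoid}, a function-field analysis of the curve $C_T^{\kappa}$ with function field $k(x,\sqrt{p_{T_1}},\sqrt{p_{T_2}})$) together with the Catalan-number combinatorics of the appendix. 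Second, your ``crucial geometric input'' $a_m(T)=\sum_{m_1+m_2=m}a_{m_1}(T_1)a_{m_2}(T_2)$ is justified by claiming $H^1(C_T/_{\bar k},\Ql)\cong H^1(C_{T_1}/_{\bar k},\Ql)\oplus H^1(C_{T_2}/_{\bar k},\Ql)$; this is false — $C_T$ is the hyperelliptic curve attached to the product $p_{T_1}p_{T_2}$ and its genus exceeds $g_1+g_2$ in general, so its Frobenius eigenvalues are not the union of those of $C_{T_1}$ and $C_{T_2}$. The correct decomposition in the paper occurs only for the $\kappa$-isotypic piece $H^*(\til{C}_T/_{\bar k},\Ql)_{\kappa}=H^1(C_{T_1}/_{\bar k},\Ql)\oplus H^1(C_{T_2}/_{\bar k},\Ql)$, which is precisely why the covers enter. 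Finally, the identification of $\kappa$ with the signs $\gamma(\til{N}_m)^{m_1}$, which you flag as ``the main obstacle,'' is indeed a necessary transfer-factor computation and is left undone in your proposal.
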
\hp

Parallel to Theorem \ref{stable}, what we can show using Theorem \ref{main}, \ref{tilded}, \ref{even} and \ref{tilded2} is\p

\begin{theorem}\label{endo} Suppose instead $E/F$ is ramified. Then\hp

(i) The same results in Theorem \ref{Waldspurger2} are true for $m=m_1+m_2\le 2$.\qp

(ii) The same results in Theorem \ref{Waldspurger2} are true for all $m_1$, $m_2$ assuming Conjecture \ref{Assem} and Conjecture \ref{Assem2}.
\end{theorem}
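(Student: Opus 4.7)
The strategy parallels Theorem \ref{stable}, with the Langlands--Shelstad--Waldspurger--Ng\^o transfer identity replacing stability as the driving input. For $\til{X}=(\til{X}_1,\til{X}_2)\in\til{\mathfrak{h}}^{\tG-rs}(F)$ matching $\til{Y}\in\tLg^{rs}(F)$ and $f=f_1\otimes f_2$ factorizable under the endoscopic transfer, the transfer identity reads
\[J^{\kappa}(\til{Y},f):=\sum_{\til{Y}'\sim\til{Y}}\kappa(\til{Y}')\,J(\til{Y}',f) \ = \ J^{st}(\til{X}_1,f^{\bH}_1)\,J^{st}(\til{X}_2,f^{\bH}_2).\]
Apply Shalika's theorem to both sides, then substitute $f\mapsto f_{(N)}(X)=f(\pi^{\ell N}X)$ with $\ell$ chosen as in Theorem \ref{stable}. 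Using $J(\CO,f_{(N)})=q^{\ell N\dim\CO/2}J(\CO,f)$ and Vandermonde interpolation over $N$, the resulting identity separates by nilpotent orbit dimension: for each $d$,
\[\sum_{\dim\CO=2d}\Gamma^{\kappa}_{\CO}(\til{Y})\,J(\CO,f) \ = \ \sum_{\dim\CO_1+\dim\CO_2=2d}\Gamma^{st}_{\CO_1}(\til{X}_1)\,\Gamma^{st}_{\CO_2}(\til{X}_2)\,J(\CO_1,f^{\bH}_1)J(\CO_2,f^{\bH}_2),\]
where $\Gamma^{\kappa}_{\CO}(\til{Y}):=\sum_{\til{Y}'\sim\til{Y}}\kappa(\til{Y}')\Gamma_{\CO}(\til{Y}')$.

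Take $\til{X}_i=\til{T}_i$ to be a depth $-1/2$ lift of some $T_i\in\Lg_i(1)^{rs}(k)$, matching a lift $\til{T}\in\tLg(F)_{\bx,-1/2}$ of $T$. Then $p_T(x)=p_{T_1}(x)p_{T_2}(x)$, so $J_T$ is isogenous to $J_{T_1}\times J_{T_2}$ and the Frobenius eigenvalues on $H^1(C_T/_{\bar{k}},\Ql)$ decompose as the disjoint union of those on each $H^1(C_{T_i}/_{\bar{k}},\Ql)$. Direct expansion gives the key combinatorial identity
\[a_m(T) \ = \ \sum_{m_1+m_2=m} a_{m_1}(T_1)\,a_{m_2}(T_2),\]
with minor bookkeeping on the index sets $I,I_1,I_2$ according to the parities of $n,n_1,n_2$. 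Restrict the dimension-wise identity above to $d$ corresponding to two-Jordan-block orbits of type $(n-m,m)$ with $m\le 2$: these are the only nilpotent orbits of that dimension, and Theorem \ref{main}/\ref{even} evaluates the right-hand side in terms of $a_{m_1}(T_1)a_{m_2}(T_2)$. Varying $(\til{T}_1,\til{T}_2)$ and using the parity argument of Lemma \ref{parity} to invert the resulting linear system, one extracts the individual nilpotent orbital integrals, giving (i) once $\kappa$ is identified with the sign $\gamma(\til{N}_m)^{m_1}$ of Theorem \ref{Waldspurger2}.

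The main obstacle will be precisely this identification of $\kappa$. By Lemma \ref{Galois} the $\tG(F)$-orbits in the stable orbit of $\til{T}$ are classified by $H^1(k,J_T[2])$, and the Langlands--Shelstad character attached to the endoscopic datum $U_{n_1}\times U_{n_2}\hookrightarrow U_n$ is dual to the class in $J_T[2](k)$ (or its even-case analogue in $\Stab_{G(0)}(T)/Z(G(0))$) coming from the splitting $C_T\rightsquigarrow C_{T_1}\sqcup C_{T_2}$. The computation is formally parallel to Waldspurger's \cite[Chap.~X]{Wald} in the unramified case; the ramified pinning changes only the torsor normalization via the $\pi^{1/2}$-factors appearing in the classification of nilpotent orbits at the beginning of Section \ref{seccompute}. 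Comparing with the discriminant invariants $d_i$ that define each $\til{N}_m\in\CO_m(0)$ then matches $\kappa$ with $\gamma(\til{N}_m)^{m_1}$ as required.

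For part (ii), Conjecture \ref{Assem} supplies a basis of stable nilpotent distributions indexed by stable nilpotent orbits, and Conjecture \ref{Assem2} asserts that the endoscopic transfer of a stable product $\mathcal{D}_1\otimes\mathcal{D}_2$ supported on $\CO_1\times\CO_2$ lands in the single stable orbit $\mathrm{ind}^{\tG}_{\bH}(\CO_1\times\CO_2)$. Together they let us decouple the argument above block by block, so the dimension-wise identity applies to each pair of stable orbits separately. The formulas of Theorems \ref{main}/\ref{even} (supplemented by \ref{tilded}/\ref{tilded2} for the non-stable refinements) then apply throughout the relevant range of $m$, yielding the full statement for nilpotent orbits with two Jordan blocks.
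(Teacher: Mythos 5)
Your skeleton (transfer identity for matching regular semisimple elements, Shalika expansion on both sides, scaling by $\pi^{\ell N}$ and interpolation to separate orbit dimensions, then a spanning/parity argument over choices of $(T_1,T_2)$) does follow the paper's strategy, but the central computation has a genuine gap. The claim that $J_T$ is isogenous to $J_{T_1}\times J_{T_2}$, so that the Frobenius eigenvalues of $H^1(C_T/_{\bar k},\Ql)$ are the disjoint union of those of the $H^1(C_{T_i}/_{\bar k},\Ql)$, is false: $C_T=(y^2=p_{T_1}(x)p_{T_2}(x))$ is a connected hyperelliptic curve whose genus exceeds $g_1+g_2$ (in case (a) one has $g=g_1+g_2+1$), and in the biquadratic cover of $\mathbb{P}^1$ with function field $k(x,\sqrt{p_{T_1}},\sqrt{p_{T_2}})$ the three intermediate double covers $C_{T_1}$, $C_{T_2}$, $C_T$ contribute three \emph{independent} pieces of $H^1$. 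Hence your "key combinatorial identity" $a_m(T)=\sum_{m_1+m_2=m}a_{m_1}(T_1)a_{m_2}(T_2)$ is unsupported and generically false: already for $m=1$ it would assert that the trace of Frobenius on $H^1(C_T)$ equals the sum of the traces on $H^1(C_{T_1})$ and $H^1(C_{T_2})$ plus $(1+q)$, which fails for generic coprime $p_{T_1},p_{T_2}$.

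More structurally, the coefficient that actually appears on the unitary side of the transfer is the $\kappa$-twisted sum $\sum_{\alpha\in H^1(k,J_T[2])}\kappa(\alpha)\,\Gamma_{\til N_m}(\til T_\alpha)$ over the rational orbits inside the stable orbit of $\til T$, not the stable germ $a_m(T)$; your argument silently substitutes the latter for the former. Evaluating the twisted sum is precisely where the non-stable germ formulas are indispensable: Theorem \ref{tilded} (resp. \ref{tilded2}) expresses each $\Gamma_{\til N_m}(\til T_\alpha)$ through point counts on the $J_T[2]$-covers $\widetilde{\Sym}^{m'}(C_{T_\alpha})$, the character sum over $\alpha$ isolates the $\kappa$-isotypic part of $H^*(\widetilde{\Sym}^m(C_T))$, and this isotypic part is identified — via the intermediate curve $C_T^{\kappa}$ with function field $k(x,\sqrt{p_T(x)},\sqrt{p_{T_2}(x)})$, K\"unneth, and $S_m$-invariants (Lemma \ref{endogeoid}) — with $\bigoplus_d H^d(J_{T_1})\otimes H^{m-d}(J_{T_2})$; only after the Catalan-polynomial combinatorics (Corollary \ref{multiCata}) does the product $\sum_{m_1+m_2=m}\gamma(\til N_{m_1})a_{m_1}(T_1)a_{m_2}(T_2)$ of Proposition \ref{propendo} emerge. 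Your proposal omits this entire mechanism, leaves the sign $\gamma(\til N_{m_1})$ to an unexamined appeal to transfer factors, and does not address why case (b) requires both $n_1,n_2$ even (Theorem \ref{tilded2} only controls $\Gamma_{\til N_m}(\til T)+\Gamma_{\til N_m}(\ad(u)\til T)$, and this ambiguity matters exactly when $n_1$ is odd).
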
\hp

\begin{proof} We only give the proof for case (a). The proof for case (b) is completely the same while replacing the role of Theorem \ref{main} and \ref{tilded} by Theorem \ref{even} and \ref{tilded2}. The reason that in case (b) we want to assume both $n_1$ and $n_2$ are even (instead of only $n=n_1+n_2$ is even) is that in Theorem \ref{tilded2} we are only able to compute $\Gamma_{\til{N}_m}(\til{T})+\Gamma_{\til{N}_m}(\ad(u)\til{T})$. It happens that this discrepancy matters exactly when $n_1$ is odd.\p

The idea is similar to the proof of Theorem \ref{stable}. Let $\bx_1$ be a vertex on the Bruhat-Tits building of $U_{n_1}(E/F)$ with reductive quotient $\text{SO}_{n_1}/_k$. Let $V_1$ be the quasi-split quadratic space which is the standard representation of this $\text{SO}_{n_1}$. Let $T_1$ be any regular semisimple self-adjoint endomorphism of $V_1$. We have the same notations for $U_{n_2}(E/F)$ and let $T_2$ be any regular semisimple self-adjoint endomorphism of $V_2$.\p

Let $p_{T_1}(x),p_{T_2}(x)\in k[x]$ denote respectively the monic characteristic polynomials of $T_1$ and $T_2$. We assume that $p_{T_1}(x)$ and $p_{T_2}(x)$ are coprime. Write $C_{T_1}=(y^2=p_{T_1}(x))$, $C_{T_2}=(y^2=p_{T_2}(x))$, $J_{T_1}=\text{Pic}^0(C_{T_1})$ and $J_{T_2}=\text{Pic}^0(C_{T_2})$. By abuse of notation (as we don't have $T$ yet), we also write $p_T(x)=p_{T_1}(x)p_{T_2}(x)$ a degree $n$ monic polynomial, $C_T=(y^2=p_T(x))$ and $J_T=\text{Pic}^0(C_T)$.\p

The $G(0)(k)$-orbit of actual such $T\in\Lg(1)^{rs}(k)$ with characteristic polynomial $p_T(x)$ is a torsor under $H^1(k,J_T[2])$. This torsor is in fact canonically trivial \cite[Prop. 4]{BG} as mentioned in the introduction; the identity element in $H^1(k,J_T[2])$ corresponds to the $T$ for which $(\times2)^{-1}(\infty)\subset F_T$ is a trivial $J_T[2]$-torsor (Theorem \ref{Jerry}, \cite[Prop. 4]{BG} and \cite[Cor. 2.5 and Prop. 2.29]{Jerry}). This orbit of $T$ is also the one that intersects with the Kostant section \cite[Sec. 7]{BG2}. From now on we'll use the symbol $T$ to denote a representative of this orbit for which $(\times2)^{-1}(\infty)$ is trivial.\p

Let $\til{T}_1\in\tLg_1(F)_{\bx_1,-1/2}$ be a lift of $T_1$ and likewise for $\til{T}_2$. The orbits of those $\til{T}\in\tLg(F)$ that ``matches'' with $(\til{T}_1,\til{T}_2)\in\tLg_1(F)\times\tLg_2(F)$, i.e. that has the same characteristic polynomial, enjoy a one-one correspondence with those orbits of $T$ classified by $H^1(k,J_T[2])$ in the last paragraph, thanks to Lemma \ref{Galois}.\p

Recall that $J_T[2]\cong\text{Res}_k^{k[x]/p_T(x)}\mu_2/\mu_2\cong\ker(\text{Res}_k^{k[x]/p_T(x)}\mu_2\xra{Nm}\mu_2)$. In the middle group the $\mu_2$ is embedded into $\text{Res}_k^{k[x]/p_T(x)}\mu_2$ via the diagonal embedding. The second group and the third group are also dual to each other; this gives a self-dual structure $J_T[2]\times J_T[2]\ra\mu_2$.\p

Now as $p_T(x)=p_{T_1}(x)p_{T_2}(x)$, we have $\text{Res}_k^{k[x]/p_T(x)}\mu_2=\text{Res}_k^{k[x]/p_{T_1}(x)}\mu_2\times\text{Res}_k^{k[x]/p_{T_2}(x)}\mu_2$. On the latter group that is an element $\kappa=\kappa_{n_1,n_2}:=(1,-1)$. Since $\deg p_{T_2}=n_2$ is even, this element lies in $H^0(k,\ker(\text{Res}_k^{k[x]/p_T(x)}\mu_2\xra{Nm}\mu_2))\cong H^0(k,J_T[2])\cong H^1(k,J_T[2]^*)^*\cong H^1(k,J_T[2])^*$. In other words, $\kappa$ defines a character on $H^1(k,J_T[2])$.\p

By carefully checking the transfer factor, one can conclude that

\[\sum_{\alpha\in H^1(k,J_T[2])}\kappa(\alpha)J(\til{T}_{\alpha},\cdot)\]

is the endoscopy transfer of
\[J^{st}(\til{T}_1,\cdot)\otimes J^{st}(\til{T}_2,\cdot)=\sum_{\alpha_1\in H^1(k,J_{T_1}[2])}\sum_{\alpha_2\in H^1(k,J_{T_2}[2])}J(\til{T}_{\alpha_1},\cdot)\otimes J(\til{T}_{\alpha_2},\cdot).\]\p

Here $\til{T}_{\alpha}\in\tLg(F)$ is any representative of the orbit classified by $\alpha$ as described, and similarly for $\til{T}_{\alpha_1}\in\tLg_1(F)$, $\til{T}_{\alpha_2}\in\tLg_2(F)$. Arguing as in the proof of Theorem \ref{stable} and assume Conjecture \ref{Assem} and \ref{Assem2} if $m>2$, we have

\[\sum_{\til{N}_m\in\CO_m(0)}\left(\sum_{\alpha\in H^1(k,J_T[2])}\kappa(\alpha)\Gamma_{\til{N}_m}(\til{T}_{\alpha})\right)J(\til{N}_m,\cdot)\]\hp

is the endoscopy transfer of

\[\sum_{\tiny\begin{matrix}m_1+m_2=m\\0\le 2m_1<n_1\\0\le 2m_2\le n_2\end{matrix}}\sum_{\til{N}_{m_1}\in\CO_{m_1}^1(0)}\sum_{\til{N}_{m_2}\in\CO_{m_2}^2(0)}\Gamma^{st}_{\til{N}_{m_1}}(\til{T}_1)\Gamma^{st}_{\til{N}_{m_2}}(\til{T}_2)\cdot J(\til{N}_{m_1},\cdot)\otimes J(\til{N}_{m_2},\cdot).\]\p

Later we will simply write $m_1+m_2=m$ for the first summation in the last formula while it should be understood that $m_1$ and $m_2$ vary only in the range for which $\til{N}_{m_1}$ and $\til{N}_{m_2}$ are defined. The key is to prove\p

\begin{proposition}\label{propendo} We have equality
\[\sum_{\alpha\in H^1(k,J_T[2])}\kappa(\alpha)\Gamma_{\til{N}_m}(\til{T}_{\alpha})=\sum_{m_1+m_2=m}\gamma(\til{N}_{m_1})\Gamma^{st}_{\til{N}_{m_1}}(\til{T}_1)\Gamma^{st}_{\til{N}_{m_2}}(\til{T}_2),\]

where in the summation in the RHS, $\til{N}_{m_1}$ is chosen arbitrarily in $\CO_{m_1}^1(0)$ and $\til{N}_{m_2}$ is chosen arbitrarily in $\CO_{m_2}^2(0)$. See the definition of $\gamma(\cdot)$ in the statement of Theorem \ref{Waldspurger2}.
\end{proposition}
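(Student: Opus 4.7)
My plan is to verify the endoscopic identity by expressing both sides in terms of point counts on (covers of) symmetric powers of hyperelliptic curves, then comparing them via the factorization $p_T(x) = p_{T_1}(x)\,p_{T_2}(x)$.

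For the LHS, I would apply Theorem~\ref{tilded} (or~\ref{tilded2} in case (b)) to each summand $\Gamma_{\til{N}_m}(\til{T}_\alpha)$. The dependence on $\alpha \in H^1(k, J_T[2])$ enters only through which $J_T[2]$-torsor defines the \'{e}tale cover $\widetilde{\Sym}^j(C_T) \to \Sym^j(C_T)$, so the $\kappa$-twisted sum
\[\sum_{\alpha\in H^1(k,J_T[2])} \kappa(\alpha)\,\#\widetilde{\Sym}_\alpha^j(C_T)(k)\]
equals, by Grothendieck--Lefschetz, the trace of Frobenius on the $\kappa$-isotypic summand of the cohomology of the total $J_T[2]$-Galois cover. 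On the RHS, I would use Theorem~\ref{main}/\ref{even} to write $\Gamma^{st}_{\til{N}_{m_i}}(\til{T}_i) = \pm a_{m_i}(T_i)$, with the sign ambiguity absorbed into the factor $\gamma(\til{N}_{m_1})$ and keeping in mind that the stable germ on the $U_{n_i}(E/F)$-factor depends only on which orbit of $\til{T}_i$ we picked.

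The heart of the argument is a K\"unneth-type factorization. Since $p_T = p_{T_1}p_{T_2}$ with coprime factors, there is a canonical isomorphism
\[\text{Res}_k^{k[x]/p_T(x)}\mu_2 \cong \left(\text{Res}_k^{k[x]/p_{T_1}(x)}\mu_2\right)\times\left(\text{Res}_k^{k[x]/p_{T_2}(x)}\mu_2\right),\]
which passes to a corresponding decomposition of $J_T[2]$, and the endoscopic character $\kappa = (1,-1)$ is pulled back from the second factor. Via the divisor-sum map $\Sym^{m_1}(C_{T_1}) \times \Sym^{m_2}(C_{T_2}) \to \Sym^{m_1+m_2}(C_T)$, the $\kappa$-isotypic cover of $\Sym^j(C_T)$ should decouple into a product whose first factor is the untwisted $\Sym^{m_1}(C_{T_1})$ and whose second factor is a canonical twist of $\Sym^{m_2}(C_{T_2})$. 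This yields an identity of the form
\[\sum_{\alpha} \kappa(\alpha)\,\#\widetilde{\Sym}_\alpha^j(C_T)(k) = \sum_{m_1+m_2=j}\pm\,\#\Sym^{m_1}(C_{T_1})(k)\cdot\#\widetilde{\Sym}^{m_2}(C_{T_2})(k),\]
and, after substitution into Theorem~\ref{tilded}/\ref{tilded2}, the LHS of the proposition should factor as a product of the expressions on the RHS of Theorem~\ref{tilded}/\ref{tilded2} applied to $T_1$ and $T_2$ separately. The combinatorial reassembly of the Catalan-like numbers $C_\ell(\cdot)$ on both sides reduces to a Vandermonde-type convolution, reflecting additivity of dimensions under Lusztig--Spaltenstein induction.

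The main obstacle is the K\"unneth decomposition above, particularly in the mixed-parity case (a) where $g = g_1 + g_2 + 1$ produces an ``extra'' $\Gm$-piece in the relevant Jacobian (with artificial Frobenius eigenvalues $\lambda_0 = 1,\,\lambda_0' = q$) responsible for the $(q+1)$-factors of Theorem~\ref{tilded2}. Tracking how the torsors $\alpha_T$ and $\beta_T$ interact with their analogues for the subcurves is delicate, and is exactly where the parity-dependent sign $\gamma(\til{N}_{m_1})^{m_1}$ gets produced. In the even-$n$ case (b), the hypothesis that both $n_1$ and $n_2$ be even (not merely $n = n_1 + n_2$) ensures that the $\ad(u)$-ambiguity in Theorem~\ref{tilded2} can be split compatibly between the two sides, matching the fact that $\kappa$ factors through the appropriate quotient.
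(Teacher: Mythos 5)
Your skeleton matches the paper's: convert both sides into point counts via Theorems \ref{main}/\ref{even} and \ref{tilded}, observe that the $\kappa$-weighted sum over $\alpha$ isolates the $\kappa$-isotypic part of the cohomology of the $J_T[2]$-cover of $\Sym^m(C_T)$, and finish with a convolution identity for the $C_\ell$ (this is exactly Corollary \ref{multiCata}). But the geometric heart of your argument has a genuine gap. The ``divisor-sum map'' $\Sym^{m_1}(C_{T_1})\times\Sym^{m_2}(C_{T_2})\to\Sym^{m_1+m_2}(C_T)$ does not exist: $C_{T_1}$ and $C_{T_2}$ are not subcurves of $C_T$ and admit no morphisms to it; they are related to $C_T$ only through the common $(\mu_2)^2$-cover $C_T^{\kappa}$ with function field $k(x,\sqrt{p_{T_1}(x)},\sqrt{p_{T_2}(x)})$. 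Relatedly, $J_T[2]$ is not $J_{T_1}[2]\times J_{T_2}[2]$ (in case (a) its $\mathbb{F}_2$-rank exceeds the sum of the two ranks by $2$), so the cover cannot simply ``decouple'' into an untwisted factor times a twisted factor. Your proposed identity $\sum_\alpha\kappa(\alpha)\#\widetilde{\Sym}^j_\alpha(C_T)(k)=\sum_{m_1+m_2=j}\pm\,\#\Sym^{m_1}(C_{T_1})(k)\cdot\#\widetilde{\Sym}^{m_2}(C_{T_2})(k)$ also cannot be right as stated: the $\kappa$-isotypic cohomology carries no Tate-twist contributions (already for $j=1$ it is exactly $H^1(C_{T_1})\oplus H^1(C_{T_2})$), whereas $\#\Sym^{m_1}(C_{T_1})(k)$ contains $1+q+\cdots$ factors coming from $H^0$- and $H^2$-type classes, and no choice of signs in a single convolution visibly cancels these.

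What the paper proves at this point is Lemma \ref{endogeoid}: $H^*(\widetilde{\Sym}^m(C_T)/_{\bar{k}},\Ql)_\kappa\cong\bigoplus_{d=0}^m H^d(J_{T_1}/_{\bar{k}},\Ql)\otimes H^{m-d}(J_{T_2}/_{\bar{k}},\Ql)$, i.e.\ only exterior powers of the $H^1$'s of the two auxiliary curves appear. The proof is covering-theoretic rather than product-theoretic: for $m=1$ the $\kappa$-double cover $C_T^{\kappa}\to C_T$ sits inside a $(\mu_2)^2$-cover of $\mathbb{P}^1$ whose three intermediate double covers are $C_T$, $C_{T_1}$, $C_{T_2}$, giving $H^*(\til{C}_T)_\kappa=H^1(C_{T_1})\oplus H^1(C_{T_2})$; for general $m$ one realizes $\Sym^m(C_T)^\kappa$ as an $S_m$-quotient of the diagonal double cover of $(C_T)^m$, applies K\"unneth to the corresponding covers of $(\mathbb{P}^1)^m$, and takes $S_m$-invariants, where the symmetric power of $H^1$ must be interpreted in the graded sense and hence becomes an exterior power. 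Only after this does one substitute the expressions for $a_{m_1}(T_1)$ and $a_{m_2}(T_2)$ (via Lemma \ref{two}(iii) and Proposition \ref{inverse}) and invoke $C(x+y,q)=C(x,q)C(y,q)$ --- your ``Vandermonde-type convolution'' --- to conclude. So your combinatorial endgame is the right one, but the decoupling step you yourself flag as the main obstacle is precisely where a different argument is required, and without it the proof is incomplete.
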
\hp

\begin{proof}Using Theorem \ref{main}, \ref{tilded} and \ref{even}, what we have to prove is the following geometric identity that underlies this endoscopic transfer:

\[\sum_{m_1+m_2=m}a_{m_1}(T_1)a_{m_2}(T_2)=\frac{1}{\#J_T[2](k)}\;\cdot\]
\mhp
\begin{equation}\label{long}
\left(\sum_{\alpha\in H^1(k,J_T[2])}\kappa(\alpha)\left(\sum_{0\le2\ell\le m}\#\widetilde{\Sym}^{m-2\ell}(C_{T_{\alpha}})(k)\cdot(-q)^{\ell}\cdot C_{\ell}(-g+m-2\ell-1)\right.\right.
\end{equation}
\mhp
\[\left.\left.-(q+1)\sum_{0<2\ell+1\le m}\#\widetilde{\Sym}^{m-2\ell-1}(C_{T_{\alpha}})(k)\cdot(-q)^{\ell}\cdot C_{\ell}(-g+m-2\ell)\right)\right)\]
\hp

We have to explain the slight abuse of notation here. Different $\alpha\in H^1(k,J_T[2])$ gives us the same $C_{T_{\alpha}}=C_T$. However, the definition of the \'{e}tale $J_T[2]$-cover $\widetilde{\Sym}^m(C_{T_{\alpha}})$ of $\Sym^m(C_T)$ depends on the orbit of $T_{\alpha}$, thus depends on $\alpha$. In fact, changing $\alpha\in H^1(k,J_T[2])$ exactly amounts to changing the Frobenius structure on $\widetilde{\Sym}^m(C_{T_{\alpha}})$ as a $J_T[2]$-torsor over $\Sym^m(C_T)$.\p

Recall that $T$ is used to denote the $T_{\alpha}$ with $\alpha$ trivial. For any $\kappa'\in H^1(k,J_T[2])^*=H^0(k,J_T[2]^*)$, we can consider the $\kappa'$-isotypic component $H^*(\widetilde{\Sym}^m(C_{T_{\alpha}}))_{\kappa'}$. We have
\[\text{Tr}(\text{Frob}:H^*(\widetilde{\Sym}^m(C_{T_{\alpha}})/_{\bar{k}},\Ql)_{\kappa'})=\kappa'(\alpha)\cdot\text{Tr}(\text{Frob}:H^*(\widetilde{\Sym}^m(C_T)/_{\bar{k}},\Ql)_{\kappa'})\]\hp

Summing over all $\alpha$ and all $\kappa'$, we see
\[\sum_{\alpha\in H^1(k,J_T[2])}\kappa(\alpha)\#\widetilde{\Sym}^m(C_{T_{\alpha}})(k)\]
\mhp
\begin{equation}\label{cover}
=(-1)^m\sum_{\alpha\in H^1(k,J_T[2])}\kappa(\alpha)\sum_{\kappa'\in H^1(k,J_T[2])*}\kappa'(\alpha)\cdot\text{Tr}(\text{Frob}:H^*(\widetilde{\Sym}^m(C_T)/_{\bar{k}},\Ql)_{\kappa'}).
\end{equation}

\[=(-1)^m\#J_T[2](k)\cdot\text{Tr}(\text{Frob}:H^*(\widetilde{\Sym}^m(C_T))_{\kappa}/_{\bar{k}},\Ql).\;\;\;\;\;\;\;\;\;\;\;\;\;\;\;\;\;\;\;\;\;\;\;\;\;\;\;\;\;\;\;\;\;\,\]\p

In the last step we used the equality $\#H^1(k,J_T[2])=\#J_T[2](k)$. To compute $\text{Tr}(\text{Frob}:H^*(\widetilde{\Sym}^m(C_T)/_{\bar{k}},\Ql)_{\kappa})$, it will be a good idea to first deal with the case $m=1$. In the rest of the proof we write $\til{C}_T:=\widetilde{\Sym}^1(C_T)$. This is an \'{e}tale $J_T[2]$-cover of $C_T$.\p

Finite covers between (projective smooth) curves can be read out from their function fields. Let's base change from the ground field $k$ to $\bar{k}$ for the moment. Recall $C_T$ is a double cover of $\mathbb{P}^1$. Their function fields are respectively $\bar{k}(x)\subset \bar{k}(x,\sqrt{p_T(x)})$. The key is to observe
\[\bar{k}(\til{C}_T)=\bar{k}(x,\sqrt{p_T(x)},\sqrt{P(x)}\,|\,P(x)\text{ runs over even degree divisors of }p_T(x)).\]\hp

This is because the above function field extension gives an \'{e}tale $J_T[2]$-cover of $C_T$, which is unique over $\bar{k}$. Now $\kappa$, being a non-trivial element in $J_T[2](k)^*$, corresponds to a degree $2$ cover $C_T^{\kappa}$ of $C_T$ inside $\til{C}_T\ra C_T$. This cover is given by the function field $\bar{k}(C_T^{\kappa})=\bar{k}(x,\sqrt{p_T(x)},\sqrt{p_{T_2}(x)})=\bar{k}(x,\sqrt{p_{T_1}(x)},\sqrt{p_{T_2}(x)})$.\p

The curve $C_T^{\kappa}$, as well as its function field, descend back to $k$. Precisely, since $\til{C}_T$ is defined to be the curve for which the fiber above $\infty$ is trivial, we have $C_T^{\kappa}=k(x,\sqrt{p_T(x)},\sqrt{p_{T_2}(x)})$ (here it's important that $p_{T_2}(x)$ was chosen to be monic). Now recall\p

\begin{lemma}\label{Koji}Let $X$ be a quasi-projective variety over any field $k$ and $G$ be a finite group acting on $X$. Choose prime $\ell$ which is coprime to $|G|$. Let $Y=X/G$ be the scheme-theoretic quotient. Then $H^*(Y/_{\bar{k}},\Ql)\cong H^*(X/_{\bar{k}},\Ql)^G$.
\end{lemma}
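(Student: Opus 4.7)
The plan is to express the statement as an identity of $\ell$-adic sheaves on $Y$ and then pass to cohomology, using that $|G|$ is invertible in $\Ql$ so that averaging is an exact idempotent.

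First, after base change to $\bar{k}$ (which commutes with the formation of the scheme-theoretic quotient $X/G$ since $G$ is a constant finite group), I may assume $k = \bar{k}$. Let $\pi : X \to Y = X/G$ denote the quotient map. Because $G$ is finite and $X$ is quasi-projective, $\pi$ is a finite surjective morphism, so $\pi_* = R\pi_*$ is exact and the Leray spectral sequence degenerates into the identification $H^*(Y, \pi_*\Ql) \cong H^*(X, \Ql)$, which is $G$-equivariant for the natural $G$-action on $\pi_*\Ql$ coming from the $G$-action on $X$.

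Next I would identify $(\pi_*\Ql)^G$ with $\Ql_Y$. Since $\ell \nmid |G|$, the averaging operator $e_G := \tfrac{1}{|G|}\sum_{g\in G} g$ is a well-defined idempotent endomorphism of $\pi_*\Ql$ whose image is the $G$-invariant subsheaf. For any geometric point $y \in Y$, the stalk $(\pi_*\Ql)_y$ is the $\Ql$-vector space on the finite set $\pi^{-1}(y)$, which is a single $G$-orbit; its $G$-invariants are the line of constant functions on this orbit. Hence $(\pi_*\Ql)^G$ is a lisse rank-one sheaf on $Y$ containing, and so equal to, the constant sheaf $\Ql_Y$.

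Finally, since $e_G$ is exact, taking $G$-invariants commutes with cohomology:
\[ H^*(Y, \Ql) \;=\; H^*\bigl(Y, (\pi_*\Ql)^G\bigr) \;=\; H^*(Y, \pi_*\Ql)^G \;=\; H^*(X, \Ql)^G, \]
where the middle equality uses the exactness of $e_G$ and the last one is the degenerate Leray identification above.

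The only real subtlety is step (3)—the identification $(\pi_*\Ql)^G = \Ql_Y$—where one must keep in mind that in positive characteristic the quotient $Y$ may be singular and ramification of $\pi$ may occur along fixed loci; the point is that both issues are harmless because the coprimality of $\ell$ and $|G|$ makes the averaging argument purely formal at the level of stalks, so no resolution or tameness hypothesis is needed beyond what is already assumed.
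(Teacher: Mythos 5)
Your argument is correct, and it is the standard proof of this standard fact; the paper itself does not prove Lemma \ref{Koji} at all but merely recalls it, so there is no authorial proof to compare against. Your chain of reasoning --- formation of $X/G$ commutes with the flat base change $k\to\bar k$ (invariants are a kernel), $\pi$ is finite so $R^q\pi_*=0$ for $q>0$ and $H^*(Y,\pi_*\Ql)\cong H^*(X,\Ql)$ $G$-equivariantly, the averaging idempotent $e_G$ splits off $(\pi_*\Ql)^G$ as a direct summand so invariants commute with cohomology, and the stalkwise identification $(\pi_*\Ql)^G\cong\Ql_Y$ --- is exactly what is needed, and you are right that no tameness or resolution enters. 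Two small points of precision: first, the assertion that $(\pi_*\Ql)^G$ is \emph{lisse} is not justified (and not needed); $\pi_*\Ql$ is only constructible, non-lisse along the branch locus, and the clean statement is that the adjunction map $\Ql_Y\to(\pi_*\Ql)^G$ is an isomorphism because it is so on every stalk. Second, the fact that the geometric fiber $\pi^{-1}(\bar y)$ is a single $G$-orbit is the transitivity of $G$ on primes of $A$ over a prime of $A^G$ (Bourbaki, or SGA~1, Exp.~V), valid with no separability hypothesis, which is what makes the stalk computation go through in arbitrary characteristic; it is worth citing rather than asserting.
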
\p

Using the lemma, we have
\[H^*(\til{C}_T/_{\bar{k}},\Ql)_{\kappa}=H^*(C_T^{\kappa}/_{\bar{k}},\Ql)\ominus H^*(C_T/_{\bar{k}},\Ql),\]\qp

where the two sides of the equality are in the abelian category of virtual representations of the free abelian group generated by Frobenius. Nevertheless, it's obvious from the function field of $C_T^{\kappa}$ that it is a $(\mu_2)^2$-cover of $\mathbb{P}^1$, and that the three double covers in the middle are $C_T$, $C_{T_1}$ and $C_{T_2}$! This gives
\[H^*(C_T^{\kappa}/_{\bar{k}},\Ql)\ominus H^*(C_T/_{\bar{k}},\Ql)\]
\[=\left(H^*(C_{T_1}/_{\bar{k}},\Ql)\ominus H^*(\mathbb{P}^1/_{\bar{k}},\Ql)\right)\oplus\left(H^*(C_{T_2}/_{\bar{k}},\Ql)\ominus H^*(\mathbb{P}^1/_{\bar{k}},\Ql)\right)\]
\[=H^1(C_{T_1}/_{\bar{k}},\Ql)\oplus H^1(C_{T_2}/_{\bar{k}},\Ql).
\;\;\;\;\;\;\;\;\;\;\;\;\;\;\;\;\;\;\;\;\;\;\;\;\;\;\;\;\;\;\;\;\;\;\;\;\;\;\;\;\;\;\;\;\;\;\;\;\;\;\;\;\;\;\;\,\]\p

In summary $H^*(\til{C}_T/_{\bar{k}},\Ql)_{\kappa}=H^1(C_{T_1}/_{\bar{k}},\Ql)\oplus H^1(C_{T_2}/_{\bar{k}},\Ql)$. For general $m$, what we have is\hp

\begin{lemma}\label{endogeoid} Let $J_{T_1}$ and $J_{T_2}$ be the Jacobian of $C_{T_1}$ and $C_{T_2}$, respectively. Then
\begin{equation}\label{unitary}
H^*(\widetilde{\Sym}^m(C_T)/_{\bar{k}},\Ql)_{\kappa}=\bigoplus_{d=0}^m H^d(J_{T_1}/_{\bar{k}},\Ql)\otimes H^{m-d}(J_{T_2}/_{\bar{k}},\Ql).
\end{equation}
\end{lemma}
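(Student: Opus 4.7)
The plan is to reduce from $\Sym^m(C_T)$ to the $m$-fold Cartesian power $C_T^m$ via the K\"unneth formula and then extract $S_m$-invariants, with the already-established $m=1$ identity $H^*(\til{C}_T)_\kappa = H^1(C_{T_1}) \oplus H^1(C_{T_2})$ as the only nontrivial input. The first step is to reinterpret the $\kappa$-isotypic part of cohomology of the \'etale $J_T[2]$-cover $\widetilde{\Sym}^m(C_T) \to \Sym^m(C_T)$ as $H^*(\Sym^m(C_T), \mathcal{L}_\kappa^{(m)})$ for a rank-one, order-two local system $\mathcal{L}_\kappa^{(m)}$ on $\Sym^m(C_T)$. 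Since $\widetilde{\Sym}^m(C_T) = \Sym^m(C_T) \times_{\text{Pic}^1(C_T)} F_T$ by definition, this local system is the pullback along $j_m$ of the character sheaf $\mathcal{L}_\kappa$ on $\text{Pic}^1(C_T)$ associated to $\kappa \in J_T[2]^*$ via the Weil pairing.

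Next I would pull back further along $\phi: C_T^m \to \Sym^m(C_T) \to \text{Pic}^1(C_T)$, which factors as $(p_1,\dots,p_m) \mapsto \sum_i ((p_i)-(\infty)) + (\infty)$. The defining multiplicativity of character sheaves under the group law on $J_T$ gives a canonical isomorphism $\phi^*\mathcal{L}_\kappa \cong \mathcal{L}^{\boxtimes m}$, where $\mathcal{L}$ is the order-two local system on $C_T$ that is the single-variable pullback of $\mathcal{L}_\kappa$. By K\"unneth and Lemma \ref{Koji},
\[
H^*(\widetilde{\Sym}^m(C_T))_\kappa = \bigl(H^*(C_T, \mathcal{L})^{\otimes m}\bigr)^{S_m}.
\]
The $m=1$ case already worked out tells us $H^*(C_T, \mathcal{L}) = H^1(C_{T_1}) \oplus H^1(C_{T_2})$, concentrated in degree one. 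Because each tensor factor is odd, the $S_m$-action carries the Koszul sign, so the invariants equal the exterior power
\[
\Lambda^m\bigl(H^1(C_{T_1}) \oplus H^1(C_{T_2})\bigr) = \bigoplus_{d=0}^m \Lambda^d H^1(C_{T_1}) \otimes \Lambda^{m-d} H^1(C_{T_2}),
\]
and invoking $H^d(J_{T_i}) = \Lambda^d H^1(C_{T_i})$ recovers the claimed decomposition.

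The main obstacle I expect is the bookkeeping in the very first step: one must verify that the $J_T[2]$-torsor structure on $\widetilde{\Sym}^m(C_T)$, whose normalization is fixed by its fiber over $m(\infty)$ being isomorphic to the distinguished torsor $\alpha_T = (\times2)^{-1}(\infty)$, really corresponds via $j_m$ to the canonical $J_T[2]$-torsor $F_T \to \text{Pic}^1(C_T)$ in a manner compatible with the same character $\kappa$ as in the $m=1$ calculation. This is essentially tautological from the fiber-product definition, but the shift by $(m-1)(\infty)$ built into $j_m$ has to be tracked carefully, and one needs to make sure that the translation on $\text{Pic}^1(C_T)$ does not introduce a spurious twist of the character sheaf. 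Once this identification is secured, the K\"unneth computation and the sign-twisted $S_m$-invariance are formal, and the rest reduces to a purely algebraic expansion of $\Lambda^m$ of a direct sum.
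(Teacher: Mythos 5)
Your proof is correct, and its endgame coincides with the paper's: reduce to the $m$-fold product $C_T^m$, apply K\"unneth, take $S_m$-invariants with the Koszul sign (so that invariants of odd-degree classes become $\Lambda^m$), and identify $\Lambda^d H^1(C_{T_i},\Ql)\cong H^d(J_{T_i},\Ql)$, with the $m=1$ identity $H^*(\til{C}_T)_\kappa=H^1(C_{T_1})\oplus H^1(C_{T_2})$ as the only outside input. Where you genuinely diverge is the middle step identifying the $\kappa$-part over the product: you phrase everything in terms of the order-two rank-one local system obtained by pushing out $F_T\ra\text{Pic}^1(C_T)$ along $\kappa$, pull it back along $j_m$ and then along $C_T^m\ra\text{Pic}^1(C_T)$, and invoke multiplicativity to get $\mathcal{L}^{\boxtimes m}$; the paper never introduces local systems, but instead transports the double cover $C_T^{\kappa}\ra C_T$ to $\Sym^m(C_T)$ via $\pi_1^{\acute{e}t,tame}(\Sym^m(C_T))\cong\pi_1^{\acute{e}t,tame}(C_T)$, realizes it as an $S_m$-quotient of a double cover of $(C_T)^m$, and computes by an explicit inclusion--exclusion over the intermediate "diagonal" double covers $V^{\nu}$ of $(\mathbb{P}^1)^m$ sitting inside $\prod_i C_{T_{\nu(i)}}$, using the function-field description. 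Your route is more formal and shorter, but it does lean on two points you should make explicit: (a) the equivariant version of Lemma \ref{Koji} for local systems (routine, since $m!$ is invertible in $\Ql$-coefficients, via the transfer); and (b) the multiplicativity of the torsor $F_T\ra\text{Pic}^1(C_T)$, which is not automatic for a torsor under an abelian variety but is supplied exactly by the group structure on $G_T$ in Theorem \ref{Jerry} ($\times2$ is a homomorphism there), together with the arithmetic normalization: the translation by $(m-1)(\infty)$ in $j_m$ could a priori twist the local system by the geometrically constant class $\kappa_*\alpha_T^{\otimes(m-1)}$, but this vanishes because the representative $T$ in this part of the argument is chosen so that $\alpha_T=(\times2)^{-1}(\infty)$ is the trivial torsor (and for general $T_\alpha$ the discrepancy is exactly the $\kappa(\alpha)$ factor already accounted for in (\ref{cover})). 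The paper's more pedestrian route hides the same normalization in the choice of the monic $p_{T_2}(x)$ when descending $C_T^{\kappa}$ to $k$; with point (b) spelled out, your argument is a complete and somewhat cleaner substitute.
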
\hp

To prove Lemma \ref{endogeoid}, note that $\pi_1^{\acute{e}t,tame}(\Sym^m(C_T))\cong\pi_1^{\acute{e}t,tame}(C_T)$ canonically, and thus we have a double cover $\Sym^m(C_T)^{\kappa}\ra\Sym^m(C_T)$ corresponding to $C_T^{\kappa}\ra C_T$. This double cover can be seen as a $S_m$-quotient of $((C_T)^m)^{\kappa}\ra (C_T)^m$, the ``diagonal'' double cover in the $(\mu_2)^m$-cover $(C_T^{\kappa})^m\ra (C_T)^m$.\p

Now the cover $((C_T)^m)^{\kappa}$ is a $\left((\mu_2)^m\times\mu_2\right)$-cover of $(\mathbb{P}^1)^m$. For any $\nu\in\{1,2\}^m$, denote by $V^{\nu}$ the ``diagonal'' double cover of $(\mathbb{P}^1)^m$ in $\prod_{i=1}^mC_{T_{\nu(i)}}\ra(\mathbb{P}^1)^m$. These are exactly all the double covers of $(\mathbb{P}^1)^m$ which are between $((C_T)^m)^{\kappa}\ra(\mathbb{P}^1)^m$ but not between $(C_T)^m\ra(\mathbb{P}^1)^m$. We thus have 

\[H^*(((C_T)^m)^{\kappa}/_{\bar{k}},\Ql)\ominus H^*((C_T)^m/_{\bar{k}},\Ql)=\sum_{\nu\in\{1,2\}^m}\left(H^*(V^{\nu}/_{\bar{k}},\Ql)\ominus H^*((\mathbb{P}^1)^m/_{\bar{k}},\Ql)\right).\]\p

On the other hand, for the $(\mu_2)^m$-cover $\prod_{i=1}^mC_{T_{\nu(i)}}\ra(\mathbb{P}^1)^m$, we can consider the product map $\phi:\mu_2^m\ra\mu_2$. Then the $\phi$-isotypic part is equal to the term in the previous sum:
\[H^*(\prod_{i=1}^mC_{T_{\nu(i)}}/_{\bar{k}},\Ql)_{\phi}\cong H^*(V^{\nu}/_{\bar{k}},\Ql)\ominus H^*((\mathbb{P}^1)^m/_{\bar{k}},\Ql),\;\forall \nu\in\{1,2\}^m.\]\p

Nevertheless, from the K\"{u}nneth formula one deduces $H^*(\prod_{i=1}^mC_{T_{\nu(i)}}/_{\bar{k}},\Ql)_{\phi}=\bigotimes_{i=1}^m H^1(C_{T_{\nu(i)}}/_{\bar{k}},\Ql)$. Putting together, we have

\[H^*(((C_T)^m)^{\kappa}/_{\bar{k}},\Ql)\ominus H^*((C_T)^m/_{\bar{k}},\Ql)=\sum_{\nu\in\{1,2\}^m}\bigotimes_{i=1}^m H^1(C_{T_{\nu(i)}}/_{\bar{k}},\Ql).\]\p

Now the LHS of (\ref{unitary}) is the $S_m$-invariant part of the LHS above, taking $S_m$-invariant on the RHS gives
\[H^*(\widetilde{\Sym}^m(C_T)/_{\bar{k}},\Ql)_{\kappa}=\sum_{d=0}^m\Sym^d H^1(C_{T_1}/_{\bar{k}},\Ql)\otimes\Sym^{m-d}H^1(C_{T_2}/_{\bar{k}},\Ql),\]\hp

where on the RHS, $d$ correspond to the number of $i$ with $\nu(i)=1$. Here (!) the $\Sym^dH^1$ above has the meaning of the $d$-th symmetric power of (virtual) representations as super (i.e. ($\Z/2\Z$)-graded) vector spaces; that is, $\Sym^dH^1=\bigwedge^dH^1$ in the usual notation. This proves (\ref{unitary}).\p

Combining (\ref{cover}) and (\ref{unitary}), we obtain

\begin{equation}\label{combine}
\frac{1}{\#J_T[2](k)}\sum_{\alpha\in H^1(k,J_T[2])}\kappa(\alpha)\#\widetilde{\Sym}^m(C_{T_{\alpha}})(k)=\bigoplus_{d=0}^m H^d(J_{T_1}/_{\bar{k}},\Ql)\otimes H^{m-d}(J_{T_2}/_{\bar{k}},\Ql).
\end{equation}\p

It is now a matter of combinatorics to prove (\ref{long}). First we have to rewrite $a_{m_1}(T)$ and $a_{m_2}(T)$. In the odd case, that is for $a_{m_1}(T)$, Lemma \ref{two}(iii) and Proposition \ref{inverse} together gives

\[a_{m_1}(T)=\sum_{0\le2\ell\le m_1}q^{\ell}\cdot C_{\ell}(-g_1+m_1-2\ell)\cdot \text{Tr}(\text{Frob}:H^{m_1-2\ell}(J_{T_1}/_{\bar{k}},\Ql)),\]\hp

where $g_1$ is the genus of $C_{T_1}$; $n_1=2g_1+1$. For the even case, the number $a_m(T)$ is like $a_m(T)-(q+1)a_{m-1}(T)$ if using the definition of the odd case. This gives

\[a_{m_2}(T)=\sum_{0\le2\ell\le m_2}q^{\ell}\cdot C_{\ell}(-g_1+m_2-2\ell)\cdot\text{Tr}(\text{Frob}:H^{m_2-2\ell}(J_{T_2}/_{\bar{k}},\Ql))\]
\mhp
\[-(q+1)\sum_{0<2\ell+1\le m_2}q^{\ell}\cdot C_{\ell}(-g_1+m_2-2\ell-1)\cdot\text{Tr}(\text{Frob}:H^{m_2-2\ell-1}(J_{T_2}/_{\bar{k}},\Ql)).\]\p

Having the expressions of $a_{m_1}(T)$ and $a_{m_2}(T)$ at hand, one sees that (\ref{long}) follows from (\ref{combine}) and Corollary \ref{multiCata}. This finishes the proof of Proposition \ref{propendo}.\end{proof}\p

With the endoscopic transfer formula we had right before Proposition \ref{propendo}, it now suffices to show that when we run over all possible choices of coprime separable polynomials $p_{T_1}(x),p_{T_2}(x)\in k[x]$ of degree $n_1$ and $n_2$, respectively, we have
\[\sum_{m_1+m_2=m}\sum_{\til{N}_{m_1}\in\CO_{m_1}^1(0)}\sum_{\til{N}_{m_2}\in\CO_{m_2}^2(0)}\gamma(\til{N}_{m_1})a_{m_1}(T_1)a_{m_2}(T_2)\cdot J(\til{N}_{m_1},\cdot)\otimes J(\til{N}_{m_2},\cdot)\]

spans the linear space of stable distributions supported on the union of all $\til{N}_{m_1}\times\til{N}_{m_2}$ with $m_1+m_2=m$.\p

In other words we have to prove the vectors $(a_{m_1}(T_1)a_{m_2}(T_2))_{m_1+m_2=m}$ for different $T_1,T_2$ span $\Q^{\{(m_1,m_2)\,|\,m_1+m_2=m,\;0\le 2m_1<n_1,\;0\le 2m_2\le n_2\}}$. That this is always the case can be proved with a parity trick similar to Lemma \ref{parity}. This finishes the proof of Theorem \ref{endo}.\end{proof}\p

\begin{remark}\label{rmkconj} In fact, it was endoscopic transfer which led us into conjecturing the results in Theorem \ref{main} and Theorem \ref{even} before knowing how to compute them. The point is that without having a good method to compute Shalika germs, Section \ref{secgeom} already tells us that the stable Shalika germs $\Gamma_{\til{N}_m}(\til{T})$ should be expressed in terms of linear combinations of $\#\Sym^{m'}(C_T)(k)$, $0\le m'\le m$. Together with Assem's conjectures, this suggests that something like (\ref{long}), with some a priori unknown coefficients, should be true.\p

On the other hand, $\til{N}_m$ only exists as a nilpotent orbit with codimension $2m$ in the regular nilpotent orbit if $2m\le n$. In other words, this suggests that the stable Shalika germ formula (which we proved to be $a_m(T)$), should be something that vanishes when $2m>n$. This together with some weaker computation was what led us to the formula $\Gamma_{\til{N}_m}^{st}(\til{T})=a_m(T)$.
\end{remark}\p

\hp
\section{Local character expansions of supercuspidal representations}\label{charsc}

This section is devoted to the application of our Shalika germ formulas to local character expansion of specific supercuspidal representations. Briefly speaking, we use our result on supercuspidal representations whose local character looks like the Fourier transform of $J(\til{T},\cdot)$ to obtain a Harish-Chandra-Howe local character expansion, and invoke the interpretation of character expansion by M{\oe}glin-Waldspurger \cite{MW}. For the use of the result in \cite{MW}, we assume $\text{char}(F)=0$ in this section.\p

Again fix $T\in\Lg(1)^{rs}(k)$. Recall $\Lg(1)(k)\cong\tLg(F)_{\bx,-1/2}/\tLg(F)_{\bx,0}\cong\tLg(F)_{\bx,1/2}/\tLg(F)_{\bx,1}$. Furthermore we have that $\Lg(1)(k)$ is self-dual, allowing us to identify $\tLg(F)_{\bx,-1/2}/\tLg(F)_{\bx,0}$ and $\tLg(F)_{\bx,1/2}/\tLg(F)_{\bx,1}$ as the dual of each other. With a choice of non-trivial additive character $\psi:(k,+)\ra\mathbb{C}^{\times}$, the elment $T$ then give rises to a character on $\tLg(F)_{\bx,1/2}/\tLg(F)_{\bx,1}\cong\tG(F)_{\bx,1/2}/\tG(F)_{\bx,1}$, and thus a $1$-dimensional representation of $\tG(F)_{\bx,1/2}$. We denote by $\psi_T$ this representation.\p

The compact induction

\mhp
\[\pi_T:=\text{c-ind}_{\tG(F)_{\bx,1/2}}^{\tG(F)}\psi_T=\{f\in C_c^{\infty}(\tG(F))\;|\;f(g_1g_2)=\psi_T(g_1)f(g_2),\;\forall g_1\in\tG(F)_{\bx,1/2}\}.\]\qp

can be shown to be the direct sum of finitely many supercuspidal representations. Let $r$ be the number of irreducible factor of the characteristic polynomial $p_T(x)$ of $T$ and $L=k[x]/p_T(x)$ be an \'{e}tale algebra over $k$; $L$ is the direct product of $r$ finite extensions of $k$. We have $\Stab_{O(V)}(T)=\text{Res}_k^L\mu_2$ has $2^r$ points defined over $k$. Then $\pi_T$ is the direct sum of $2^r$ distinct irreducible supercuspidal representations \cite[Prop. 2.4]{RY} of depth $\frac{1}{2}$. These are examples of {\it epipelagic representations} of Reeder and Yu \cite{RY}.\p

Now let $\Phi_{\pi_T}$ be the character of $\pi_T$. In other words, $\Phi_{\pi_T}\in C_c^{\infty}(\tLg(F))$ is the ($\tG(F)$-conjugation) invariant distribution such that for any $f\in C_c^{\infty}(\tG(F))$, $\Phi_{\pi_T}(f):=\text{Tr}(\pi_T(f))$. Here to define $\pi_T(f)$ we need a choice of measure on $\tG(F)$, which we give in Appendix \ref{norm}.\p

The basic philosophy that goes back to at least Harish-Chandra is that characters should be compared with Fourier transforms of orbital integrals. Use as in Appendix \ref{norm} the self-dual structure $\psi(B(\cdot,\cdot))$ and measure on $\tLg(F)$. This gives, for $f\in C_c^{\infty}(\tLg(F))$, its Fourier transform
\[\widehat{f}(\til{X}):=\int_{\tLg(F)}\psi(B(\til{X},\til{Y}))d\til{Y}.\]\p

We define $\widehat{J}(\til{X},f):=J(\til{X},\widehat{f})$, the Fourier transform of orbital integrals. Fix a lift $\til{T}\in\tLg(F)_{\bx,1/2}$. What one has is that\p

\begin{lemma}\label{scchar}(i) $\Theta_{\pi_T}$ is supported on $\tG(F)_{\bx,1/2}$.\hp

(ii) Let $\se:\tLg(F)_{\bx,1/2}\xra{\sim}\tG(F)_{\bx,1/2}$ be a mock exponential map (see \cite[Hyp. 3.2.1]{De}, for us it can be given by the Cayley transform). Then for any $f\in C_c^{\infty}(\tLg(F)_{\bx,1/2})$,
\[\Theta_{\pi_T}(f\circ\se)=2^r\cdot\widehat{J}(\til{T},f).\]\hp

(iii) For each of the $2^r$ components of $\pi_T$, its character (which has larger support), when restricted to $\tG(F)_{\bx,1/2}$ and pulled back to $\tLg(F)_{\bx,1/2}$ via $\se$, is equal to $\widehat{J}(\til{T},f)$.
\end{lemma}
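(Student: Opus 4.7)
The plan is to derive all three statements from the explicit Frobenius-type character formula for representations compactly induced from an open compact subgroup. Set $K := \tG(F)_{\bx, 1/2}$, so $\pi_T = \text{c-ind}_K^{\tG(F)} \psi_T$. For a regular semisimple $g \in \tG(F)$, the character is
\[\Theta_{\pi_T}(g) = \sum_{xK \in \tG(F)/K,\ x^{-1}gx \in K} \psi_T(x^{-1}gx),\]
a sum which is finite by ellipticity of $g$ together with compactness of $K$. Part (i) is then immediate: if no $\tG(F)$-conjugate of $g$ lies in $K$, the indexing set is empty and $\Theta_{\pi_T}(g) = 0$, so by $\tG(F)$-invariance $\Theta_{\pi_T}$ is supported on the $\tG(F)$-conjugates of $K$, which is the intended sense of ``supported on $\tG(F)_{\bx, 1/2}$''.

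For part (ii), I would pair $\Theta_{\pi_T}$ with $f \circ \se$ and unfold using the formula above. After the standard change of variable $k = x^{-1}gx$ inside each integral, one obtains
\[\Theta_{\pi_T}(f \circ \se) = \sum_{xK \in \tG(F)/K} \int_{\tLg(F)_{\bx, 1/2}} f(\Ad(x)Y) \, \psi(B(\til{T}, Y)) \, dY,\]
using the standard fact that the mock exponential $\se$ intertwines $\Ad$ modulo the deeper filtration step $\tG(F)_{\bx, 1}$ on which $\psi_T$ is trivial, together with the definition $\psi_T(\se(Y)) = \psi(B(\til{T}, Y))$. A substitution $Z = \Ad(x) Y$ and the $\Ad$-invariance of $B$ convert the inner integral into $\widehat{f}(\Ad(x) \til{T})$, since $f$ is compactly supported. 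Hence
\[\Theta_{\pi_T}(f \circ \se) = \sum_{xK \in \tG(F)/K} \widehat{f}(\Ad(x)\til{T}).\]
To match this with $\widehat{J}(\til{T}, f) = \int_{\tG(F)/\Stab_{\tG}(\til{T})} \widehat{f}(\Ad(x)\til{T}) \, dx$, observe that the integrand depends on $x$ only through its class in $\tG(F)/\Stab_{\tG}(\til{T})$, and that $\Stab_{\tG}(\til{T})$, being anisotropic over $F$ by Lemma \ref{Galois}, is contained in $\tG(F)_{\bx}$. With the measure normalization of Appendix \ref{norm}, the passage from the discrete sum to the orbital integral produces the factor $[\Stab_{\tG}(\til{T}) \cap \tG(F)_\bx : \Stab_{\tG}(\til{T}) \cap K] = |\Stab_{O(V)}(T)(k)| = 2^r$.

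Part (iii) then follows from (ii) by Clifford theory. The $2^r$ irreducible constituents of $\pi_T$ are indexed by characters of the finite group $\Stab_{O(V)}(T)(k)$, which sits inside $\tG(F)_\bx \cap N_{\tG(F)}(\psi_T) / K$. Since these characters are trivial on $K$, each constituent restricts to the same scalar multiple of the character on $K$; summing gives $\Theta_{\pi_T}|_K$ equal to $2^r$ times that common restriction, and the asserted formula for each constituent follows. The main technical obstacle I anticipate is the sum-to-integral passage in (ii): $\widehat{f}$ is not compactly supported, so the convergence of $\sum_{xK} \widehat{f}(\Ad(x)\til{T})$ and its identification with $\widehat{J}(\til{T}, f)$ require care. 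The key inputs are the regular-semisimple-elliptic nature of $\til{T}$ (ensuring $\Ad(x)\til{T}$ exits the relevant Moy-Prasad stratum for $x$ outside a bounded set in $\tG(F)/\Stab_{\tG}(\til{T})$) together with standard estimates on $\widehat{f}$ away from $\tLg(F)_{\bx, 0}$, both of which are routine in the style of DeBacker-Reeder character computations for epipelagic supercuspidals.
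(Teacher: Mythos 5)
The paper itself states this lemma without proof (it is presented as a known fact in the style of epipelagic character computations), so I can only judge your argument on its own terms; your skeleton (Frobenius/Mackey formula for the compactly induced character, change of variable through the Cayley transform, comparison with the orbital integral) is certainly the intended one, but there is a genuine gap at the central step of (ii). When you unfold $\Theta_{\pi_T}(f\circ\se)$, the summand attached to a coset $x$ is not $\int_{\tLg(F)_{\bx,1/2}}f(\Ad(x)Y)\psi(B(\til{T},Y))\,dY$ over the full lattice: the condition $x^{-1}\se(Y)x\in\tG(F)_{\bx,1/2}$ (needed for $\psi_T$ to be evaluated, resp.\ for $f\circ\se$ to be nonzero after your change of variable) confines the integration to $\tLg(F)_{\bx,1/2}\cap\Ad(x)\tLg(F)_{\bx,1/2}$. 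Your substitution $Z=\Ad(x)Y$ therefore does \emph{not} produce $\widehat{f}(\Ad(x)\til{T})$; it produces the Fourier integral of $f$ cut off to $\Ad(x)\tLg(F)_{\bx,1/2}$, and the discrepancy is nonzero in general (for $f$ locally constant only with respect to a very small lattice one can exhibit cosets $x$ far from $\tG(F)_{\bx}$ with nonvanishing contribution). A symptom of the problem is that your intermediate expression $\sum_{xK}\widehat{f}(\Ad(x)\til{T})$ is not even well defined: the summand is not invariant under $x\mapsto xk$, $k\in K$, whereas the true summand is. Repairing this is exactly the content of the lemma: one must prove a vanishing/cancellation statement for the cosets with $x\notin\tG(F)_{\bx}$, and the natural input is the fact used elsewhere in the paper (proof of Lemma \ref{matrix}, citing \cite[Thm 2.1]{Ts}) that $\Ad(x)\til{T}\in\tLg(F)_{\bx,-1/2}$ if and only if $x\in\tG(F)_{\bx}$, combined with the measure bookkeeping ($|D(\til{T})|^{1/2}$, the measure of $\tG(F)_{\bx,1/2}$, and the image of $\Stab_{\tG}(\til{T})(F)$ in $\mathrm{O}_{n}(k)$ producing the $2^r$). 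None of this appears in your sketch, and the difficulty you do flag is misplaced: on a $p$-adic vector space $\widehat{f}$ of a compactly supported locally constant $f$ is again compactly supported, so convergence of the sum and of $J(\til{T},\widehat{f})$ is not the issue.

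Two smaller points. In (i), the Frobenius pointwise formula with its finiteness claim is only licensed on the regular elliptic set; for the support statement it is cleaner (and sufficient) to use the kernel/trace formula $\Theta_{\pi_T}(h)=\int_{K\backslash\tG(F)}\int_K\psi_T(k)h(g^{-1}kg)\,dk\,dg$, whose inner integral visibly vanishes when $\mathrm{supp}(h)$ misses all conjugates of $K$. In (iii), saying the constituents are indexed by characters trivial on $K$ does not by itself give that all constituents have the same character on $\tG(F)_{\bx,1/2}$: writing each constituent as compact induction from the stabilizer $H\supset K$ of $\psi_T$ in $\tG(F)_{\bx}$, you still need that elements of $H\setminus K$ have no conjugates in $\tG(F)_{\bx,1/2}$ (they have nontrivial absolutely semisimple part in their topological Jordan decomposition, while $\tG(F)_{\bx,1/2}$ is pro-$p$), so that only $u\in K$ contributes to the Frobenius formula for each constituent; that observation, plus the corrected computation in (ii), yields (iii).
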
\p

From now on let $\pi_T^o$ be any fixed component of $\pi_T$, and $\Phi_{\pi_T^o}$ its character. Let $\CO(0)$ be the set of nilpotent orbits. Then the Harish-Chandra-Howe local character expansion \cite[Thm. 4]{HC} states that there exists constants $(c_{\CO}(\pi_T^o))_{\CO\in\CO(0)}\in\mathbb{C}$ such that
\begin{equation}\label{HC}
\Phi_{\pi_T^o}(f\circ\se)=\sum_{\CO\in\CO(0)}c_{\CO}(\pi_T^o)\widehat{J}(\CO,f),
\end{equation}

for all $f$ that are supported in a sufficiently small neighborhood $U\subset\tLg(F)_{\bx,1/2}$ of $0\in\tLg(F)$. On the other hand, in \cite{MW} M{\oe}glin and Waldspurger proved that, if $\CO'$ is any nilpotent orbit satisfying that for any $\CO$ whose boundary contains $\CO'$ we have $c_{\CO}=0$, then $c_{\CO'}(\pi_T^o)$ is equal to the dimension of the degenerated Whittaker model associated to $\CO'$ of $\pi_T^o$.\p

Now, restricting to the small neighborhood $U$, we have by Lemma \ref{scchar}(iii) and (\ref{HC}) that
\[\widehat{J}(\til{T},f)=\sum_{\CO\in\CO(0)}c_{\CO}(\pi_T^o)\widehat{J}(\CO,f).\]

By inversing the Fourier transform, we see\p
 
\begin{corollary} We have $c_{\CO}(\pi_T^o)=\Gamma_{\CO}(\til{T})$, the latter are given by formulas in Theorem \ref{tilded} and \ref{tilded2}.
\end{corollary}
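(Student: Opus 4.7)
The plan is to match the Harish-Chandra-Howe local character expansion of $\Phi_{\pi_T^o}$ with an analogous expansion of $\widehat{J}(\til{T},\cdot)$ obtained from Shalika germs via Fourier duality, and then extract equality of coefficients from linear independence of Fourier transforms of nilpotent orbital integrals. First, Lemma \ref{scchar}(iii) combined with (\ref{HC}) shows that for every $f\in C_c^{\infty}(\tLg(F))$ supported in a sufficiently small neighborhood $U$ of $0\in\tLg(F)_{\bx,1/2}$,
\[\widehat{J}(\til{T},f)\,=\,\sum_{\CO\in\CO(0)}c_{\CO}(\pi_T^o)\,\widehat{J}(\CO,f).\]

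Next I would produce a parallel identity with the Shalika germs $\Gamma_{\CO}(\til{T})$ in place of the $c_{\CO}(\pi_T^o)$. The idea is to apply Shalika's theorem (\ref{Shalika}) not to $f$ itself but to its Fourier transform $\widehat{f}$. Since $f\in C_c^{\infty}(\tLg(F))$, so is $\widehat{f}$; and because $f$ is supported in the small neighborhood $U$, $\widehat{f}$ is automatically invariant under translation by a lattice dual to $U$, which by shrinking $U$ can be made as large as (\ref{Shalika}) requires. Applying (\ref{Shalika}) to $\widehat{f}$ and unpacking the definition $\widehat{J}(\cdot,f)=J(\cdot,\widehat{f})$ gives
\[\widehat{J}(\til{T},f)\,=\,\sum_{\CO\in\CO(0)}\Gamma_{\CO}(\til{T})\,\widehat{J}(\CO,f)\]
on the same class of $f$.

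Subtracting these two expansions yields
\[\sum_{\CO\in\CO(0)}\bigl(c_{\CO}(\pi_T^o)-\Gamma_{\CO}(\til{T})\bigr)\,\widehat{J}(\CO,f)\,=\,0\]
for every $f\in C_c^{\infty}$ supported in a small enough neighborhood of $0$. The main step, and essentially the only remaining obstacle, is a uniqueness statement: the distributions $\{\widehat{J}(\CO,\cdot)\}_{\CO\in\CO(0)}$ are linearly independent when tested against functions localized near $0$. This is a classical result of Harish-Chandra \cite{HC} — it is precisely what makes the coefficients $c_{\CO}(\pi_T^o)$ in the local character expansion well-defined in the first place, and the same independence applies to any other expansion of the same type. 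Once this uniqueness is invoked, equality of coefficients gives $c_{\CO}(\pi_T^o)=\Gamma_{\CO}(\til{T})$ for every $\CO\in\CO(0)$, and the explicit formulas from Theorems \ref{tilded} and \ref{tilded2} then complete the proof.
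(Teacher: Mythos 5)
Your proposal is correct and follows essentially the same route as the paper: combine Lemma \ref{scchar}(iii) with the local character expansion (\ref{HC}), pass through the Fourier transform so that Shalika's expansion (\ref{Shalika}) applies to $\widehat{f}$ (which is invariant under a large lattice dual to the small support of $f$), and match coefficients via linear independence of the $\widehat{J}(\CO,\cdot)$ near $0$. The paper compresses all of this into ``by inversing the Fourier transform,'' so you have simply made explicit the steps it leaves implicit.
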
\hp

\begin{corollary} For any $m\ge 0$ we can find $C=C(m,q)$ such that for any $n\ge C$, we can find supercuspidal representations of $U_n(E/F)$ of the form $\pi_T^o$ such that $c_{\CO}(\pi_T^o)=0$ for every nilpotent orbit $\CO$ of two Jordan blocks of sizes $n-m'$ and $m'$, $0\le m'\le m$. Here by abuse of language the $m'=0$ case corresponds to an orbit with a single Jordan block, namely a regular nilpotent orbits.
\end{corollary}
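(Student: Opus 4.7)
The plan is to combine the previous corollary, which identifies $c_{\CO}(\pi_T^o) = \Gamma_{\CO}(\til{T})$, with a counting argument over the $G(0)(k)$-orbits inside a single stable orbit of $\til{T}$. It then suffices to produce $\til{T}$ such that $\Gamma_{\til{N}_{m'}}(\til{T}) = 0$ for every nilpotent orbit $\til{N}_{m'}$ of Jordan partition $(n-m', m')$ with $0 \le m' \le m$, the case $m' = 0$ being the regular orbit. Even the $m' = 0$ case is achievable: Theorem \ref{tilded} with $m = 0$ reads $\Gamma_{\til{N}_0}(\til{T}) = \#\widetilde{\Sym}^0(C_T)(k) \cdot C_0(-g+1)/\#J_T[2](k)$, and $\#\widetilde{\Sym}^0(C_T)(k) = \#\alpha_T(k) = 0$ precisely when the $J_T[2]$-torsor $\alpha_T = (\times 2)^{-1}(\infty)$ is non-trivial, which happens for $|J_T[2](k)| - 1$ of the $|J_T[2](k)|$ orbits of $\til{T}$ inside its stable orbit. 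Although there is a unique regular orbit in $\tLg(F)$ in the odd case, its Shalika germ at $\til{T}$ depends on the orbit of $\til{T}$, consistent with $\Gamma^{st}_{\til{N}_0}(\til{T}) = a_0(T) = 1$.

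More generally, Theorems \ref{tilded} and \ref{tilded2} express the vector $(\Gamma_{\til{N}_{m'}}(\til{T}))_{m' = 0}^{g}$ as the image of $(\#\widetilde{\Sym}^j(C_T)(k))_{j = 0}^{g}$ under the lower triangular invertible matrix $A^{-1} B^{(4)}$ from the proof of Theorem \ref{tilded}, with analogous formulas using $C_T'$ (and, in the even case, the starred variants $\widetilde{\Sym}^{j,*}$ tied to $\beta_T$) for the remaining orbits inside each stable nilpotent orbit. Hence $\Gamma_{\til{N}_{m'}}(\til{T}) = 0$ for every $0 \le m' \le m$ is equivalent to the simultaneous vanishing of $\#\widetilde{\Sym}^j(C_T)(k)$ and of its $C_T'$- and starred-analogues for all $0 \le j \le m$.

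To produce such a $\til{T}$, I would choose a separable $p_T(x) \in k[x]$ of degree $n$ with $r$ distinct irreducible factors, giving $|J_T[2](k)| \ge 2^{r-1}$. The $G(0)(k)$-orbits in the stable orbit of $T$ are parameterized by $H^1(k, J_T[2])$ of the same cardinality, and twisting $T$ by $\alpha \in H^1(k, J_T[2])$ shifts the $J_T[2]$-torsor class of $\widetilde{\Sym}^j(C_{T_\alpha}) \to \Sym^j(C_T)$ uniformly by $\alpha$. Consequently, for each $D \in \Sym^j(C_T)(k)$ the fiber over $D$ is a trivial torsor for exactly one $\alpha$, so the set of bad $\alpha$ ranging over $D \in \bigsqcup_{j=0}^{m}\Sym^j(C_T)(k)$ (and the analogous divisors on $C_T'$, in the even case also for the starred covers) has cardinality bounded by $O((qn)^m)$ via Hasse--Weil. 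Taking $p_T$, e.g., as a product of many distinct irreducible quadratic factors (with a linear factor appended when $n$ is odd) yields $r \ge n/3$, so that $2^{r-1}$ dominates this polynomial bound whenever $n \ge C(m, q)$ with $C(m, q) = O(m \log(mq))$, and a good $\alpha$ then exists. Taking $\pi_T^o$ to be any irreducible component of $\pi_{T_\alpha}$ completes the argument.

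The main obstacle will be justifying the torsor-shift claim: I must verify that twisting $T$ by $\alpha$ truly translates the $J_T[2]$-torsor class of the fiber of $\widetilde{\Sym}^j(C_{T_\alpha})$ above each $k$-rational divisor $D$ by the same $\alpha$, independently of $D$. This should follow from the description of $\widetilde{\Sym}^j(C_T)$ as the unique $J_T[2]$-cover obtained by prescribing its fiber above $m(\infty)$ to be $\alpha_T = (\times 2)^{-1}(\infty)$, combined with the fact that $\alpha_T$ itself varies by $\alpha$ as the $G(0)(k)$-orbit of $T$ varies inside its stable orbit; but the equivariance requires careful tracking. A secondary concern is the simultaneous balancing of $|J_T[2](k)|$ exponentially large in $n$ against $\#\Sym^j(C_T)(k)$ polynomially bounded for $j \le m$. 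The quadratic-factor construction targets this balance, but some bookkeeping is needed in the even case to confirm that the auxiliary counts involving $\beta_T$ and $\alpha_T \times^{J_T[2]} \beta_T$ admit the same type of bound.
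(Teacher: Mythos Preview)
Your approach is essentially the same as the paper's: choose $T$ with many irreducible factors so that $\#J_T[2](k)$ grows exponentially in $n$, bound $\sum_{j\le m}\#\Sym^j(C_T)(k)$ polynomially via Weil, and then pick a class $\alpha\in H^1(k,J_T[2])$ avoiding the finitely many ``bad'' torsor classes. Your torsor-shift claim is exactly what the paper uses (and states as: when $T$ runs over orbits in its stable orbit, the fiber above any fixed rational point runs over all $J_T[2]$-torsors), so your instinct that this is the key point to justify is correct but not a divergence from the paper.

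There is one genuine gap in your treatment of even $n$. You assert that vanishing of all the relevant $\#\widetilde{\Sym}^j$ counts is \emph{equivalent} to $\Gamma_{\til{N}_{m'}}(\til{T})=0$ for each orbit $\til{N}_{m'}$. But Theorem~\ref{tilded2} does not compute individual Shalika germs in the even case: it only gives the sum $\Gamma_{\til{N}_{m'}}(\til{T})+\Gamma_{\til{N}_{m'}}(\ad(u)\til{T})$ over the two $\tG(F)$-orbits inside a $\tG^{ad}(F)$-orbit. So your argument, as written, yields only that this \emph{sum} vanishes. The paper closes this gap by an inductive appeal to M\oe glin--Waldspurger: once the coefficients for smaller $m'$ are known to vanish, each $c_{\CO}(\pi_T^o)$ with $\CO$ of type $(n-m',m')$ is the dimension of a degenerate Whittaker model, hence non-negative, so the vanishing of the sum forces each term to be zero. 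You should add this step for even $n$.
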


\begin{proof} Take $T$ so that $p_T(x)$ has as many irreducible factors as possible, so that $\#J_T[2](k)=2^r$ with $r>\frac{n}{1+\log_qn}$. The varieties in Theorem \ref{tilded} and \ref{tilded2} are $J_T[2]$-covers of $\Sym^{m'}(C_T)$ (and $\Sym^{m'}(C_T')$, etc), whose numbers of points can be bounded by the Weil bound on the Frobenius trace. Now for any $J_T[2]$-cover of $\Sym^{m'}(C_T)$, the fiber above a rational point in $\Sym^{m'}(C_T)$ is a $J_T[2]$-torsor. Recall that the orbits in the stable orbit of $T$ are classified by $H^1(k,J_T[2])$, and when $T$ runs over all such orbits in the same stable orbit, the fiber above any chosen rational point in $\Sym^{m'}(C_T)$ will also run over all possible $J_T[2]$-torsors.\p

Once $n$ is large enough, we have $r$ large enough so that $2^r$ will be much greater than $\sum_{m'=0}^m\#\Sym^{m'}(C_T)(k)$ (and more for other covers and covers of $\Sym^{m'}(C_T')$, etc). We can thus find an orbit in the stable orbit of $T$, i.e. a class in $H^1(k,J_T[2])$, such that for the corresponding covers $\widetilde{\Sym}^{m'}(C_T)$ in Theorem \ref{tilded} and \ref{tilded2}, $0\le m'\le m$, the torsor above each rational point is non-trivial. That is to say $\widetilde{\Sym}^{m'}(C_T)$ (and similarly $\widetilde{\Sym}^{m'}(C_T'), \widetilde{\Sym}^{m',*}(C_T)$, etc) has no rational points. When $n$ is odd this says $c_{\CO}(\pi_T^o)=\Gamma_{\CO}(\til{T})=0$, which is what we want. When $n$ is even we have instead $c_{\CO}(\pi_T^o)+c_{\Ad(u)\CO}(\pi_T^o)=\Gamma_{\CO}(\til{T})+\Gamma_{\Ad(u)\CO}(\til{T})=0$. Since inductively by \cite{MW} we have $c_{\CO}(\pi_T^o),c_{\Ad(u)\CO}(\pi_T^o)\ge 0$, we conclude that they all vanish.
\end{proof}\hp

Note a nilpotent with two Jordan blocks is never in the closure of a nilpotent orbit with more than two Jordan blocks. One can thus have many examples where the dimension of the degenerate Whittaker models are (up to constant) number of rational points on varieties in Theorem \ref{tilded} and \ref{tilded2}.\p

\begin{example}
For example, take $n=2g+1$ odd and take $p_T(x)\in k[x]$ any polynomial of degree $2g+1$ that is the product of $r$ distinct irreducible factors with $r>1$. Let $C_T=(y^2=p_T(x))$ (the smooth completion). Take an \'{e}tale Galois $J_T[2]$-cover $\til{C}_T$ of $C_T$ for which the fiber above $\infty\in C_T$ is a non-trivial $J_T[2]$-torsor. Such a choice corresponds to an orbit of such $T$ in its stable orbit. The corresponding representation has $c_{\til{N}_0}(\pi_T^o)=\frac{1}{\#J_T[2](k)}\#\widetilde{\Sym}^0(C_T)(k)=0$ and $c_{\til{N}_1}(\pi_T^o)=\frac{1}{\#J_T[2](k)}\#\til{C}_T(k)$, i.e. the dimension of the degenerate Whittaker model for the subregular orbit $\til{N}_1$ is $2^{-(r-1)}$ times the number of rational points on $\til{C}_T$, a curve of genus $2^{2g}(g-1)+1$ over $k$. It will be interesting to see how these points actually ``live'' on the degenerate Whittaker model.
\end{example}

\hp\appendix
\hp
\section{Normalization of measures}\label{norm}

This appendix is for the normalization of semisimple and nilpotent orbital integrals on our $p$-adic group $\tG$. Our normalization essentially follows that of \cite{MW}.\p

For $\til{X}\in\tLg(F)$ regular semisimple, our $J(\til{X},\cdot)$ is what is usually written $|D(\til{X})|^{1/2}\mu_{\til{X}}(\cdot)$. More precisely, let $D(\til{X}):=\det(\ad(\til{X})|_{\tLg/\tLg_{\til{X}}})$, where $\tLg_{\til{X}}$ denotes the centralizer of $\til{X}$. The norm $|\cdot|$ on $F$ is such that $|\pi|=q^{-1}$. We define
\[J(\til{X},f):=|D(\til{X})|^{1/2}\int_{\tG(F)/\tG_{\til{X}}(F)}f(\Ad(g)\til{X}).\]\hp

And the normalization of measures goes as follows. Fix an additive character $\psi:F\ra\mathbb{C}^{\times}$ such that $\psi$ is trivial on $\pi_F$ but not on $\CO_F$. Let $B(\cdot,\cdot):\tLg\times\tLg\ra\mathbb{G}_a$ be an $F$-Killing form on $\tLg$. In fact in the article we'll take $B(\cdot,\cdot)$ to be the naive trace form on the space of anti-hermitian spaces, which has the property that for any point $x'$ on the building and $d\in\mathbb{R}$, $\psi(B(\cdot,\cdot))$ identifies $\tLg(F)_{\bx',d:d+}$ as the dual of $\tLg(F)_{\bx',-d:(-d)+}$.\p

The Haar measure on $\tLg(F)$ is taken to be the one that is self-dual by $\psi\circ B$, and the Haar measure on $\tG(F)$ to be the one so that the (mock) exponential map is measure preserving near the identity. $\tLg_{\til{X}}\subset\tLg$ is a subspace on which $B(\cdot,\cdot)$ is non-degenerate, and the Haar measure on $\tLg_{\til{X}}(F)$ and $\tG_{\til{X}}(F)$ is defined in the same way by restricting $B(\cdot,\cdot)$ to $\tLg_{\til{X}}\times\tLg_{\til{X}}$. This defines the required Haar measure in the above regular semisimple orbital integral.\p

Lastly, the normalization of nilpotent orbital integrals goes as follows. We assume in this article that $\text{char}(F)=0$ or $\text{char}(F)>n$. This implies that any nilpotent orbit $\CO\subset\tLg$ is smooth with expected tangent space; for $N\in\CO$, we have $T_N{\CO}\cong\tLg/\tLg_N$. Now $\tLg/\tLg_N$ has a symplectic structure $B_N:(\til{X},\til{Y})\mapsto B([\til{X},\til{Y}],N)$.\p

We take the measure on $\CO$ to be given by the top wedge power of this symplectic form. More precisely, this measure has the following interpretation. Take a Lagrangian $F$-subspace $L\subset\tLg(F)/\tLg_N(F)$ and $\Lambda_L\subset L$ any lattice. Let $L'$ be any $F$-complement of $L$ and $\Lambda_L'=\{\til{X}\in L'\,|\,\psi(B_N(\til{X},\til{Y}))=1,\;\forall \til{Y}\in\Lambda_L\}$ be the dual lattice. Then $\Lambda_L+\Lambda_L'$ is assigned to have measure $1$.\p

\hp
\section{Catalan numbers}

This appendix discusses combinatorics that appear in analyzing Shalika germs and their endoscopic transfer consequence. We omit the proofs, which are fairly elementary.\p

\begin{definition}\label{Cata}For any integer $\ell\ge 0$, we define degree $\ell$ polynomials $C_{\ell}(x)\in\Q[x]$ by
\[C_{\ell}(x)=\frac{x}{(x+2\ell)\cdot\ell!}\prod_{i=1}^{\ell}(x+\ell+i).\]
\end{definition}\p

\begin{remark}$C_{\ell}(0)=0$ except for $C_{0}(x)\equiv 1$. Also $C_{\ell}(1)$ is the classical Catalan numbers $1,1,2,5,14,...$. See e.g. Wikipedia.
\end{remark}\p

\begin{proposition}\label{recur} For any integer $\ell>0$, $C_{\ell}(x+1)-C_{\ell}(x)=C_{\ell-1}(x+2)$.
\end{proposition}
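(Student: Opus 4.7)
The identity is purely an elementary polynomial identity, so the plan is simply to massage both sides into a common form. The only mild obstacle is bookkeeping for small $\ell$ (especially $\ell=1$, where one of the products becomes empty), so I would treat that case separately or adopt conventions that absorb it.

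First I would simplify the definition by cancelling the factor $(x+2\ell)$ that appears in the denominator against the $i=\ell$ term of the product, obtaining
\[
C_\ell(x) \;=\; \frac{x\,(x+\ell+1)(x+\ell+2)\cdots(x+2\ell-1)}{\ell!}
\]
for $\ell\ge 1$. In particular $C_1(x)=x$, which makes the $\ell=1$ case of the proposition trivial (both sides equal $1$), so from now on assume $\ell\ge 2$.

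Next I would write out $C_\ell(x+1)$ and $C_\ell(x)$ using the simplified formula and pull out the common polynomial factor. The factor $(x+\ell+2)(x+\ell+3)\cdots(x+2\ell-1)$ (with $\ell-2$ terms) appears in both, leaving
\[
C_\ell(x+1)-C_\ell(x) \;=\; \frac{(x+\ell+2)(x+\ell+3)\cdots(x+2\ell-1)}{\ell!}\cdot\bigl[(x+1)(x+2\ell)-x(x+\ell+1)\bigr].
\]
A direct expansion gives $(x+1)(x+2\ell)-x(x+\ell+1)=\ell(x+2)$. Plugging this in and absorbing the factor $\ell$ into $\ell!=\ell\cdot(\ell-1)!$ yields
\[
C_\ell(x+1)-C_\ell(x) \;=\; \frac{(x+2)(x+\ell+2)(x+\ell+3)\cdots(x+2\ell-1)}{(\ell-1)!}.
\]

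Finally I would recognize the right-hand side as $C_{\ell-1}(x+2)$: indeed, by the simplified formula with $\ell$ replaced by $\ell-1$ and $x$ by $x+2$,
\[
C_{\ell-1}(x+2) \;=\; \frac{(x+2)\,(x+\ell+2)(x+\ell+3)\cdots(x+2\ell-1)}{(\ell-1)!},
\]
which matches. This completes the proof.
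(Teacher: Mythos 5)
Your computation is correct: cancelling the $(x+2\ell)$ factor, extracting the common product, and reducing the bracket to $\ell(x+2)$ verifies the identity, with the $\ell=1$ (and implicitly $\ell=2$, empty product) edge cases handled properly. The paper omits the proof of this proposition as "fairly elementary," and your direct verification is exactly the kind of argument intended, so there is nothing to compare beyond noting that your proof correctly fills the omission.
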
\p

The following observation was shown to me by Joel B. Lewis.\hp

\begin{proposition}Let $C(x,q):=\sum_{\ell=0}^{\infty}C_{\ell}(x)q^{\ell}$, we have
\[C(x,q)=\left(\frac{1-\sqrt{1-4q}}{2q}\right)^x.\]
\end{proposition}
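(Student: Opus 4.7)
The plan is to prove the identity via Lagrange-Bürmann inversion, using the functional equation satisfied by $C(x,q)$ when $x=1$ (the classical Catalan generating function). Writing $C_{cat}(q) := \frac{1-\sqrt{1-4q}}{2q}$, one checks directly that $C_{cat}(q) = 1 + q\,C_{cat}(q)^2$, equivalently $q\,C_{cat}^2 - C_{cat} + 1 = 0$. We want to show that $C(x,q) = C_{cat}(q)^x$ as a formal power series in $q$ with coefficients in $\Q[x]$, which reduces to extracting $[q^\ell]C_{cat}(q)^x$.

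Next, introduce the new variable $w := C_{cat}(q) - 1$, so that $w(0) = 0$ and the functional equation becomes $w = q(1+w)^2$. This is precisely the setup for Lagrange-Bürmann inversion: for any formal series $g(w)$, one has
\[
[q^n]\,g(w) \;=\; \frac{1}{n}\,[w^{n-1}]\bigl(g'(w)\,(1+w)^{2n}\bigr), \qquad n \ge 1.
\]
Applying this with $g(w) = (1+w)^x = C_{cat}(q)^x$ gives $g'(w) = x(1+w)^{x-1}$, so
\[
[q^n]\,C_{cat}(q)^x \;=\; \frac{x}{n}\,[w^{n-1}]\,(1+w)^{x+2n-1} \;=\; \frac{x}{n}\binom{x+2n-1}{n-1},
\]
where $\binom{x+2n-1}{n-1}$ denotes the polynomial $\frac{1}{(n-1)!}\prod_{i=1}^{n-1}(x+n+i)$ in $x$. (For $n=0$ the identity reduces to the trivial statement $[q^0]C_{cat}(q)^x = 1 = C_0(x)$.)

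It then remains to match this with $C_\ell(x)$: writing $n=\ell$,
\[
\frac{x}{\ell}\binom{x+2\ell-1}{\ell-1} \;=\; \frac{x}{\ell\,(\ell-1)!}\prod_{i=1}^{\ell-1}(x+\ell+i) \;=\; \frac{x}{\ell!}\prod_{i=1}^{\ell-1}(x+\ell+i),
\]
while by Definition \ref{Cata},
\[
C_\ell(x) \;=\; \frac{x}{(x+2\ell)\,\ell!}\prod_{i=1}^{\ell}(x+\ell+i) \;=\; \frac{x}{\ell!}\prod_{i=1}^{\ell-1}(x+\ell+i),
\]
so the two expressions coincide. Summing over $\ell$ yields $C(x,q) = C_{cat}(q)^x$, which is the claim.

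The only subtlety — and what I expect to be the main bookkeeping obstacle — is to justify that Lagrange-Bürmann, which is usually stated for numerical $x$, holds identically in $x$ when interpreted coefficient-by-coefficient in $q$. This is immediate because both sides of the identity $[q^n](1+w)^x = \frac{x}{n}\binom{x+2n-1}{n-1}$ are polynomials in $x$ of degree $\le n$ that agree for all positive integers $x$ (where the binomial expansion is classical), so they agree as polynomials. Everything else is a direct computation.
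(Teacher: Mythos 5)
Your proof is correct. Note that the paper itself offers no argument to compare against: the appendix explicitly omits proofs of these combinatorial facts, so your write-up fills a genuine gap rather than diverging from an existing one. The Lagrange--B\"urmann route you take is the standard clean proof and all the steps check out: $C_{cat}=1+qC_{cat}^2$ gives $w=q(1+w)^2$ for $w=C_{cat}-1$; inversion with $g(w)=(1+w)^x$ yields $[q^\ell]C_{cat}^x=\frac{x}{\ell}\binom{x+2\ell-1}{\ell-1}=\frac{x}{\ell!}\prod_{i=1}^{\ell-1}(x+\ell+i)$, which indeed equals $C_\ell(x)$ after cancelling the factor $x+2\ell$ in Definition \ref{Cata} (these are the classical ballot numbers $\frac{x}{x+2\ell}\binom{x+2\ell}{\ell}$). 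Your treatment of the indeterminate $x$ is also sound: $[q^\ell](1+w(q))^x=\sum_{k\le \ell}\binom{x}{k}[q^\ell]w^k$ is a polynomial in $x$ of degree at most $\ell$, as is the right-hand side, and agreement at all positive integers (where Lagrange inversion applies to the polynomial $g$) forces agreement identically; alternatively one can note that Lagrange inversion holds verbatim over the coefficient ring $\Q[x]$ with $(1+w)^x:=\sum_k\binom{x}{k}w^k$, so even this interpolation step could be dispensed with. Either way, the argument is complete, and as a bonus it makes Corollary \ref{multiCata} immediate from $(1+w)^{x+y}=(1+w)^x(1+w)^y$.
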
\p

\begin{corollary}\label{multiCata}
We have $C(x+y,q)=C(x,q)C(y,q)$. Equivalently $C_{\ell}(x+y)=C_{\ell}(x)C_0(y)+C_{\ell-1}(x)C_1(y)+...+C_0(x)C_{\ell}(y).$
\end{corollary}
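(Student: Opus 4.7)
The plan is to deduce the corollary directly from the generating function formula $C(x,q)=\left(\frac{1-\sqrt{1-4q}}{2q}\right)^x$ established in the preceding proposition. Set $a(q):=\frac{1-\sqrt{1-4q}}{2q}$, viewed as a formal power series in $q$ (with $a(0)=1$ so that $a(q)^x$ makes sense as a formal power series with polynomial-in-$x$ coefficients for each fixed $x$, and in fact for $x$ an indeterminate). Then $C(x,q)=a(q)^x$, and the multiplicativity of the exponential in the exponent gives
\[
C(x+y,q)=a(q)^{x+y}=a(q)^x\cdot a(q)^y=C(x,q)\,C(y,q).
\]
This is the first (generating-function) form of the corollary.

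To extract the combinatorial form, expand both sides as power series in $q$ with coefficients in $\Q[x,y]$. The left-hand side has coefficient $C_\ell(x+y)$ in front of $q^\ell$, while the right-hand side, by the Cauchy product formula for formal power series, has coefficient
\[
\sum_{i=0}^{\ell} C_i(x)\,C_{\ell-i}(y)
\]
in front of $q^\ell$. Equating coefficients of $q^\ell$ on the two sides yields the stated identity
\[
C_\ell(x+y)=\sum_{i=0}^{\ell}C_i(x)\,C_{\ell-i}(y).
\]

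The only point that needs a brief justification is that $a(q)^x$ makes sense as a formal power series whose $q^\ell$-coefficient is polynomial in $x$, so that the identity $a(q)^{x+y}=a(q)^xa(q)^y$ holds as an identity in $\Q[x,y][[q]]$ (not merely numerically at integer values). Writing $a(q)=1+qb(q)$ with $b(q)\in\Q[[q]]$, we have $a(q)^x=\exp(x\log(1+qb(q)))$, and expanding the exponential shows that the coefficient of $q^\ell$ is a polynomial in $x$ of degree at most $\ell$; the usual formal-power-series identity $\exp(xL)\exp(yL)=\exp((x+y)L)$ with $L=\log(1+qb(q))$ then gives the multiplicativity in the exponent. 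I do not anticipate any obstacle here: the only content beyond the preceding proposition is this formal bookkeeping, which is routine.
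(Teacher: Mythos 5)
Your argument is correct and is exactly the intended deduction: the paper states this as a corollary of the generating-function formula $C(x,q)=\left(\frac{1-\sqrt{1-4q}}{2q}\right)^x$ and omits the (elementary) proof, which is precisely your multiplicativity-in-the-exponent plus comparison of $q^\ell$-coefficients. Your extra remark that the $q^\ell$-coefficient of $a(q)^x$ is polynomial in $x$ (so the identity holds in $\Q[x,y][[q]]$, not just at integer values) is the right bookkeeping and closes the only potential gap.
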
\p

\begin{proposition}\label{inverse}Let $A=(A_{ij})_{i,j\in\Z_{\ge 0}}$ be the lower triangular matrix with entries in $\Q[x,q]$ with
\[A_{ij}=\left\{\begin{matrix}
q^{\ell}\binom{x-j}{\ell}&\text{if }i=j+2\ell,\;\ell\in\Z_{\ge0}.\\
\\
0&\text{otherwise}.
\end{matrix}\right.\]\hp

Then the inverse of $A$ is given by
\[(A^{-1})_{ij}=\left\{\begin{matrix}
q^{\ell}C_{\ell}(-x+j)&\text{if }i=j+2\ell,\;\ell\in\Z_{\ge0}.\\
\\
0&\text{otherwise}.
\end{matrix}\right.\]
\end{proposition}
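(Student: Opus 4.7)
The plan is as follows. Both $A$ and the proposed inverse $A^{-1}$ are lower triangular with nonzero entries only on the ``even'' off-diagonals (i.e.\ $i - j$ even), so their product is also of this form. Writing $i - j = 2L$ and summing over the intermediate index $k = j + 2m$ with $0 \le m \le L$, the product $(AA^{-1})_{ij}$ becomes
\[
q^L \sum_{m=0}^{L} \binom{x - j - 2m}{L-m}\, C_m(-x+j).
\]
Setting $y := -x + j$, it therefore suffices to prove the binomial identity
\[
\sum_{m=0}^{L} \binom{-y-2m}{L-m}\, C_m(y) \;=\; \delta_{L,0} \qquad (L \ge 0).
\]
The case $L = 0$ is immediate, since $\binom{-y}{0} C_0(y) = 1$.

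For $L \ge 1$, I would prove this identity by summing against $s^L$ and using the generating function for $C_m$ given in the preceding proposition. Form
\[
G(s) \;:=\; \sum_{L \ge 0} s^L \sum_{m=0}^{L} \binom{-y-2m}{L-m}\, C_m(y).
\]
Interchanging the order of summation and using $\sum_{k \ge 0} \binom{-y-2m}{k} s^k = (1+s)^{-y-2m}$ gives
\[
G(s) \;=\; (1+s)^{-y} \sum_{m \ge 0} C_m(y) \left(\frac{s}{(1+s)^2}\right)^m \;=\; (1+s)^{-y}\, C\!\left(y,\; \frac{s}{(1+s)^2}\right).
\]
By the closed form $C(y,q) = u(q)^y$ with $u = (1 - \sqrt{1-4q})/(2q)$, and the fact that $u(q)$ is the unique formal power series solution of $u = 1 + q u^2$, one checks directly that setting $u = 1 + s$ yields $q = (u-1)/u^2 = s/(1+s)^2$. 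Hence $u(s/(1+s)^2) = 1+s$, so $C(y, s/(1+s)^2) = (1+s)^y$, and therefore $G(s) = 1$, which is exactly the desired identity.

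The only step that requires any real care is identifying $1+s$ as the correct branch of the solution to the Catalan fixed-point equation $u = 1 + qu^2$ under the substitution $q = s/(1+s)^2$; this is just a matter of verifying that both sides agree as formal power series in $s$ (evaluating at $s=0$ picks out the branch $u(0)=1$, matching $1+s|_{s=0}$). Everything else -- the lower triangularity, the reindexing, and the Newton binomial expansion -- is formal manipulation.
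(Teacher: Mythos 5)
Your proof is correct. Note that the paper offers no proof to compare against: the appendix explicitly omits the proofs of these combinatorial statements as ``fairly elementary,'' so your argument fills that gap rather than diverging from a written one. Your reduction is sound: since $A$ and the candidate inverse $B$ are lower unitriangular with support on even off-diagonals, checking $(AB)_{ij}=\delta_{ij}$ reduces (with $i-j=2L$, $y=-x+j$) to the identity $\sum_{m=0}^{L}\binom{-y-2m}{L-m}C_m(y)=\delta_{L,0}$, and for unitriangular matrices a one-sided identity $AB=I$ does suffice to conclude $B=A^{-1}$. The generating-function step is the right use of the paper's closed form $C(x,q)=\bigl((1-\sqrt{1-4q})/(2q)\bigr)^x$: interchanging sums, applying the formal Newton expansion $\sum_k\binom{-y-2m}{k}s^k=(1+s)^{-y-2m}$, and substituting $q=s/(1+s)^2$ into the fixed-point equation $u=1+qu^2$ (with the branch pinned by $u(0)=1$, as you note) gives $C\bigl(y,s/(1+s)^2\bigr)=(1+s)^y$ and hence $G(s)=1$; all of this is legitimate in $\Q[y][[s]]$ since the coefficients of $u(q)^y$ are polynomials in $y$ and the substitution has zero constant term. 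One could alternatively prove the same identity by induction on $L$ using the paper's recursion $C_\ell(x+1)-C_\ell(x)=C_{\ell-1}(x+2)$ together with Pascal's rule, which avoids generating functions entirely, but your route is arguably cleaner and also re-derives the multiplicativity underlying Corollary~\ref{multiCata}.
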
\p

We add another vaguely related proposition, which is used in the end of subsection \ref{secnil}.\p

\begin{proposition}\label{path}Let $0\le m'\le m\le g$. Let $\Xi_{m,m'}\subset S_m$ be the subset of bijections of $\{1,...,m\}$ that satisfies an equivalent of Condition \ref{cond}:
$\sigma(i)=i$ for $i=1,...,m'$, and if either $j\le m$ and $j-i=2$, or $j>m$ and $j-i=1$, then we have $\sigma(j)>\sigma(i)$.\hp

Write $\delta_3(\sigma)=\#\{1\le i<g\,|\,\sigma(i)>\sigma(i+1)\}$. Then $\delta_3(\sigma)\le\floor{\frac{m-m'}{2}}$ for $\sigma\in\Xi_{m,m'}$ and for $0\le r\le\lfloor\frac{m-m'}{2}\rfloor$,
\[\#\{\sigma\in\Xi_{m,m'}\,|\,\delta_3(\sigma)\le r\}=\binom{g-m'}{r}.\]
\end{proposition}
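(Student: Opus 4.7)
First I reduce to the case $m'=0$. Every $\sigma\in\Xi_{m,m'}$ fixes $\{1,\ldots,m'\}$ pointwise and permutes $\{m'+1,\ldots,g\}$ among themselves, so $\sigma(m')=m'<\sigma(m'+1)$ automatically, yielding no descent among the first $m'$ positions. Restricting $\sigma$ to $\{m'+1,\ldots,g\}$ and relabeling positions gives a bijection $\Xi_{m,m'}\xrightarrow{\sim}\Xi_{m-m',0}^{(g-m')}$ that preserves $\delta_3$, where the superscript records the size of the underlying set. It therefore suffices to show
\[\#\{\sigma\in\Xi_{m,0}^{(g)}:\delta_3(\sigma)\le r\}=\binom{g}{r},\qquad 0\le r\le\lfloor m/2\rfloor.\]

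Set $s=\lfloor m/2\rfloor$. The defining inequalities of $\Xi_{m,0}^{(g)}$ exhibit $\sigma$ as the merge of two strictly increasing sequences: a \emph{short} sequence of length $s$ at positions $\{1,3,\ldots,2s-1\}$ if $m$ is even or $\{2,4,\ldots,2s\}$ if $m$ is odd, and a \emph{long} sequence of length $g-s$ at the remaining positions. Consequently $\sigma$ is uniquely determined by the subset $A\subset\{1,\ldots,g\}$ of size $s$ of its values at the short-sequence positions, so $|\Xi_{m,0}^{(g)}|=\binom{g}{s}$; moreover every descent of $\sigma$ sits at a position strictly less than $m$.

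The core of the proof is induction on $g$, splitting on whether $g\in A$. If $g\notin A$, then $g$ is the maximum of the long sequence, forcing $\sigma(g)=g$; deleting position $g$ yields $\sigma'\in\Xi_{m,0}^{(g-1)}$ (or $\Xi_{m-1,0}^{(g-1)}$ in the boundary case $m=g$), with $\delta_3(\sigma')=\delta_3(\sigma)$. If $g\in A$, then $g=\max A$ occupies the last short-sequence position, which equals $m-1$ in both the even and odd cases; the adjacent value $\sigma(m)$ is then strictly smaller than $g$, so a descent is forced at position $m-1$. Deleting this position identifies $\sigma$ with a unique $\sigma'\in\Xi_{m-2,0}^{(g-1)}$ satisfying $\delta_3(\sigma)=\delta_3(\sigma')+1$, the inverse operation being insertion of the value $g$ at position $m-1$. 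Each bijection is checked by matching the defining inequalities; Pascal's identity $\binom{g-1}{r}+\binom{g-1}{r-1}=\binom{g}{r}$ then closes the induction, and the bound $\delta_3(\sigma)\le\lfloor m/2\rfloor$ follows in parallel since only the ``$g\in A$'' branch adds a descent and simultaneously decreases $m$ by $2$.

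The main obstacle will be the constraint bookkeeping in the ``$g\in A$'' case: after the index shift, the old tail inequality $\sigma(m)<\sigma(m+1)$ must be shown to reincarnate as the unique ``gap-$2$'' inequality of $\Xi_{m-2,0}^{(g-1)}$ that links the new short and long sequences at the seam, while the old short-sequence inequality $\sigma(m-3)<\sigma(m-1)=g$ becomes vacuous and drops out. Matching all such constraints precisely, together with verifying the minor boundary cases $m=g$ and $m=g-1$, is mechanical but requires care.
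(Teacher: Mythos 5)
The paper gives you nothing to compare against here: Appendix B explicitly omits the proofs of its combinatorial statements, so your argument has to stand on its own, and it does. Your reading of the statement as being about permutations of $\{1,\ldots,g\}$ (the ``$S_m$'' in the proposition is a slip; cf.\ the definition of $\Xi_{m,m'}\subset S_g$ in subsection \ref{secnil}) is the right one, the reduction to $m'=0$ is clean, the ``merge of two increasing sequences'' description of $\Xi_{m,0}^{(g)}$ is correct and gives $|\Xi_{m,0}^{(g)}|=\binom{g}{s}$ with all descents at positions $<m$, and both bijections check out: when $g\notin A$ one has $\sigma(g)=g$ and deletion of the last position preserves all constraints and descents; when $g\in A$ one has $\sigma(m-1)=g$, a forced descent there, and deleting that position matches constraints exactly (the old gap-$2$ inequality at $j=m$ becomes the new gap-$1$ inequality at $j=m-1$, the old gap-$1$ inequality at $j=m+1$ becomes the new one at $j=m$, and $\sigma(m-3)<g$ drops out — note this is slightly different from the attribution in your last paragraph, but the bookkeeping does close up), with insertion of $g$ at position $m-1$ as inverse. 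The one point you should make explicit: Pascal's identity closes the induction only for $r<\lfloor m/2\rfloor$. In the boundary case $m=g$ with $g$ even, the $g\notin A$ branch lands in $\Xi_{g-1,0}^{(g-1)}$, whose inductive statement covers only $r\le\lfloor(g-1)/2\rfloor=g/2-1$, so the case $r=\lfloor m/2\rfloor$ is not literally covered by the recursion; handle the top value $r=\lfloor m/2\rfloor$ separately using the two facts you already established, namely $\#\Xi_{m,0}^{(g)}=\binom{g}{\lfloor m/2\rfloor}$ and $\delta_3\le\lfloor m/2\rfloor$ (or observe $\binom{g-1}{g/2-1}=\binom{g-1}{g/2}$). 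With that remark added, the proof is complete.
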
\p

\hp
\section{Restrictions on characteristic of local and residue fields}\label{char}

In this appendix we explain what restrictions are necessary, and why some others can be relaxed. Recall $F$ is the local field and $k$ its residue field. The restriction we have for the results in this paper is $\text{char}(k)\not=2$ and either $\text{char}(F)=0$ or $\text{char}(F)>n$, where $\tG=U_n(E/F)$ (except for subsection \ref{subendo} and Section \ref{charsc}, in which we furthermore require $\text{char}(F)=0$). Our main reference here is \cite[Appendix A]{Ts}. To begin with, if $\text{char}(F)\not=0$, then for well-definedness of orbital integrals, finiteness of nilpotent orbits and the validity of the theorem of Shalika (\ref{Shalika}), we need $\text{char}(F)>n$ and \cite[III.4.14]{SS}. However as our $\tG/_E\cong\text{GL}_n$, we can check that \cite[III.4.14]{SS} is valid as long as $\text{char}(F)>n$.\p

Now we discuss the assumption on $\text{char}(k)$. The restriction $\text{char}(k)\not=2$ is used everywhere; we don't bother to deal with quadrics over $\bar{\mathbb{F}}_2$ and wildly ramified group, etc. The only place that we need to assume more is Hypothesis \ref{homo} where we use DeBacker's homogeneity result, whose assumption on $\text{char}(k)$ we don't know how to avoid. However one can do the following: once we establish the result in Section \ref{seccompute} in the case $\text{char}(k)\gg0$, we can compare the result with the method in \cite{Ts}. Roughly speaking, the method in \cite{Ts} computes Shalika germs in terms of the same varieties in Section \ref{geom}, but with (in general) uncontrollable combinatorics.\p

Let's take Theorem \ref{tilded} as an example. The method in \cite{Ts} will compute $\Gamma_{\til{N}_m}(\til{T})$ also in terms of $\#\widetilde{\Sym}^{m'}(C_T)(k)$, $m'\le m$, but with unknown coefficients $P_{m,m'}(q,g)\in\Q(q)[g]$ that are polynomial in the genus $g$ and rational in $q$, independent of the choice of $F$, $k$ and $n=2g+1$. Given that we already know Theorem \ref{tilded} for $\text{char}(k)\gg0$, we know the method in \cite{Ts} must give us the same result.\p

This reduces the restriction on $\text{char}(k)$ to only the restrictions that we need in \cite{Ts}, which assumes $\text{char}(k)\not=2$ because we have a $\Z/2$-grading on $G$, and assumes $(\text{char}(k),n)=1$ for \cite[Claim 2.4]{Ts}. However, what is actually needed for the latter is an self-dual structure on $\Lg=\mathfrak{gl}_n/_k$, which we do have regardless of $\text{char}(k)$. In fact, even if $\tG=SU_n(E/F)$ and $\Lg=\mathfrak{sl}_n$ we are still good, as one can work with $\Lg^*=\mathfrak{pgl}_n$ for the need of \cite[Claim 2.4]{Ts}. In any case, we can drop the assumption $(\text{char}(k),n)=1$.\p

There is also \cite[Hypothesis 3.1]{Ts} which is only known to be true for general groups assuming $\text{char}(k)$ large. However in our case \cite[Hypothesis 3.1]{Ts} is exactly verified by the bijection between nilpotent orbits in $\tLg(F)$ and nilpotent orbits in $\Lg(1)(k)$ described in the beginning of Section \ref{seccompute}. In conclusion, we can work with any $\text{char}(k)$ odd.\p

\let\oldthebibliography=\thebibliography
\let\endoldthebibliography=\endthebibliography
\renewenvironment{thebibliography}[1]{
  \begin{oldthebibliography}{#1}
    \setlength{\itemsep}{2pt}
  }{
    \end{oldthebibliography}
  }
\bibliographystyle{acm}
\bibliography{biblio}

\begin{thebibliography}{10}

\bibitem{Assem}
{\sc Assem, M.}
\newblock On stability and endoscopic transfer of unipotent orbital integrals
  on {$p$}-adic symplectic groups.
\newblock {\em Mem. Amer. Math. Soc. 134}, 635 (1998), x+101.

\bibitem{BG}
{\sc {Bhargava}, M., and {Gross}, B.~H.}
\newblock {Arithmetic invariant theory}.
\newblock {\em ArXiv e-prints\/} (June 2012).
\newblock \url{http://arxiv.org/abs/1206.4774}.

\bibitem{BG2}
{\sc Bhargava, M., and Gross, B.~H.}
\newblock The average size of the 2-{S}elmer group of {J}acobians of
  hyperelliptic curves having a rational {W}eierstrass point.
\newblock In {\em Automorphic representations and {$L$}-functions}, vol.~22 of
  {\em Tata Inst. Fundam. Res. Stud. Math.} Tata Inst. Fund. Res., Mumbai,
  2013, pp.~23--91.

\bibitem{De}
{\sc Debacker, S.}
\newblock Homogeneity results for invariant distributions of a reductive
  {$p$}-adic group.
\newblock {\em Ann. Sci. \'Ecole Norm. Sup. (4) 35}, 3 (2002), 391--422.

\bibitem{GH}
{\sc {Gordon}, J., and {Hales}, T.}
\newblock {Endoscopic transfer of orbital integrals in large residual
  characteristic}.
\newblock {\em ArXiv e-prints\/} (Feb. 2015).
\newblock \url{http://arxiv.org/abs/1502.07368}.

\bibitem{GKM}
{\sc Goresky, M., Kottwitz, R., and MacPherson, R.}
\newblock Purity of equivalued affine {S}pringer fibers.
\newblock {\em Represent. Theory 10\/} (2006), 130--146 (electronic).

\bibitem{Hales}
{\sc Hales, T.~C.}
\newblock Hyperelliptic curves and harmonic analysis (why harmonic analysis on
  reductive {$p$}-adic groups is not elementary).
\newblock In {\em Representation theory and analysis on homogeneous spaces
  ({N}ew {B}runswick, {NJ}, 1993)}, vol.~177 of {\em Contemp. Math.} Amer.
  Math. Soc., Providence, RI, 1994, pp.~137--169.

\bibitem{HC}
{\sc Harish-Chandra}.
\newblock {\em Admissible invariant distributions on reductive {$p$}-adic
  groups}, vol.~16 of {\em University Lecture Series}.
\newblock American Mathematical Society, Providence, RI, 1999.
\newblock Preface and notes by Stephen DeBacker and Paul J. Sally, Jr.

\bibitem{LS}
{\sc Langlands, R.~P., and Shelstad, D.}
\newblock On the definition of transfer factors.
\newblock {\em Math. Ann. 278}, 1-4 (1987), 219--271.

\bibitem{Levy}
{\sc Levy, P.}
\newblock Vinberg's {$\theta$}-groups in positive characteristic and
  {K}ostant-{W}eierstrass slices.
\newblock {\em Transform. Groups 14}, 2 (2009), 417--461.

\bibitem{LuS}
{\sc Lusztig, G., and Spaltenstein, N.}
\newblock Induced unipotent classes.
\newblock {\em J. London Math. Soc. (2) 19}, 1 (1979), 41--52.

\bibitem{MW}
{\sc M{\oe}glin, C., and Waldspurger, J.-L.}
\newblock Mod\`eles de {W}hittaker d\'eg\'en\'er\'es pour des groupes
  {$p$}-adiques.
\newblock {\em Math. Z. 196}, 3 (1987), 427--452.

\bibitem{Ngo}
{\sc Ng{\^o}, B.~C.}
\newblock Le lemme fondamental pour les alg\`ebres de {L}ie.
\newblock {\em Publ. Math. Inst. Hautes \'Etudes Sci.}, 111 (2010), 1--169.

\bibitem{RR}
{\sc Ranga~Rao, R.}
\newblock Orbital integrals in reductive groups.
\newblock {\em Ann. of Math. (2) 96\/} (1972), 505--510.

\bibitem{RY}
{\sc Reeder, M., and Yu, J.-K.}
\newblock Epipelagic representations and invariant theory.
\newblock {\em J. Amer. Math. Soc. 27}, 2 (2014), 437--477.

\bibitem{Sh}
{\sc Shalika, J.~A.}
\newblock A theorem on semi-simple {$p$}-adic groups.
\newblock {\em Ann. of Math. (2) 95\/} (1972), 226--242.

\bibitem{SS}
{\sc Springer, T.~A., and Steinberg, R.}
\newblock Conjugacy classes.
\newblock In {\em Seminar on {A}lgebraic {G}roups and {R}elated {F}inite
  {G}roups ({T}he {I}nstitute for {A}dvanced {S}tudy, {P}rinceton, {N}.{J}.,
  1968/69)}, Lecture Notes in Mathematics, Vol. 131. Springer, Berlin, 1970,
  pp.~167--266.

\bibitem{Th}
{\sc Thorne, J.~A.}
\newblock Vinberg's representations and arithmetic invariant theory.
\newblock {\em Algebra Number Theory 7}, 9 (2013), 2331--2368.

\bibitem{Ts}
{\sc Tsai, C.-C.}
\newblock Computations of orbital integrals and {S}halika germs.
\newblock \url{http://math.harvard.edu/~cctsai/Shalika%20germ.pdf}.

\bibitem{Wald2}
{\sc Waldspurger, J.-L.}
\newblock Le lemme fondamental implique le transfert.
\newblock {\em Compositio Math. 105}, 2 (1997), 153--236.

\bibitem{Wald}
{\sc Waldspurger, J.-L.}
\newblock Int\'egrales orbitales nilpotentes et endoscopie pour les groupes
  classiques non ramifi\'es.
\newblock {\em Ast\'erisque}, 269 (2001), vi+449.

\bibitem{Jerry}
{\sc {Wang}, X.}
\newblock {Maximal linear spaces contained in the base loci of pencils of
  quadrics}.
\newblock {\em ArXiv e-prints\/} (Feb. 2013).
\newblock \url{http://arxiv.org/abs/1302.2385}.

\end{thebibliography}

\end{document}